\documentclass[reqno,11pt]{amsart}

\usepackage{hyperref}
\usepackage{a4wide,amsthm,amssymb,mathrsfs,setspace,pstricks,booktabs,mathtools,amsmath,amsfonts}
\usepackage{graphicx}
\usepackage{xcolor}
\usepackage{array}
\newtheorem{theorem}{Theorem}[section]
\newtheorem{lemma}[theorem]{Lemma}

\newtheorem{corollary}[theorem]{Corollary}

\theoremstyle{definition}

\newtheorem{theoremA}{Theorem}

\newtheorem{theoremB}{Theorem}

\theoremstyle{remark}
\newtheorem{remark}[theorem]{Remark}

\numberwithin{equation}{section}

\newcommand{\R}{\mathbb{R}}
\newcommand{\C}{\mathbb{C}}

\begin{document}
	
	\title [Curvature equations with two singular sources on rectangular torus]{Sharp criteria for the existence of solutions to curvature equations with two singular sources on rectangular torus}
	
	\author{Zhijie Chen}
	\address{Department of Mathematical Sciences, Yau Mathematical Sciences Center, Tsinghua University, Beijing, 100084, China}
	\email{zjchen2016@tsinghua.edu.cn}

	\author{Zhen Song}    
	\address{Department of Mathematics, Northeastern University, Shenyang 110004, China}
	\email{songzhen@mail.neu.edu.cn}

	\subjclass[2020]{Primary 35J75; Secondary 33E05; 34M35; 35J08.}

	\keywords{Curvature equation; Rectangular torus; Generalized Lam\'e equation.}
	
	\begin{abstract}
		We study the following curvature equation with two singular sources on a flat torus
		\begin{equation*}
			\Delta u+e^u=8\pi(\delta_p+\delta_{-p})\quad\text{on }E_\tau\coloneqq\C/(\mathbb{Z}+\mathbb{Z}\tau),
		\end{equation*}
	where $\delta_p$ denotes the Dirac measure at $p$. The case $\wp'(2p)=0$ was solved by Kuo (J. Differential Geom. 2026).
In this paper, we study the general case $\wp'(2p)\neq 0$. A novel phenomenon for $\wp'(2p)\neq 0$ is that the Hill discriminant of the associated generalized Lam\'{e} equation has an essential singularity.   We develop new ideas to determine the intersection of two conditional stability sets completely, which is quite surprising since it seems impossible to determine the exact structure of these two sets due to the essential singularity. This leads to a sharp criteria for the existence and non-existence of solutions when $E_\tau$ is a rectangular torus and $\wp(p)\in \R$.
	\end{abstract}
	
	\maketitle
	
	\section{Introduction}
	
	Let $\tau\in\mathbb{H}\coloneqq\{\tau\in \mathbb{C}:\operatorname{Im}\tau>0\}$, $\Lambda_\tau\coloneqq\mathbb{Z}+\mathbb{Z}\tau$, and denote
	\begin{equation}
		\omega_0=0,\quad\omega_1=1,\quad\omega_2=\tau,\quad\omega_3=1+\tau.
	\end{equation}
	Let $E_\tau\coloneqq\mathbb{C}/\Lambda_\tau$ be a flat torus in the plane and $E_\tau[2]\coloneqq\{\frac{\omega_k}{2}:k=0,1,2,3\}+\Lambda_\tau$ be the set consisting of the lattice points and half periods in $E_\tau$. 
	Let $\wp(z)=\wp(z;\tau)$ be the Weierstrass $\wp$-function with periods $\Lambda_\tau$, defined by
	\begin{equation}
		\wp(z)\coloneqq\frac{1}{z^2}+\sum_{\omega\in\Lambda_\tau\setminus\{0\}}\left(\frac{1}{(z-\omega)^2}-\frac{1}{\omega^2}\right),
	\end{equation}
	which satisfies the well-known cubic equation
	\begin{equation}\label{eq:cubic}
		\wp'(z)^{2}=4\wp(z)^{3}-g_{2}\wp(z)-g_{3}=4\prod_{k=1}^3(\wp(z)-e_k),
	\end{equation}
	where $g_{2}=g_2(\tau)$ and $g_{3}=g_3(\tau)$ are known as invariants of the elliptic curve $E_\tau$, and $e_k=e_k(\tau)\coloneqq\wp(\frac{\omega_k}{2})$ for $k=1,2,3$.   Define the Weierstrass zeta function
	\begin{equation}
		\zeta(z)=\zeta(z;\tau)\coloneqq-\int^z\wp(\xi;\tau)\,d\xi.
	\end{equation}
It is well-known that $\zeta(z)$ is an odd meromorphic function with simple poles at the lattice $\Lambda_\tau$, and has two quasi-periods $\eta_k=\eta_k(\tau)$, $k=1,2$, defined by
\begin{equation}
	\zeta(z+\omega_1)=\zeta(z)+\eta_1\quad\text{and}\quad\zeta(z+\omega_2)=\zeta(z)+\eta_2.
\end{equation}
	
	Consider the following curvature equation
	\begin{equation}\label{main problem general case}
		\Delta u+e^u=4\pi n(\delta_p+\delta_{-p})\quad\text{on }E_\tau,
	\end{equation}
	where $n\in\mathbb{N}_+$, $p\in E_\tau$ and $\delta_p$ denotes the Dirac measure at $p$. Equation \eqref{main problem general case} is also known as a singular Liouville equation, and arises from both conformal geometry and mathematical physics. Geometrically, a solution $u$ of \eqref{main problem general case} leads to a spherical metric $ds^2=\frac{1}{2}e^u|dz|^2$ with constant Gaussian curvature $+1$ acquiring conic singularities at $\pm p$. In statistical physics, \eqref{main problem general case} is a special case of the general mean field equation that appears as the equation for the mean field limit of the Euler flow in Onsager's vortex model.  We refer the readers to \cite{BartolucciJevnikarLeeYang2019,BartolucciTarantello2002,	BartolucciYangZhang2026,CM,ChaiLinWang2015,ChenLin2003,DKM,FengSongXu2026,
FuLin2026,GuiMoradifam2018,Kuo2026,LinWang2010,LinYan2013,LuoTian1992,MalchiodiRuiz2011,
WeiZhang2026,WeiWuXu2026} and the references therein for introductions to mean field equations and related topics, as well as for recent developments in these areas.
	
	From the view point of PDEs, since the a priori estimate can not hold for \eqref{main problem general case} (see \cite{BartolucciTarantello2002}), many powerful PDE approaches, such as variational methods, topological degree methods and so on, seems not work for \eqref{main problem general case}, which makes the study of  the existence  of solutions for \eqref{main problem general case} quite challenging.

Starting from the seminal work by Lin and Wang \cite{LinWang2010},	
the case $p\in E_\tau[2]$, i.e., $p=-p$ in $E_{\tau}$, has been widely studied in \cite{ChaiLinWang2015, ChenFuLin2021, ChenLin2020, Eremenko, EG, EMP, Lin2026, LinWang2010, LinWangJEP}.
In this case,  up to a translation,  equation \eqref{main problem general case} becomes
	\begin{equation}\label{alpha=1,p half period}
		\Delta u+e^u=8\pi n\delta_{0}\quad\text{on }E_\tau.
	\end{equation}
	It turns out that whether solutions of \eqref{alpha=1,p half period} exist or not depends essentially on the choice of $\tau$, a novel phenomenon first discovered by Lin and Wang \cite{LinWang2010}, where they studied the ``simplest'' case $n=1$. 
	Let $G(z)=G(z;\tau)$ be the Green function on $E_\tau$ defined by
	\begin{equation}
		-\Delta G(z)=\delta_0-\frac{1}{|E_\tau|},\quad\int_{E_\tau}G(z)\,dxdy=0,
	\end{equation}
	where $|E_\tau|$ is the area of the torus $E_\tau$. It is an even function with the only singularity at $0$, so the three half-period points $\frac{\omega_k}{2}$, $k=1,2,3$, are always critical points of $G(z)$, and other critical points (if exist) must appear in pairs.	
Lin and Wang \cite{LinWang2010} proved that the Green function $G(z)$ has either three or five critical points, and \eqref{alpha=1,p half period} with $n=1$ has solutions if and only if $G(z)$ has five critical points.
For example, when $\tau\in i\R_{>0}$, i.e., $E_\tau$ is a rectangular torus, $G(z)$ has only three critical points, or equivalently, \eqref{alpha=1,p half period} with $n=1$ has no solutions. When $\tau=\frac{1}{2}+ib$, i.e., $E_\tau$ is a rhombus torus, there exist $0<b_0<\frac{1}{2}<b_1$ such that $G(z)$ has five critical points, or equivalently,  \eqref{alpha=1,p half period} with $n=1$ has solutions, if and only if $b\in (0,b_0)\cup(b_1,+\infty)$. For general $n\geq 2$, Chai, Lin and Wang \cite{ChaiLinWang2015, LinWangJEP} studied \eqref{alpha=1,p half period} by the method of algebraic geometry, and established the deep connection with the classical Lam\'{e} equation and modular forms. In particular, they proved that the following statement holds for \eqref{alpha=1,p half period}:
\begin{equation} \label{eq: data}
  \parbox{\dimexpr\linewidth-5em}{
  Once there is a solution, then there exists a one-parameter scaling family of solutions $u_{\beta}(z)$, where $\beta>0$ is arbitrary. Furthermore, $u_\beta(z)$ is even (i.e., $u_{\beta}(z)=u_\beta(-z)$) if and only if $\beta=1$.
  }
\end{equation}
Later, Chen and Lin \cite{ChenLin2020} proved that when $\tau\in i\R_{>0}$, i.e., $E_\tau$ is a rectangular torus, \eqref{alpha=1,p half period} has no solutions for any $n\in\mathbb N_+$.

For the general case $p\in E_\tau\backslash E_\tau[2]$, i.e., $p\neq -p$ in $E_{\tau}$, equation \eqref{main problem general case} has two singular sources and is more difficult to handle, and as far as we know, there are only several works \cite{ChenFuLin2025, ChenFuLin2026, ChenFuLinSong2026, Kuo2026} studying it. 
Remark that, by following the approach of \cite{ChaiLinWang2015}, it is easy to prove that the statement \eqref{eq: data} also holds for \eqref{main problem general case}; see e.g. Section \ref{section 3} for a detailed proof for $n=2$.

More precisely, inspired by \cite{LinWang2010},
the series of papers \cite{ChenFuLin2025, ChenFuLin2026, ChenFuLinSong2026} studied  \eqref{main problem general case} with $n=1$, i.e.,
	\begin{equation}\label{alpha=1,p not half period}
		\Delta u+e^u=4\pi(\delta_p+\delta_{-p})\quad\text{on }E_\tau.
	\end{equation}	
By establishing the deep connection between \eqref{alpha=1,p not half period}, the sum of two Green functions
	\begin{equation*}
		G_p(z)\coloneqq\frac12(G(z+p)+G(z-p)),
	\end{equation*}
	and Hitchin's formula \cite{Hitchin1995} arising from his study of Einstein metric, 
it was proved in \cite{ChenFuLin2025} that \eqref{alpha=1,p not half period} has at most three one-parameter scaling families of solutions, and every number in $\{0, 1, 2, 3\}$ really occurs for different choices of $(\tau, p)$'s.

However, given a specific torus and $p\in E_\tau\backslash E_\tau[2]$, it seems very difficult to determine explicitly the number of solution families for \eqref{alpha=1,p not half period} in general. It is well known that $\wp(\cdot): E_{\tau}\to \mathbb{C}\cup\{\infty\}$ is a double cover with branch points at $\{\frac{\omega_k}{2}\}_{k=0}^3$, i.e., for any $c\in \mathbb{C}\cup\{\infty\}$, there is a unique pair $\pm z_c\in E_{\tau}$ such that $\wp(\pm z_{c})=c$. Thus $$\text{there is a one-to-one correspondence between $\pm p\in E_{\tau}$ and $\wp(p)\in\mathbb C\cup\{\infty\}.$}$$ 
The subsequent paper \cite{ChenFuLin2026} studied the important case $\tau=ib$ with $b>0$, i.e., $E_\tau$ is a rectangular torus, and gave a complete answer for $\wp(p)\in\R$ as follows.
	\begin{theoremA}[\cite{ChenFuLin2026}]\label{theorem c}
		\emph{Let $\tau=ib$ with $b>0$. Then there are $8$ real values, denoted by $d_1<\cdots<d_8$, such that the following statements hold.
		\begin{itemize}
			\item  Equation \eqref{alpha=1,p not half period} has no solutions for
			$$
			\wp(p)\in(-\infty,d_1]\cup[d_2,d_3]\cup[d_4,d_5]\cup[d_6,d_7]\cup[d_8,+\infty).
			$$
			\item Equation \eqref{alpha=1,p not half period} has a unique one-parameter scaling family of solutions for
			$$
			\wp(p)\in(d_1,d_2)\cup(d_3,d_4)\cup(d_5,d_6)\cup(d_7,d_8).
			$$
		\end{itemize}}
	\end{theoremA}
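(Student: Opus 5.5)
Theorem A concerns the case $n=1$ on a rectangular torus $\tau=ib$. We plan to prove it through the standard correspondence between solutions of \eqref{alpha=1,p not half period} and unitary monodromy of a Fuchsian ODE. The steps are as follows.

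\textbf{Step 1 (reduction to an ODE).} By the Liouville theorem, any solution can be written as $u=\log\frac{8|f'|^2}{(1+|f|^2)^2}$ with a developing map $f$. The Schwarzian derivative $\{f,z\}$ is then an elliptic function with double poles at $\pm p$. It follows that $f=y_1/y_2$ for two independent solutions $y_1,y_2$ of a generalized Lam\'e equation
\begin{equation*}
y''=\Big[\tfrac34\big(\wp(z+p)+\wp(z-p)\big)+A\big(\zeta(z+p)-\zeta(z-p)\big)+B\Big]y .
\end{equation*}
The condition that $\pm p$ are apparent singularities (local exponents $-\tfrac12,\tfrac32$, no logarithms) determines $A$ algebraically in terms of $B$ and $\wp(p)$. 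Solutions of \eqref{alpha=1,p not half period} then correspond to parameters $B$ for which the monodromy group of the ODE is unitarizable. Using the statement \eqref{eq: data}, the solutions come in scaling families. Each family corresponds to a value $B$ for which the monodromy is generated by two diagonal matrices
\begin{equation*}
\mathrm{diag}\big(e^{2\pi i s},e^{-2\pi i s}\big),\qquad \mathrm{diag}\big(e^{-2\pi i r},e^{2\pi i r}\big),
\end{equation*}
with $(r,s)\in\R^2\setminus\tfrac12\mathbb Z^2$. Equivalently, there is a Hermite--Halphen type ansatz $y=e^{cz}\,\sigma(z-a)/\sqrt{\sigma(z-p)\sigma(z+p)}$ with $\wp(a)$ determined by $B$.

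\textbf{Step 2 (Hitchin-type formula and Green function).} Following \cite{ChenFuLin2025}, the existence of such a family is equivalent to $a$ being a critical point of $G_p$ that is not in $E_\tau[2]$. We will use the identity
\begin{equation*}
\zeta(a+p)+\zeta(a-p)-r\eta_1-s\eta_2=0
\end{equation*}
(up to normalization), together with $\nabla G_p(a)=0$. The number of solution families therefore equals the number of pairs $\pm a$ of nontrivial critical points, which is at most three.

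\textbf{Step 3 (rectangular torus, $\wp(p)$ real).} When $\tau=ib$, the lattice is invariant under complex conjugation, and $\wp(p)\in\R$ means that $p$ lies on one of the real or imaginary half-period lines. We then use the symmetries $G(\bar z)=G(z)=G(-z)$. These force the critical points to lie on the symmetry lines or to come in conjugate pairs, and a parity count then shows that the number of nontrivial pairs is $0$ or $1$. To find the transitions, we parametrize $\wp(p)=t$ along $\R$ and track the degenerate critical points of $G_p$. At such a transition a nontrivial critical point collides with a half period $\omega_k/2$, which gives the condition
\begin{equation*}
\det D^2G_p\!\left(\tfrac{\omega_k}2\right)=0 .
\end{equation*}
This condition is explicit in terms of $\wp(p)$, $e_k$ and $\eta_j$. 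We expect each $k$ to contribute two real roots, for $8$ values $d_1<\dots<d_8$ in total. A monotonicity or sign-change argument in $t$ then shows that the count flips at each $d_j$. The asymptotics as $t\to\pm\infty$ (where $p\to0$) and comparison with the no-solution result of Lin--Wang for rectangular tori give $0$ solution families at the two ends. This produces the alternating pattern in the statement.

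\textbf{Main obstacle.} The hardest step is proving that each Hessian-degeneracy equation has exactly two real roots in $t$, with the roots interlaced as claimed, and excluding any additional degenerate critical points away from the half periods. Our first attempt is to express the Hessian determinant at $\omega_k/2$ as a quadratic polynomial in $\wp(p)$ whose coefficients involve $e_k$ and $\eta_1$. We would then use the rectangular-torus inequalities $e_2<e_3<e_1$ and $\eta_1>0$ to control the discriminant, and the ordering of the resulting roots relative to the $e_k$.
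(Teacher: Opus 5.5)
Your Steps 1--2 are an acceptable reduction, citable from \cite{ChenFuLin2025}. There is one slip: for $n=1$ the resonance at index $2$ of the exponents $-\frac12,\frac32$ forces $B=A^2-A\zeta(2p)-\frac34\wp(2p)$. So $B$ is determined by $A$, not conversely, and $A$ is the natural spectral parameter.

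The genuine gap is in Step 3, where ``a parity count'' is supposed to show that the number of nontrivial pairs is $0$ or $1$.
\begin{itemize}
\item The symmetries $z\mapsto -z$, $z\mapsto \bar z$ of $G_p$ only say that a nontrivial pair $\pm a$ either lies on one of the lines $\R$, $\R+\frac{\tau}{2}$, $i\R$, $\frac12+i\R$, or comes together with the distinct pair $\pm\bar a$.
\item Poincar\'e--Hopf on $E_\tau\setminus\{\pm p\}$ fixes only the total index $-2$. Together with the indices of the four half periods, it determines at best the parity of the number of pairs. Combined with the bound three, this leaves $\{0,2\}$ or $\{1,3\}$, never $\{0,1\}$.
\end{itemize}
Everything after that depends on this missing bound. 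The claim that the count changes only when a pair collides with a half period holds only once you know there is at most one pair. Otherwise pairs can be created or annihilated away from $E_\tau[2]$, for example by a saddle-node of two pairs, or by a conjugate couple $\{\pm a\},\{\pm\bar a\}$ splitting off a pair on a symmetry line. Even at a degenerate half period, you still need transversality and the direction of the bifurcation to know that the count actually flips.

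The part you single out as the main obstacle is in fact the easy one. From $-4\pi\partial_zG=\zeta(z)-r\eta_1-s\eta_2$ one gets $4\pi\partial_z^2G_p=\frac12(\wp(z+p)+\wp(z-p))+\eta_1-\frac{\pi}{b}$ and $4\pi\partial_z\partial_{\bar z}G_p=\frac{\pi}{b}$. Hence, for real $\wp(p)$, the degeneracy of $\frac{\omega_k}{2}$ reads $e_k+\eta_1+\frac{(e_k-e_i)(e_k-e_j)}{\wp(p)-e_k}\in\{0,\frac{2\pi}{b}\}$ with $\{i,j,k\}=\{1,2,3\}$, and $\wp(p)+\eta_1\in\{0,\frac{2\pi}{b}\}$ for $k=0$. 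These are two linear-fractional equations per half period, not a quadratic. Locating these candidates gives no control of the global count.

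This paper does not reprove Theorem A; it quotes it from \cite{ChenFuLin2026}. It indicates that the proof there runs through the conditional stability sets with $\Delta_j$ entire in $A$, i.e.\ the $n=1$ analogue of Section \ref{section 5}, and that is exactly the rigidity you are missing.
\begin{itemize}
\item One shows $\sigma_2\subset\R$ by taking imaginary parts of $\int|Y'|^2=\int I|Y|^2$ along the two vertical lines (as in Theorem \ref{sigma 2 is real}), and identifies $\sigma_2=\{A\in\R:Q(A)\ge0\}$.
\item On each component, either a sign argument for $\Phi_e$ on $\R+\frac{\tau}{2}$ excludes $\sigma_1$ (as in Lemma \ref{rule out 3}), or $u=\log|e^{-2\pi is(A)}|$ vanishes at both endpoints.
\item In the latter case $u'=-J_1/\sqrt{Q}$, where $J_1=\int_{\tau/2}^{\tau/2+1}\Phi_e\,\partial_AI\,dz$ is an explicit polynomial (cf.\ \eqref{derivative}).
\item Rolle's theorem plus a count of the real zeros of $J_1$ bounds the interior zeros of $u$, which yields uniqueness.
\item Existence becomes a sign condition on $J_1$ at the zeros of $Q$, i.e.\ where $a\in E_\tau[2]$. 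This is the monodromy counterpart of your half-period collisions.
\end{itemize}
Without importing an argument of this kind, or an independent uniqueness theorem for nontrivial critical points of $G_p$, your Green-function route stalls exactly at ``$0$ or $1$''.
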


See also \cite{ChenFuLinSong2026} for the study of \eqref{alpha=1,p not half period} under the other important case $\tau=\frac{1}{2}+ib$ with $b>0$, i.e., $E_\tau$ is a rhombus torus. In contrast to the rectangular case, the rhombus torus case exhibits several different phenomena arising from geometric differences, and the results depends heavily on the value of $b$.
	
Equation \eqref{main problem general case} with $n\geq 2$ seems too difficult to be studied in a unified way. For $n=2$, i.e.,
	\begin{equation}\label{main problem}
		\Delta u+e^u=8\pi(\delta_p+\delta_{-p})\quad\text{on }E_\tau,
	\end{equation}
	recently,  Kuo \cite{Kuo2026}
studied the special case $\wp'(2p)=0$, i.e., $2p=\frac{\omega_k}{2}$, $k=1,2,3$, is a half period. In this case, up to a translation, equation \eqref{main problem} becomes
	\begin{equation}\label{alpha=2,2p half period}
		\Delta u+e^u=8\pi(\delta_0+\delta_{\frac{\omega_k}{2}})\quad\text{on }E_\tau.
	\end{equation}
	In contrast to \eqref{main problem general case} for which the statement \eqref{eq: data} holds,
Kuo \cite{Kuo2026} discovered that equation \eqref{alpha=2,2p half period} could have a one-parameter scaling family of solutions $u_\beta(z)$ such that $u_\beta(z)\neq u_{\beta}(-z)$ for any $\beta$ for some torus. Such family of solutions was called a non-even family of solutions in \cite{Kuo2026}. Specifically, when $\tau=ib$ with $b>0$, i.e., $E_\tau$ is a rectangular torus, Kuo's result for \eqref{alpha=2,2p half period} implies the following result for \eqref{main problem}.

	\begin{theoremB}[\cite{Kuo2026}]\label{theorem d}
		\emph{Let $\tau=ib$ with $b>0$. Then equation \eqref{main problem} has no solutions for $2p=\frac{\omega_k}{2}$, $k=1,2$. However, if $2p=\frac{\omega_3}{2}$, then there exists $b_0>1$ such that equation \eqref{main problem} has solutions for $b\in (0, \frac1{b_0})\cup(b_0,+\infty)$.}
	\end{theoremB}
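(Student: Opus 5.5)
The plan is to reduce Theorem \ref{theorem d} to a statement about the single-source Lamé problem on an isogenous torus, and then use the known results for rectangular and rhombus tori.

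\textbf{Step 1: reduction.} First translate $z\mapsto z+p$. Since $2p=\frac{\omega_k}{2}$, the singular set $\{p,-p\}$ becomes $\{0,\frac{\omega_k}{2}\}$. So \eqref{main problem} is equivalent to \eqref{alpha=2,2p half period}. Following the approach of \cite{ChaiLinWang2015} (developing map / Liouville formula $e^u=8|f'|^2/(1+|f|^2)^2$), a solution exists if and only if there is an accessory parameter $B\in\C$ with two properties for the ODE
\[
y''=\bigl(6\wp(z)+6\wp(z-\tfrac{\omega_k}{2})+B\bigr)y .
\]
The ODE must be apparent (free of logarithms) at both singular points, and its monodromy along $\omega_1,\omega_2$ must be unitary. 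The apparent-singularity condition is automatic at these points because the potential is even about each of them.

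\textbf{Step 2: isogeny.} Use the classical addition identity
\[
\wp(z;\Lambda_\tau)+\wp(z+\tfrac{\omega_k}{2};\Lambda_\tau)=\wp(z;\Lambda')+c_k, \qquad \Lambda'=\Lambda_\tau+\tfrac{\omega_k}{2}\mathbb Z .
\]
This turns the ODE into the Lamé equation $y''=(6\wp(z;\Lambda')+B')y$ with $n=2$ on $E'=\C/\Lambda'$. Unitarity along the generators of $\Lambda_\tau$ is weaker than unitarity along $\Lambda'$, since $\omega_k\in 2\Lambda'$ only in one direction. I therefore split the monodromy data into two cases.
\begin{itemize}
\item The representation is unitary on all of $\Lambda'$. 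This gives solutions of \eqref{alpha=1,p half period} with $n=2$ on $E'$ lifted to $E_\tau$, i.e.\ the even families.
\item The monodromy along $\frac{\omega_k}{2}$ is not unitary but its square is. This is the source of Kuo's non-even families.
\end{itemize}
The second case must be analysed by means of the Hermite–Halphen ansatz $y=\prod_{j=1}^{2}\frac{\sigma(z-a_j)}{\sigma(z)}e^{cz}$ on $E'$. Its monodromy is governed by $\sum a_j$ and the $\zeta$-quasi-period relations. Squaring the monodromy along $\frac{\omega_k}{2}$ produces the eigenvalue condition.

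\textbf{Step 3: $k=1,2$ (no solutions).} Here $\Lambda'$ is again rectangular, with $\tau'=2ib$ or $\tau'=ib/2$. For the even families, Chen–Lin \cite{ChenLin2020} shows that \eqref{alpha=1,p half period} has no solutions on a rectangular torus for any $n$. For the non-even case, I would use the real structure $z\mapsto\bar z$ of a rectangular lattice: the data $(a_1,a_2,B)$ are real or come in conjugate pairs. The goal is to show that the unitarity equations $\operatorname{Re}$-type conditions (of the form $\sum a_j\cdot\eta - \omega\cdot\sum\zeta(a_j)\in\R$ for both periods) force $\sum a_j\in E'[2]$, which is excluded.

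\textbf{Step 4: $k=3$ (existence).} Here $\Lambda'=\mathbb Z+\mathbb Z\frac{1+ib}{2}$ is a rhombus lattice. It suffices to exhibit the even families, i.e.\ solutions of \eqref{alpha=1,p half period} with $n=2$ on this rhombus torus. This should hold for $b/2$ small and large, by the degeneration analysis of \cite{ChaiLinWang2015, LinWangJEP} (pre-modular form $Z_2$ has zeros near the cusps).

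The modular symmetry $\tau\mapsto-1/\tau$ maps $ib$ to $i/b$ and preserves $\frac{\omega_3}{2}$ up to the lattice. This yields a threshold $b_0>1$ together with the symmetric interval $(0,\frac1{b_0})$. Taking $b_0$ large enough that both tails are covered completes the argument.

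\textbf{Main obstacle.} I expect the main obstacle to be Step 3's non-even case, namely ruling out non-unitary $\frac{\omega_k}{2}$-monodromy with unitary square. I would try first to reduce it to sign conditions on real-analytic functions of $b$, using the explicit $n=2$ spectral polynomial, and to check those conditions on the rectangular family.
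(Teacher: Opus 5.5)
\textbf{There is a genuine gap: you drop the accessory parameter $A$.} For a solution of \eqref{alpha=2,2p half period}, $u_{zz}-\frac12u_z^2$ is elliptic with residues $\pm 2A$ at the two sources, and nothing forces $A=0$. Since $\wp'(2p)=0$ here, the apparent condition \eqref{full apparent condition} factors as $A\,(B+\wp(2p)+A\zeta(2p)-\frac{A^2}{4})=0$. So besides your branch $A=0$ there is a whole second branch \eqref{2p=0B} with $A\neq0$.

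In your translated coordinates, that potential contains $A\,(\zeta(z+\frac{\omega_k}{2})-\zeta(z))$. This term changes sign, up to an additive constant, under both $z\mapsto -z$ and $z\mapsto z+\frac{\omega_k}{2}$. Hence this ODE does not descend to $E'$. Moreover, an even $u$ would make $u_{zz}-\frac12u_z^2$ even and force $A=0$, so families on this branch have no even member. This branch is exactly where Kuo's non-even families live.

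The case you designate as their source is in fact empty. If $N'(\frac{\omega_k}{2})^2$ is conjugate into $SU(2)$, then $N'(\frac{\omega_k}{2})$ cannot be a nontrivial Jordan block, since its square would be one. So it is diagonalizable with unimodular eigenvalues, and because it commutes with the other generator, the two are simultaneously unitarizable. Thus on the $A=0$ branch, unitarity over $\Lambda_\tau$ and over $\Lambda'$ coincide.

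Consequently, the substantive half of the non-existence claim for $k=1,2$ is not addressed: you must exclude unitary monodromy on the branch \eqref{2p=0B} for $\tau=ib$. No Hermite--Halphen or reality argument on $E'$ can reach this, because that ODE does not live on $E'$. The paper does not prove this statement itself; it quotes it from Kuo, whose argument uses pre-modular forms. Within this paper's framework, one would instead study the conditional stability sets of \eqref{generalized lame equation} on that branch, where $\Delta_j(A)$ is entire in $A$.

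\textbf{Step 1 also has the wrong potential.} A source $8\pi\delta_0$ gives $u=4\log|z|+O(1)$, hence $f-f(0)\sim z^3$ and $\{f;z\}\sim-4z^{-2}$. The coefficient is therefore $2\wp$ at each source (exponents $-1,2$, as in \eqref{generalized lame equation}); $6\wp$ belongs to a $16\pi$ source. So the isogeny turns the $A=0$ equation into the $n=1$ Lam\'e equation on $E'$, not $n=2$. Step 4 as written yields solutions of $\Delta u+e^u=16\pi(\delta_0+\delta_{\omega_k/2})$, not of \eqref{alpha=2,2p half period}.

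Once corrected, this half becomes easy. Pull back Lin--Wang's solutions of \eqref{alpha=1,p half period} with $n=1$ on the rhombus torus $\tau'=\frac12+\frac{ib}{2}$ along the unramified double cover $E_\tau\to E'$. This gives existence for $k=3$ whenever $\frac b2$ avoids Lin--Wang's non-existence interval around $\frac12$. Since $ib\,(\mathbb{Z}+\frac{1+i/b}{2}\mathbb{Z})=\mathbb{Z}+\frac{1+ib}{2}\mathbb{Z}$, that interval, written in the variable $b$, has the form $[\frac1{b_0},b_0]$ with $b_0>1$. Likewise, Lin--Wang on the rectangular $E'$ ($\tau'=2ib$ or $\frac{ib}{2}$) excludes solutions with $A=0$ for $k=1,2$. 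What remains open in your plan is only, but crucially, the $A\neq0$ branch.
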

	
	Remark that for $\tau=ib$ with $b>0$, $\wp(p)\in\mathbb{R}$ if $2p=\frac{\omega_k}{2}$ for $k=1,2$, but $\wp(p)\notin\mathbb{R}$ if $2p=\frac{\omega_3}{2}$.
	To the best of our knowledge, there seems no any results for \eqref{main problem} when $p\in E_\tau\backslash E_\tau[2]$ satisfies
 $\wp'(2p)\neq0$. 

	In this paper, we study the curvature equation \eqref{main problem} under the assumption
\begin{equation}\label{eq:p-ass}p\in E_\tau\backslash E_\tau[2]\qquad\text{and}\qquad\wp'(2p)\neq 0.\end{equation}
 One will see in Section \ref{section 2} that the case $\wp'(2p)\neq 0$ is much more difficult than the case $\wp'(2p)=0$. Like \cite{ChenFuLin2026}, we would like to study the important case $\tau=ib$ with $b>0$, i.e., $E_\tau$ is a rectangular torus. By developing a quite different approach from \cite{ChenFuLin2026, Kuo2026}, we can give a complete answer for all $\wp(p)\in\R$. Recall that if $p\in E_\tau[2]$ or $2p=\frac{\omega_k}{2}$, $k=1,2$, then by \cite[Theorem 1.1]{ChenLin2020} and Theorem \ref{theorem d}, we know that \eqref{main problem} has no solutions.
	
	To state the main results of this paper, we need to fix some notations. When $\tau=ib$ with $b>0$ and $\wp(p)\in\R$, by \cite[Lemma 4.1]{ChenFuLin2026}, we can assume without loss of generality that
	\begin{equation}
		\begin{aligned}
			p&\in\left(0,\frac{1}{2}\right]\cup \left[\frac{1}{2},\frac{1+\tau}{2}\right]\cup \left[\frac{1+\tau}{2},\frac{\tau}{2}\right]\cup \left[\frac{\tau}{2},0\right).
		\end{aligned}
	\end{equation}
	Here, $[z_1,z_2]\coloneqq \{(1-t)z_1+tz_2:0\leq t\leq1\}$, and $[z_1,z_2)$, $(z_1,z_2]$, $(z_1,z_2)$ are defined similarly. Together with the assumption \eqref{eq:p-ass}, we may assume further that
	\begin{equation}
		\begin{aligned}\label{range of p}
			p&\in\left(0,\frac{1}{4}\right)\cup \left(\frac{1}{4},\frac{1}{2}\right)\cup \left(\frac{1}{2},\frac{1}{2}+\frac{\tau}{4}\right)\cup \left(\frac{1}{2}+\frac{\tau}{4},\frac{1}{2}+\frac{\tau}{2}\right)\\
			&\quad\cup \left(\frac{1}{2}+\frac{\tau}{2},\frac{1}{4}+\frac{\tau}{2}\right)\cup \left(\frac{1}{4}+\frac{\tau}{2},\frac{\tau}{2}\right)\cup \left(\frac{\tau}{2},\frac{\tau}{4}\right)\cup \left(\frac{\tau}{4},0\right).
		\end{aligned}
	\end{equation}
	Now, we can state the main results of this paper. We first consider the case where $p\in(0,1/4)$.
	\begin{theorem}\label{main result for 0,1/4}
		Let $\tau=ib$ with $b>0$, i.e., $E_\tau$ is a rectangular torus, and let $p\in(0,1/4)$. Then there exist $p_-(b)$ and $p_+(b)$ satisfying
		\begin{equation}\label{p- p+ compare}
			0<p_-(b)<\frac16<p_+(b)<\frac14,
		\end{equation}
		such that the following statements hold.
		\begin{itemize}
			\item[(1).] If $p\in(0,p_-(b)]\cup[p_+(b),\frac14)$, then the curvature equation \eqref{main problem} has no solutions.
			\item[(2).] If $p\in (p_-(b),p_+(b))$, then the curvature equation \eqref{main problem} has a unique one-parameter scaling family of solutions $u_\beta(z)$, where $\beta>0$ is
			arbitrary. Furthermore, $u_\beta(z)=u_\beta(-z)$ if and only if $\beta=1$.
		\end{itemize}
	\end{theorem}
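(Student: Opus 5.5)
The plan is to reduce the PDE to a monodromy problem for an ODE and then reduce that to a real-analytic question on the spectral parameter. We use the standard correspondence (Chai--Lin--Wang, recalled in Sections \ref{section 2}--\ref{section 3}). Equation \eqref{main problem} has a solution if and only if there is an accessory parameter $B\in\C$ such that the generalized Lam\'e equation
\[
y''=\Bigl[2\cdot 3\bigl(\wp(z+p)+\wp(z-p)\bigr)/ \,2\cdot\tfrac{1}{1}\, + \text{(apparent-singularity terms)} + B\,(\zeta(z+p)-\zeta(z-p)) + \cdots\Bigr]y
\]
has unitary monodromy. In this equation each of $\pm p$ is an apparent singularity with local exponents $-1,3$ (since $8\pi=4\pi\cdot 2$), and apparentness forces the remaining parameters to be algebraic functions of $B$ and $p$. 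Because the local monodromies at $\pm p$ are trivial, the monodromy group is generated by the two translations $N_1,N_2$ along $\omega_1,\omega_2$, which commute. Unitarity therefore splits into two cases: a completely reducible (diagonalizable) case, or a case not unitarizable. The scaling family and the evenness exactly at $\beta=1$ then follow from the argument for \eqref{eq: data} given in Section \ref{section 3}.

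The unitarizable case means the traces satisfy $\operatorname{tr}N_1,\operatorname{tr}N_2\in[-2,2]$ simultaneously, with a common eigenbasis. Let $\Delta_j(B)=\operatorname{tr}N_j(B)$ be the Hill-type discriminants. Solutions then correspond to $B$ in the intersection $\sigma_1\cap\sigma_2$ of the two conditional stability sets $\sigma_j=\Delta_j^{-1}([-2,2])$, excluding points where the monodromy is a nontrivial Jordan block.

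When $\tau=ib$ and $p\in(0,1/4)$ is real, the potential has real symmetries $z\mapsto\bar z$ and $z\mapsto -\bar z+\tau$-type reflections. These symmetries should make $\Delta_j(\bar B)=\overline{\Delta_j(B)}$ and force any intersection point to lie on $\R$. We then count real $B$ with $|\Delta_1(B)|\le2$ and $|\Delta_2(B)|\le2$.

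The key difficulty, as the abstract warns, is that for $\wp'(2p)\ne0$ the apparent-singularity constraint makes $\Delta_j$ non-polynomial in $B$, with an essential singularity. So we cannot describe $\sigma_j$ by finitely many band edges. Instead of describing each set, I would characterize the intersection directly. A point of $\sigma_1\cap\sigma_2$ with diagonal unitary monodromy yields a solution $y_1$ of the form $\prod\sigma(z-a_i)/\ldots\cdot e^{cz}$, a Hermite--Halphen ansatz. Unitarity becomes a pair of real equations on $(a_i,c)$, equivalently the vanishing of a real-valued function of $p$, an analogue of critical points of $G_p$ built from $\zeta$, $\eta_1,\eta_2$ and Legendre's relation.

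I would then show that this reduced function $f(p;b)$ has exactly the sign pattern needed: exactly one real intersection point exists for $p\in(p_-,p_+)$ and none otherwise. Here $p_\pm(b)$ are the zeros of $f$ in $(0,1/4)$. We locate $1/6$ between them by evaluating at $p=1/6$, where $3p$ and $-3p$ differ by a half period and explicit formulas apply. Uniqueness should come from monotonicity of $f$ via an argument-principle or degree count. At the endpoints $p\to0$, $p\to1/4$ we compare with \cite{ChenLin2020} and Theorem \ref{theorem d}, which give nonexistence there.

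The main obstacle is proving that the intersection set has exactly one point on the middle interval while avoiding the essential singularity. I would first try to show that any real $B$ in $\sigma_1\cap\sigma_2$ must be a common zero of a finite algebraic system, namely the Hermite--Halphen equations. Counting those zeros then avoids the transcendental discriminants entirely.
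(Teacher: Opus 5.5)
Your reduction to the generalized Lam\'e equation, the commuting monodromy, the criterion ``unitary $\Leftrightarrow$ $A\in(\sigma_1\cap\sigma_2)\setminus\{Q=0\}$'', and the Liouville scaling family all match Sections 2--3. One correction: the exponents at $\pm p$ are $-1,2$, since they must sum to $1$. So the potential is $2(\wp(z+p)+\wp(z-p))+A(\zeta(z+p)-\zeta(z-p))+B$, with $B=\frac{A^2}{4}-A\zeta(2p)-\frac{2\wp'(2p)}{A}-\wp(2p)$.

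\textbf{The reality step is unproved.} The symmetry $\Delta_j(\bar A)=\overline{\Delta_j(A)}$ only makes $\sigma_j$ symmetric about $\R$; it does not place any point of $\sigma_1\cap\sigma_2$ on $\R$. The paper needs a genuine argument here. For $A\in\sigma_2$ it integrates $Y''\bar Y$ over a $\tau$-period along both $i\R$ and $\frac12+i\R$. Taking imaginary parts, $\operatorname{Im}A\neq0$ would force the $|y_1|^2$-weighted averages of $D(z)=\zeta(z+p)-\zeta(z-p)-\zeta(2p)$ on the two lines to coincide. That is impossible, because $D=H(\wp(z))$ with $H$ strictly decreasing and $\wp(it)\le e_2<e_3\le\wp(\frac12+it)$. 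With the two lemmas on the sign of $Q$, this gives the exact description $\sigma_2=\{A\in\R\setminus\{0\}:Q(A)\ge0\}$: six explicit bands bounded by $0$, $\pm\infty$ and the nine real zeros of $Q$.

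\textbf{The counting step is the decisive gap.} Unitarity of the Hermite--Halphen solution means $(r,s)\in\R^2$, where $r+s\tau=a_1+a_2$ and $r\eta_1+s\eta_2=c(a_1,a_2)$. These are transcendental real conditions on the curve cut out by the compatibility equation, not a finite algebraic system. Nor are they the vanishing of a function of $p$: for fixed $p$ the unknown is $A$. So there is nothing to count by B\'ezout or degree, and your $f(p;b)$ and its monotonicity are never actually defined.

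The missing idea is to work band by band with $u(A)=\log|e^{-2\pi is(A)}|$, whose zeros in a band are exactly the points of $\sigma_1$ there.
\begin{itemize}
\item On $I_1,I_4,I_6$, the real function $\phi(x)=\Phi_e(x+\frac\tau2)$ has a fixed sign. Its minimum and maximum never vanish, since a point with $\phi=\phi'=0$ forces $Q(A)=0$. Continuity from $|A|\to\infty$ and from $A\to0^+$ then fixes the sign, and so $u(A)=-W\int_0^1\frac{dx}{2\phi(x)}\neq0$.
\item On $I_2,I_3,I_5$, $u$ vanishes at both endpoints, where $\operatorname{tr}N_1=\pm2$. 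Although $u$ is transcendental, its derivative is algebraic: the variation formula $\partial_A\varepsilon_1/\varepsilon_1=-W^{-1}\int_{\tau/2}^{\tau/2+1}\Phi_e\,\partial_AI\,dz$ evaluates to $u'(A)=-N(A)/(4A^3\sqrt{Q(A)})$ for an explicit sextic $N$.
\item $N$ has exactly two simple negative zeros and at most two positive ones. Rolle's theorem then excludes zeros of $u$ in $I_2$ and $I_3$, and gives exactly one in $I_5$ if and only if $N(\alpha_2)>0$ and $N(\alpha_3)>0$.
\end{itemize}
Only then does the Jacobi-form computation convert these two sign conditions into $p\in(p_-,p_+)$ and locate $1/6$.

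Finally, comparing with Chen--Lin as $p\to0$ and with Kuo's result at $2p=\frac12$ only gives nonexistence at those limiting configurations. It says nothing on all of $(0,p_-]\cup[p_+,\frac14)$ without a continuity or compactness argument you do not supply, so it cannot replace the band analysis above.
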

	\begin{remark}
		The solution $u_1(z)$ obtained in Theorem \ref{main result for 0,1/4} satisfies $u_1(z)=u_1(\bar{z})$ by using the complex variable $z=x+iy$. Indeed, since $u_1(\bar{z})$ is also an even solution of \eqref{main problem},
		the uniqueness of even solutions (see Theorem \ref{solution = unitary}) implies $u_1(z)=u_1(\bar{z})$.
	\end{remark}
	The remaining cases in \eqref{range of p} can be treated by reflection, translation, and modular transformation. Note that by \eqref{p- p+ compare}, we know $(1/2-p_+(b),1/2-p_-(b))\subset(1/4,1/2)$. To present our results more intuitively and concisely, for every $b > 0$, define
	\begin{equation}\label{Jb}
		J(b)\coloneqq(p_-(b),p_+(b))\cup\left(\frac12-p_+(b),\frac12-p_-(b)\right).
	\end{equation}
	We obtain the following result.
	\begin{theorem}\label{main result general}
		Let $\tau=ib$ with $b>0$, i.e., $E_\tau$ is a rectangular torus, and let $p$ satisfy \eqref{range of p}. Then the following statements hold.
		\begin{itemize}
			\item[(1).] Suppose that $p=t$, where $t\in(0,\frac14)\cup(\frac14,\frac12)$, then,
			\begin{itemize}
				\item[(1-1).] the curvature equation \eqref{main problem} has no solutions if $t\not\in J(b)$;
				\item[(1-2).] the curvature equation \eqref{main problem} has a unique one-parameter scaling family of solutions $\{u_\beta(z)\}_{\beta>0}$ if $t\in J(b)$. 
			\end{itemize}
			\item[(2).] Suppose that $p=t+\frac{\tau}{2}$, where $t\in(0,\frac14)\cup(\frac14,\frac12)$, then,
			\begin{itemize}
				\item[(2-1).] the curvature equation \eqref{main problem} has no solutions if $t\not\in J(b)$;
				\item[(2-2).] the curvature equation \eqref{main problem} has a unique one-parameter scaling family of solutions $\{u_\beta(z)\}_{\beta>0}$ if $t\in J(b)$. 
			\end{itemize}
			\item[(3).] Suppose that $p=\tau t$, where $t\in(0,\frac14)\cup(\frac14,\frac12)$, then,
			\begin{itemize}
				\item[(3-1).] the curvature equation \eqref{main problem} has no solutions if $t\not\in J(\frac1b)$;
				\item[(3-2).] the curvature equation \eqref{main problem} has a unique one-parameter scaling family of solutions $\{u_\beta(z)\}_{\beta>0}$ if $t\in J(\frac1b)$. 
			\end{itemize}
			\item[(4).] Suppose that $p=\frac12+\tau t$, where $t\in(0,\frac14)\cup(\frac14,\frac12)$, then,
			\begin{itemize}
				\item[(4-1).] the curvature equation \eqref{main problem} has no solutions if $t\not\in J(\frac1b)$;
				\item[(4-2).] the curvature equation \eqref{main problem} has a unique one-parameter scaling family of solutions $\{u_\beta(z)\}_{\beta>0}$ if $t\in J(\frac1b)$. 
			\end{itemize}
		\end{itemize}
		Furthermore, all the solutions obtained above satisfy $u_1(z)=u_1(-z)=u_1(\bar{z})$.
	\end{theorem}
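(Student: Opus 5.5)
The plan is to reduce every case of Theorem \ref{main result general} to Theorem \ref{main result for 0,1/4}, using symmetries of equation \eqref{main problem} that preserve solvability and the number of one-parameter scaling families. Three operations are available.

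\emph{Translation by a half period.} If $u$ solves \eqref{main problem} with singular points $\pm p$, then $v(z)\coloneqq u(z+\frac{\omega_k}{2})$ solves the same equation with singular points $\pm p-\frac{\omega_k}{2}$. Modulo $\Lambda_\tau$, this set equals $\pm(p+\frac{\omega_k}{2})$. So $p$ and $p+\frac{\omega_k}{2}$ give equivalent problems.

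\emph{Conjugation and reflection.} For $\tau=ib$ the lattice satisfies $\bar\Lambda_\tau=\Lambda_\tau$. Hence $u(\bar z)$ solves the problem with sources $\pm\bar p$, and $u(-z)$ solves it with sources $\mp p$.

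\emph{Modular transformation.} The map $z\mapsto z/\tau$ sends $E_\tau$ to $E_{-1/\tau}=E_{i/b}$. Set $w=z/\tau$ and $\tilde u(w)\coloneqq u(\tau w)+2\log|\tau|$. Then $\Delta_w\tilde u=|\tau|^2\Delta_z u$, and the Dirac masses rescale so that the weights $8\pi$ are preserved. Thus $\tilde u$ solves \eqref{main problem} on $E_{i/b}$ with sources $\pm p/\tau$. Since each map is a conformal isometry up to the constant shift, it carries one-parameter scaling families to one-parameter scaling families bijectively.

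Now treat the cases in order.

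\textbf{Case (1).} For $t\in(0,\frac14)$ the claim is exactly Theorem \ref{main result for 0,1/4}, and $t\in J(b)$ means $t\in(p_-(b),p_+(b))$. For $t\in(\frac14,\frac12)$, note that $p=t$ and $p=t-\frac12$ give equivalent problems by translation by $\frac{\omega_1}{2}$. The set $\pm(t-\frac12)$ equals $\pm(\frac12-t)$, and $\frac12-t\in(0,\frac14)$. So solvability holds if and only if $\frac12-t\in(p_-(b),p_+(b))$, i.e.\ $t\in(\frac12-p_+(b),\frac12-p_-(b))$, which is the second component of $J(b)$. By \eqref{p- p+ compare} the two components of $J(b)$ lie in $(0,\frac14)$ and $(\frac14,\frac12)$ respectively, so the union is disjoint and consistent.

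\textbf{Case (2).} Translation by $\frac{\omega_2}{2}=\frac{\tau}{2}$ maps $p=t+\frac\tau2$ to $t$ (since $\tau\in\Lambda_\tau$), which reduces it to Case (1).

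\textbf{Case (3).} Apply the modular map: $p=\tau t$ becomes $p'=t$ on $E_{i/b}$. Case (1) with $b$ replaced by $\frac1b$ then gives the criterion $t\in J(\frac1b)$.

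\textbf{Case (4).} We have $p/\tau=\frac{1}{2\tau}+t=t-\frac{i}{2b}$. This equals $t-\frac{\tau'}{2}$ with $\tau'=-1/\tau=i/b$, which is equivalent to $t+\frac{\tau'}{2}$. So Case (2) on $E_{\tau'}$ gives the criterion $t\in J(\frac1b)$.

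Finally, uniqueness of the scaling family transfers under these maps, and so does the even member $u_1$: translation by a half period commutes with $z\mapsto -z$ modulo $\Lambda_\tau$, and the modular map is linear. The symmetry $u_1(z)=u_1(-z)=u_1(\bar z)$ follows as in the Remark. The transformed $u_1(\bar z)$ is again an even solution, so the uniqueness of even solutions (Theorem \ref{solution = unitary}) gives $u_1(\bar z)=u_1(z)$. This step needs conjugation to preserve each $p$-class in \eqref{range of p}, i.e.\ $\pm\bar p=\pm p$ modulo $\Lambda_\tau$. That holds because $\wp(p)\in\R$, equivalently because every $p$ listed lies on a real line or a line $\mathrm{Re}\,z\in\frac12\mathbb Z$.

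The main obstacle is entirely Theorem \ref{main result for 0,1/4}, which is assumed here. The only remaining care is bookkeeping of representatives modulo $\Lambda_\tau$ and of the sign pairs $\pm p$.
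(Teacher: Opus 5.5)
Your proposal is correct, but it reduces the remaining cases at a different level from the paper. The paper never transports solutions of \eqref{main problem}. It transports the generalized Lam\'e equation instead, via three identities:
\begin{itemize}
\item $I(z+h;p+h,A,\tau)=I(z;p,A,\tau)$;
\item $I(z;-p,A,\tau)=I(z;p,-A,\tau)$;
\item $\tau^2I(\tau u;p,A,\tau)=I(u;p/\tau,\tau A,-1/\tau)$.
\end{itemize}
From these it deduces $\sigma_j(p+h)=\sigma_j(p)$, $\sigma_j(-p)=-\sigma_j(p)$, and $\sigma_1\cap\sigma_2(p;\tau)=\tau^{-1}(\sigma_1\cap\sigma_2)(p/\tau;-1/\tau)$; the modular map swaps $\Delta_1$ and $\Delta_2$. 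It then counts $(\sigma_1\cap\sigma_2)\setminus\mathcal Z$ through \eqref{eq: data5} and Theorem \ref{solution = unitary}.

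You work directly on the PDE. Half-period translations, $z\mapsto\bar z$, and the similarity $z\mapsto z/\tau$ with the shift $2\log|\tau|$ map solution sets bijectively and preserve evenness. Hence the number of even solutions, and by Theorem \ref{solution = unitary}(2) the number of scaling families, is invariant. This evenness argument, rather than the phrase ``conformal isometry'', is what makes the family count rigorous. At the PDE level the symmetry $p\mapsto -p$ is free, because \eqref{main problem} only depends on $\{\pm p\}$. At the GLE level it flips the sign of $A$ and must be tracked.

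What each route buys:
\begin{itemize}
\item Your route is shorter and avoids the bookkeeping of how $A$, $\mathcal Z$ and $A_*$ transform.
\item It also proves the final symmetry $u_1(\bar z)=u_1(z)$ explicitly in all four cases by checking $\{\pm\bar p\}=\{\pm p\}$ modulo $\Lambda_\tau$. The paper leaves this to the Remark after Theorem \ref{main result for 0,1/4}.
\item The paper's route, in exchange, gives the full description of $\sigma_1\cap\sigma_2$ in every case (e.g.\ Theorem \ref{conditional intersection vertical}), including the location $A_*(\hat p;\hat\tau)/\tau$ of the unitary parameter.
\end{itemize}
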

	
	Together with Theorem \ref{theorem d}, Theorem \ref{main result general} generalizes  Theorem \ref{theorem c} to equation \eqref{main problem} successfully.

The approach of studying the curvature equation by exploring two conditional stability sets $\sigma_j=\Delta_j^{-1}([-1,1])$ of the associated second-order linear ODE was first introduced by Chen and Lin \cite{ChenLin2020}, and was developed further in \cite{ChenFuLin2021,ChenFuLin2026}. In this paper, we also apply this approach to prove Theorems \ref{main result for 0,1/4} and \ref{main result general}.
For the curvature equation \eqref{alpha=1,p not half period} studied in \cite{ChenFuLin2026}, the associated Hill discriminant $\Delta_j(A)$ is holomorphic for $A\in \mathbb C$. Thanks to the holomorphy of $\Delta_j(A)$ and the symmetries of the rectangular torus, the conditional stability sets $\sigma_j=\Delta_j^{-1}([-1,1])$ are not difficult to study in \cite{ChenFuLin2026}. However, when we consider the curvature equation \eqref{main problem}, the situation changes completely: The associated Hill discriminant $\Delta_j(A)$ is no longer an entire function on $\mathbb C$ but has an essential singularity at $A=0$, which makes the analysis of $\sigma_j=\Delta_j^{-1}([-1,1])$ much more difficult. To the best of our knowledge, such a situation has not been encountered in the literature of studying the curvature equations on torus. Therefore, new ideas are required to study the property of $\sigma_j=\Delta_j^{-1}([-1,1])$. Thanks to the symmetries of the rectangular torus, we can still determine $\sigma_2$ explicitly; see Corollary \ref{six intervals}. However, we can not determine $\sigma_1$. Fortunately, a delicate and novel analysis allows us to determine the structure of $\sigma_1\cap\sigma_2$ directly, which is quite surprising. Remark that	this approach is quite different from that of proving Theorem \ref{theorem d} in  \cite{Kuo2026}, where Kuo applied the idea of pre-modular forms from \cite{LinWangJEP}. Remark that to get a precise criterion of exsitence and non-existence of solutions, it seems that the idea of pre-modular forms works well only when all singular sources belong to $E_{\tau}[2]$ up to a translation. Thus, it seems that the idea of pre-modular forms can not be used to prove Theorems \ref{main result for 0,1/4} and \ref{main result general} for \eqref{main problem} with $2p\notin E_{\tau}[2]$ considered in this paper.

	The rest of this paper is organized as follow. In Section \ref{section 2}, we briefly review the basic theory of the generalized Lam\'e equation \eqref{generalized lame equation}, and define conditional stability sets $\sigma_j$. In Section \ref{section 3}, we discuss the integrability of the curvature equation \eqref{main problem} and establish a deep connection between \eqref{main problem} and the generalized Lam\'e equation \eqref{generalized lame equation}. The blow-up behavior of solutions to the curvature equation \eqref{main problem} is also given. In Section \ref{section 4}, we present some properties of the conditional stability sets $\sigma_j$ by proving that $A=0$ is an essential singularity of the Hill discriminant. In Section \ref{section 5}, we prove Theorem \ref{main result for 0,1/4} by analyzing $\sigma_1\cap\sigma_2$ via new ideas. This section is the most novel part of this paper.
Finally, Theorem \ref{main result general} will be proved in Section \ref{section 6}.

	\section{Preliminaries: generalized Lam\'e equation}\label{section 2}
	
	In this section, we briefly review some basic facts about the monodromy of the following generalized Lam\'e equation (GLE for short):
	\begin{equation}
		\label{generalized lame equation}
		\begin{aligned}
			y^{\prime\prime}(z) &= \left[ 2(\wp(z+p)+\wp(z-p)) + A(\zeta(z+p)-\zeta(z-p)) + B \right] y(z)\\
			&\eqqcolon I(z;p,A,\tau)y(z),
		\end{aligned}	
	\end{equation}
	where $p\not\in E_\tau[2]$ and, initially, $A,B\in\C$. 
	See e.g. \cite{ChaiLinWang2015, ChenLin2020} for similar arguments for the classical Lam\'{e} equation.
	See also \cite{Wang2026} for a theory of general second order generalized Lam\'{e} equations.
	
	Clearly GLE \eqref{generalized lame equation} is of Fuchsian type with regular singularities at $\pm p$. Since the local exponents of \eqref{generalized lame equation} at $\pm p$ are $-1$ and $2$, we need to figure out whether logarithmic singularities may occur,  and the apparent condition for GLE \eqref{generalized lame equation} is given as follows:
	
	\begin{lemma}\label{apparent condition lemma}
		The $\pm p$ are apparent singularities of GLE \eqref{generalized lame equation} (i.e., any solution of \eqref{generalized lame equation} has no logarithmic singularities at $\pm p$) if and only if
		\[
		A\left(B+\wp(2p)+A\zeta(2p)-\frac{A^2}{4}\right)+2\wp'(2p)=0.
		\]
	\end{lemma}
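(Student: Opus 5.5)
The plan is a direct Frobenius computation at $z=p$, followed by a symmetry argument for $z=-p$.

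\emph{Reduction to $z=p$.} The potential $I(z;p,A,\tau)$ is even in $z$. Indeed, $\wp$ is even and $\zeta$ is odd, so
\[
\zeta(-z+p)-\zeta(-z-p)=-\zeta(z-p)+\zeta(z+p).
\]
Hence $y(z)$ solves \eqref{generalized lame equation} if and only if $y(-z)$ does. This shows that $-p$ is apparent if and only if $p$ is apparent, so it suffices to treat $z=p$.

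\emph{Laurent expansion of $I$.} Set $w=z-p$ and use $\zeta(w)=\frac1w+O(w^3)$ and $\wp(w)=\frac1{w^2}+O(w^2)$. Expanding the regular terms at $2p$ gives
\[
I=\frac{2}{w^2}-\frac{A}{w}+c_0+c_1w+O(w^2),
\]
with
\[
c_0=2\wp(2p)+A\zeta(2p)+B,\qquad c_1=2\wp'(2p)-A\wp(2p).
\]
Here $c_1$ collects $2\wp'(2p)$ from $2\wp(2p+w)$ and $-A\wp(2p)$ from $A\zeta(2p+w)$. The singular part $-A\zeta(w)$ contributes nothing to $w^0$ or $w^1$.

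\emph{Frobenius recursion.} The exponents are $-1$ and $2$, which differ by the integer $3$. The solution attached to exponent $2$ always exists without logarithms. Therefore $p$ is apparent if and only if the exponent $-1$ admits a log-free solution
\[
y=w^{-1}\sum_{k\ge0}a_kw^k,\qquad a_0=1.
\]
Comparing the coefficients of $w^{k-3}$ gives
\[
k(k-3)\,a_k=-A\,a_{k-1}+c_0\,a_{k-2}+c_1\,a_{k-3}+\cdots,
\]
where the omitted terms involve higher coefficients of $I$ and only enter for $k\ge4$. This yields $a_1=A/2$ and $a_2=A^2/4-c_0/2$.

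At the resonant index $k=3$ the left side vanishes. The series then exists (with $a_3$ free) if and only if
\[
0=-Aa_2+c_0a_1+c_1=Ac_0-\frac{A^3}{4}+c_1.
\]
Substituting $c_0$ and $c_1$, this equals
\[
A\left(B+\wp(2p)+A\zeta(2p)-\frac{A^2}{4}\right)+2\wp'(2p),
\]
which is exactly the stated condition. For $k\ge4$ the factor $k(k-3)$ is nonzero, so the recursion continues without obstruction.

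The only delicate point is bookkeeping of the expansion coefficients up to order $w^1$, in particular confirming that $\zeta(w)-\frac1w=O(w^3)$ contributes no lower-order terms.
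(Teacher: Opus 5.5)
Your proposal is correct and is essentially the paper's own proof. It uses the same Laurent coefficients (your $c_0,c_1$ are the paper's $I_0,I_1$), the same recursion $k(k-3)a_k=-Aa_{k-1}+\sum_{j=0}^{k-2}c_ja_{k-2-j}$ with the compatibility condition $-Aa_2+c_0a_1+c_1=0$ at the resonant index $k=3$, and the same evenness argument to transfer the result to $-p$; the one point the paper makes explicit that you leave tacit is that, once this condition holds, the formal series converges by Frobenius theory and so gives a genuine log-free local solution.
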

	\begin{proof}
Recall that the local exponents of GLE
\eqref{generalized lame equation} at $\pm p$ are $-1$ and $2$.
The exponent $2$ always gives a convergent Frobenius solution.
Thus, by Frobenius theory, $\pm p$ are apparent singularities if and
only if GLE \eqref{generalized lame equation} has a local solution
with exponent $-1$ at each of $\pm p$ of the form
\begin{equation}\label{frobenius sum}
    y(z)=\sum_{k=0}^{\infty}c_k(z\mp p)^{-1+k},\quad\text{with }c_0=1.
\end{equation}
We only need to establish this lemma for the point $p$.
Near $z=p$, it is well known that
\begin{equation*}
    \wp(z-p)=\frac{1}{(z-p)^2}
    +\frac{g_2}{20}(z-p)^2+O((z-p)^4),
\end{equation*}
\begin{equation*}
    \wp(z+p)=\wp(2p)+\wp^\prime(2p)(z-p)
    +\frac{1}{2}\wp^{\prime\prime}(2p)(z-p)^2+O((z-p)^3),
\end{equation*}
\begin{equation*}
    \zeta(z-p)=\frac{1}{z-p}
    -\frac{g_2}{60}(z-p)^3+O((z-p)^5)
\end{equation*}
and
\begin{equation*}
    \zeta(z+p)=\zeta(2p)-\wp(2p)(z-p)
    -\frac{1}{2}\wp^\prime(2p)(z-p)^2+O((z-p)^3).
\end{equation*}
Write the full Laurent expansion of the potential near $z=p$ as
\[
    I(z;p,A,\tau)=\frac{2}{(z-p)^2}-\frac{A}{z-p}
    +\sum_{j=0}^{\infty}I_j(z-p)^j,
\]
where $I_j$ are the Taylor coefficients of the holomorphic remainder.
In particular,
\[
    I_0=2\wp(2p)+A\zeta(2p)+B,
    \qquad I_1=2\wp^\prime(2p)-A\wp(2p).
\]
Substituting \eqref{frobenius sum} into GLE
\eqref{generalized lame equation} and comparing the coefficients of
$(z-p)^{k-3}$, we obtain
the recursive formula
\[
    k(k-3)c_k=-Ac_{k-1}
    +\sum_{j=0}^{k-2}I_jc_{k-2-j},
    \qquad k\geq1,
\]
where the sum is understood to be zero when $k=1$.
For $k=1,2$, this gives
\[
    c_1=\frac{A}{2},
    \qquad c_2=\frac{A^2}{4}-\frac{I_0}{2}.
\]
At the resonant index $k=3$, the coefficient of $c_3$ vanishes, so the
recurrence is compatible if and only if
\[
    0=-Ac_2+I_0c_1+I_1
    =-\frac{A^3}{4}+AI_0+I_1,
\]
or equivalently,
\begin{equation}\label{full apparent condition}
    A\left(B+\wp(2p)+A\zeta(2p)-\frac{A^2}{4}\right)
    +2\wp^\prime(2p)=0.
\end{equation}
When this condition holds, $c_3$ is arbitrary, and all $c_k$, $k\geq 4$, are determined by the recursive formula after $c_3$ is chosen.
Frobenius theory
ensures that the resulting series converges in a sufficiently small
punctured neighborhood of $p$. Together with the solution with exponent
$2$, it gives a local fundamental system without logarithmic singularities.
The condition at $-p$ is the same because the potential is even.
The proof is complete.
\end{proof}

\begin{remark} For the case $\wp'(2p)=0$ studied in \cite{Kuo2026}, the apparent condition \eqref{full apparent condition} is equivalent to either $A=0$ with $B\in\mathbb{C}$ arbitrary or 
\begin{equation}\label{2p=0B}
B=-\wp(2p)-A\zeta(2p)+\frac{A^2}{4},\quad A\in\mathbb{C}.
\end{equation} 
For the first case $A=0$ with $B\in\mathbb{C}$, the associated Hill discriminant $\Delta_j(B)$ of \eqref{generalized lame equation} is holomorphic as a function of $B\in\mathbb{C}$. For the second case \eqref{2p=0B}, the associated Hill discriminant $\Delta_j(A)$ of \eqref{generalized lame equation} is holomorphic as a function of $A\in\mathbb{C}$.
	
	Throughout the rest of this paper, we always assume that
	\begin{equation}\label{pcondition}
p\in E_{\tau}\setminus E_{\tau}[2],\qquad	\wp'(2p)\neq0,
	\end{equation}
	and the apparent condition \eqref{full apparent condition} holds, which is equivalent to
	\begin{equation}
		\label{apparent condition}
	A\in\mathbb{C}\setminus\{0\}\quad\text{and}\quad	B = \frac{A^2}{4} - A\zeta(2p) - \frac{2\wp^\prime(2p)}{A} - \wp(2p).
	\end{equation}
It turns out that the associated Hill discriminant $\Delta_j(A)$ of \eqref{generalized lame equation} is only holomorphic as a function of $A\in\mathbb{C}\setminus\{0\}$, and has an essential singularity at $A=0$; see Theorem \ref{essential singularity} for the proof. The presence of the essential singularity makes the problem more difficult to study, and requires us to develop different ideas; see Section \ref{section 5}.
	\end{remark}

\subsection{Monodromy matrices}	
By Lemma \ref{apparent condition lemma} and \eqref{pcondition}-\eqref{apparent condition}, we know the local monodromy matrix at $\pm p$ must be the identity matrix $I_2$. Thus, any solution $y(z)$ of GLE \eqref{generalized lame equation} has no branch points, which implies that $y(z)$ can be viewed as a single-valued meromorphic function in $\mathbb C$ by analytic continuation. Therefore, the monodromy representation of  \eqref{generalized lame equation} is a group homomorphism \begin{equation}\label{group homomorphism}
		\rho:\pi_1(E_\tau)\to SL(2,\C).
	\end{equation}
Take any linearly independent solutions $\tilde{y}_1(z),\tilde{y}_2(z)$ of \eqref{generalized lame equation}, there are two monodromy matrices $N_k=N_k(A)\in SL(2,\C)$ such that
\begin{equation}\nonumber
		\begin{pmatrix} \tilde{y}_1(z+1) \\ \tilde{y}_2(z+1) \end{pmatrix}
		= N_1 \begin{pmatrix} \tilde{y}_1(z) \\ \tilde{y}_2(z) \end{pmatrix},\quad\forall z\in\mathbb{C},
	\end{equation}
	\begin{equation}\nonumber
		\begin{pmatrix} \tilde{y}_1(z+\tau) \\ \tilde{y}_2(z+\tau) \end{pmatrix}
		= N_2 \begin{pmatrix} \tilde{y}_1(z) \\ \tilde{y}_2(z) \end{pmatrix},\quad\forall z\in\mathbb{C}.
	\end{equation}
Then the monodromy group is generated by $N_1$ and $N_2$. Clearly 
$$N_1 N_2 = N_2 N_1,$$
so there exists a common eigenfunction $y_1(z)$ of $N_1$ and $N_2$, i.e.,
	\begin{equation}\label{y1 monodromy}
		y_1(z+1)=\varepsilon_1y_1(z),\quad y_1(z+\tau)=\varepsilon_2y_1(z).
	\end{equation}
Clearly $y_2(z)\coloneqq y_1(-z)$ is also a solution of GLE \eqref{generalized lame equation} and
satisfies	\begin{equation}\label{y2 monodromy}
			y_2(z+1)=\varepsilon_1^{-1}y_2(z),\quad y_2(z+\tau)=\varepsilon_2^{-1}y_2(z),
	\end{equation}
namely $y_2(z)$ is also an eigenfunction of $N_k$ with eigenvalue $\varepsilon_k^{-1}$. Clearly, if $\varepsilon_k\neq \pm1$ for some $k$, then $y_1(z)$ and $y_2(z)$ are linearly independent. In general, $y_1(z)$ and $y_2(z)$ may be linearly dependent. Define
	\begin{equation}
		\Phi_e(z)\coloneqq y_1(z)y_2(z)=y_1(z)y_1(-z),
	\end{equation}
	then $\Phi_e(z)$ is an even elliptic function. A direct computation shows that $\Phi_e(z)$ satisfies the following second symmetric product equation for \eqref{generalized lame equation} (cf. \cite{ChaiLinWang2015}):
	\begin{equation}\label{second symmetric product equation}
		\Phi^{\prime\prime\prime}(z)-4I(z;p,A,\tau)\Phi^{\prime}(z)-2I^\prime(z;p,A,\tau)\Phi(z)=0.
	\end{equation}
	The following lemma provides more information on $\Phi_e(z)$.
	\begin{lemma}\label{dimension one}
		The dimension of the space of even elliptic solutions to \eqref{second symmetric product equation} is $1$. Moreover, up to multiplying a non-zero constant, we have
		\begin{align}\label{Phie exact expression}
			\Phi_e(z)=&\wp(z+p)+\wp(z-p)-A(\zeta(z+p)-\zeta(z-p))\nonumber\\
			&+\frac{A^2}{2}+A\zeta(2p)+\frac{2\wp^\prime(2p)}{A}-2\wp(2p).
		\end{align}
	\end{lemma}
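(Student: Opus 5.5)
Any even elliptic solution $\Phi$ of \eqref{second symmetric product equation} has poles only at $\pm p$ (mod $\Lambda_\tau$), since $I$ is holomorphic elsewhere and \eqref{second symmetric product equation} is linear with coefficients holomorphic away from $\pm p$. So the plan has three parts: bound the pole order at $\pm p$, write down the general candidate of that form, and let the equation fix its coefficients.

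\textbf{Pole order.} The local exponents of \eqref{generalized lame equation} at $p$ are $-1$ and $2$. Products of two solutions therefore have local exponents $-2$, $1$ and $4$, and these are the exponents of \eqref{second symmetric product equation} at $p$. Since $p$ is apparent (Lemma \ref{apparent condition lemma}), the third-order equation has no logarithmic solutions either, so every local solution at $p$ is meromorphic with a pole of order at most $2$. The same holds at $-p$ by evenness.

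\textbf{General candidate.} An even elliptic function with at most double poles at $\pm p$ can be written as
\[
\Phi(z)=a(\wp(z+p)+\wp(z-p))+b(\zeta(z+p)-\zeta(z-p))+c .
\]
Indeed, $\zeta(z+p)-\zeta(z-p)$ is elliptic and even, and evenness forces the principal parts at $p$ and $-p$ to correspond. This is a $3$-dimensional space, and the solution space of the ODE is also $3$-dimensional, so we expect the conditions to cut it down to a line.

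\textbf{Coefficient matching.} Substitute $\Phi$ into \eqref{second symmetric product equation} and expand at $z=p$ using the Laurent expansions from the proof of Lemma \ref{apparent condition lemma}. The left-hand side is an odd elliptic function with poles only at $\pm p$, of order at most $5$. By oddness and ellipticity it vanishes identically once the principal part at $p$ vanishes: what remains is an odd holomorphic elliptic function, hence a constant, hence zero.

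The coefficients are fixed in order:
\begin{itemize}
\item the $(z-p)^{-5}$ coefficient vanishes automatically, since $-2$ is an exponent;
\item the $(z-p)^{-4}$ coefficient gives $b=-Aa$;
\item the $(z-p)^{-3}$ coefficient gives $c$ in terms of $a$, $A$, $I_0$ and $\wp(2p)$;
\item the lower-order terms should hold automatically, using the apparent condition \eqref{apparent condition} to eliminate $B$.
\end{itemize}
Normalizing $a=1$ should then give exactly \eqref{Phie exact expression}, with constant
\[
\tfrac{A^2}{2}+A\zeta(2p)+\tfrac{2\wp'(2p)}{A}-2\wp(2p),
\]
where the $\wp'(2p)/A$ term enters through $B$.

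\textbf{Dimension.} The computation shows that $a$ determines $b$ and $c$, so the space of even elliptic solutions has dimension at most $1$. It is nonzero because $\Phi_e=y_1(z)y_1(-z)$ is such a solution. This needs $\Phi_e\not\equiv0$, which holds since $y_1\not\equiv0$. Hence the dimension is exactly $1$ and $\Phi_e$ is a multiple of the explicit function.

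\textbf{Main obstacle.} The hard step is the bookkeeping of the low-order Laurent coefficients, namely the $(z-p)^{-2}$ and $(z-p)^{-1}$ terms of $\Phi'''-4I\Phi'-2I'\Phi$. One must verify that these vanish identically given \eqref{apparent condition}, rather than imposing a further constraint that would force $a=0$. I would first try to avoid this by an existence argument: $\Phi_e$ exists, so these conditions are consistent, and the uniqueness of $(b,c)$ given $a$ then identifies the constant directly.
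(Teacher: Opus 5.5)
Your proposal is correct and follows essentially the paper's route: existence of $\Phi_e=y_1(z)y_1(-z)$, the exponents $\{-2,1,4\}$ at $p$ forcing double poles, reduction to the form $\wp(z+p)+\wp(z-p)+b(\zeta(z+p)-\zeta(z-p))+c$, and the $(z-p)^{-4}$ and $(z-p)^{-3}$ coefficients giving $b=-A$ and the stated constant. The only difference is in how the dimension bound is obtained. The paper cancels leading coefficients via $k_2\Phi_e-k_1\Phi$, whereas you observe that $b$ and $c$ are forced to be proportional to $a$. Your existence argument correctly makes checking the $(z-p)^{-2}$ and $(z-p)^{-1}$ coefficients unnecessary.
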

	\begin{proof}
		Notice that we have already constructed an even elliptic solution $\Phi_e(z)$. For any nonzero even elliptic solution $\Phi(z)$ of \eqref{second symmetric product equation}, we know that $\Phi(z)$ can not be a constant, and the local exponents at $\pm p$ are the same and must be one of $\{-2,1,4\}$. Since a non-constant elliptic function must have poles, we see that $\Phi(z)$ must have poles of order $2$ both at $\pm p$. Define
		\begin{equation}\label{eqk1k2}
		k_1\coloneqq\lim_{z\to p}(z-p)^2\Phi_e(z)\neq 0,\quad k_2\coloneqq\lim_{z\to p}(z-p)^2\Phi(z)\neq 0.
		\end{equation}
	Then $k_2\Phi_e(z)-k_1\Phi(z)$ is also an even elliptic solution of \eqref{second symmetric product equation}. If $k_2\Phi_e(z)-k_1\Phi(z)\neq 0$, then $p$ is also a pole of $k_2\Phi_e(z)-k_1\Phi(z)$ with order $2$, a contradiction with \eqref{eqk1k2}. Thus $k_2\Phi_e(z)-k_1\Phi(z)=0$. This proves that the dimension of the space of even elliptic solutions to \eqref{second symmetric product equation} is $1$. 
		
		To prove \eqref{Phie exact expression},
up to multiplying a nonzero constant, we may assume $k_1=1$, i.e.,
		\begin{equation}\nonumber
			\Phi_e(z)=\frac{1}{(z-p)^2}+\frac{c_{-1}}{z-p}+c_0+c_1(z-p)+\cdots.
		\end{equation}		
		Recall the proof of Lemma \ref{apparent condition lemma} that
		\begin{equation}\nonumber
			I(z;p,A,\tau)=\frac{2}{(z-p)^2}-\frac{A}{z-p}+I_0+I_1(z-p)+I_2(z-p)^2+\cdots,
		\end{equation}
		\begin{equation}\nonumber
			I^\prime(z;p,A,\tau)=-\frac{4}{(z-p)^3}+\frac{A}{(z-p)^2}+I_1+2I_2(z-p)+\cdots.
		\end{equation}
Inserting these expansions into \eqref{second symmetric product equation} leads to $c_{-1}=A$, so we may assume that
		\begin{equation}\label{Phie formula original}
			\Phi_e(z)=\wp(z+p)+\wp(z-p)-A(\zeta(z+p)-\zeta(z-p))+d
		\end{equation}
		for some constant $d\in\C$. By inserting formula \eqref{Phie formula original} into \eqref{second symmetric product equation} and calculating the Laurent expansion at $z=p$, it is easy to deduce from the coefficient of $(z-p)^{-3}$ that
		\begin{equation}\nonumber
			d=\frac{A^2}{2}+A\zeta(2p)+\frac{2\wp^\prime(2p)}{A}-2\wp(2p).
		\end{equation}
		The proof is complete.
	\end{proof}
	\begin{lemma}\label{Q(A)}
		Let $\Phi_e(z)=y_1(z)y_2(z)=y_1(z)y_1(-z)$ be the even elliptic solution of \eqref{second symmetric product equation} given in \eqref{Phie exact expression}, and let $W=y_1(z)y_2^\prime(z)-y_1^\prime(z)y_2(z)$ be the Wronskian of $y_1$ and $y_2$. Then
		\begin{equation}\label{eqQA}
			Q(A)  \coloneqq\Phi_e^\prime(z)^2-2\Phi_e^{\prime\prime}(z)\Phi_e(z) + 4I(z; p, A, \tau)\Phi_e(z)^2
		\end{equation}
		 satisfies $W^2=Q(A)$
		and
		\begin{equation}\label{QA}
			Q(A)=\frac{(A^3-12\wp(2p)A+8\wp'(2p))
			(A^6-12\wp(2p)A^4+(48\wp(2p)^2-4g_2)A^2-16\wp'(2p)^2)}{4A^3}.
		\end{equation}
	\end{lemma}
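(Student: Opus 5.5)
The plan has two parts. First I prove the identity $W^2=Q(A)$ and show that the right-hand side of \eqref{eqQA} does not depend on $z$. Then I compute the constant by a Laurent expansion at $z=p$.

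For the first part, use $y_j''=Iy_j$ and write $\Phi=\Phi_e=y_1y_2$. Then
\[
\Phi'=y_1'y_2+y_1y_2',\qquad \Phi''=2I\Phi+2y_1'y_2'.
\]
Therefore
\[
\Phi'^2-2\Phi''\Phi+4I\Phi^2=(y_1'y_2+y_1y_2')^2-4y_1y_2y_1'y_2'=(y_1y_2'-y_1'y_2)^2=W^2 .
\]
Here $W$ is a constant by Abel's theorem, since \eqref{generalized lame equation} has no first-order term. So the expression in \eqref{eqQA} is a constant.

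The same constancy also follows from \eqref{second symmetric product equation} alone. Differentiating the right-hand side of \eqref{eqQA} gives $-2\Phi(\Phi'''-4I\Phi'-2I'\Phi)=0$. This version is the useful one, because it applies directly to the explicit $\Phi_e$ from Lemma \ref{dimension one}. There $\Phi_e$ is normalized only up to a constant, and the scaling of $y_1$ is chosen so that $y_1y_2$ equals that $\Phi_e$; then $W^2$ equals the constant computed from it. Since the constant depends only on $A$ (and $p,\tau$), I denote it $Q(A)$.

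For the second part, expand everything at $z=p$ and read off the $(z-p)^0$ coefficient; all singular coefficients must cancel automatically. Write
\[
\Phi_e=\frac{1}{(z-p)^2}+\frac{A}{z-p}+c_0+c_1(z-p)+c_2(z-p)^2+\cdots .
\]
The coefficients are obtained from the expansions of $\wp(z\pm p)$ and $\zeta(z\pm p)$ already listed in the proof of Lemma \ref{apparent condition lemma}:
\[
c_0=\wp(2p)-A\zeta(2p)+d,\qquad c_1=\wp'(2p)+A\wp(2p),\qquad c_2=\tfrac12\wp''(2p)+\tfrac{g_2}{20}+\tfrac{A}{2}\wp'(2p),
\]
where $d$ is the constant in \eqref{Phie exact expression}. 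For $I$, use $I_0,I_1$ from that proof together with $B$ from \eqref{apparent condition}. One also needs $I_2=\wp''(2p)+\tfrac{g_2}{10}-\tfrac{A}{2}\wp'(2p)$.

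Collect the constant term of $\Phi'^2-2\Phi''\Phi+4I\Phi^2$. It involves $c_0,c_1,c_2$ and $I_0,I_1,I_2$. Simplify using $\wp''=6\wp^2-g_2/2$ and $\wp'^2=4\wp^3-g_2\wp-g_3$, eliminating $g_3$ so that only $\wp(2p)$, $\wp'(2p)$ and $g_2$ remain. The result should be $Q(A)$ multiplied by $4A^3$, giving a polynomial of degree $9$ in $A$. I would check that it equals the product in \eqref{QA}. Useful checks are the leading term $A^9$, giving $Q\sim A^6/4$, and the lowest-order term $-128\wp'(2p)^3/(4A^3)$.

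The main obstacle is this bookkeeping. Many terms contain $\zeta(2p)$, and all of them must cancel; this is a good consistency check. The factor $1/A$ in $B$ and in $d$ produces the $A^{-3}$ denominator.

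To organize the computation, I would first set $\Phi_e=\Psi+d$ and expand in powers of $d$. The formula $B=-d+\tfrac34A^2-\wp(2p)-\ldots$ then links the two constants, which should make the factorization visible. The first factor $A^3-12\wp(2p)A+8\wp'(2p)$ presumably corresponds to the vanishing of $c_0$-type combinations. If direct factoring is awkward, an alternative is to verify the identity by matching both sides at enough special values of $A$, or by comparing coefficients of each power of $A$ from $A^9$ down to $A^0$.
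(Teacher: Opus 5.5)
Your derivation of $W^2=Q(A)$ is correct, and it is more direct than the paper's. The identity $\Phi_e''=2I\Phi_e+2y_1'y_2'$ turns the expression in \eqref{eqQA} into $(y_1y_2'-y_1'y_2)^2$ in one line, whereas the paper adds the two Riccati identities obtained from \eqref{eqy1y2}. Constancy via Abel, or via \eqref{second symmetric product equation}, is also fine.

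The gap is in the explicit formula \eqref{QA}, which is the real content of the lemma. Your bookkeeping is wrong. Put $w=z-p$ and $\Phi_e=\sum_{j\ge-2}c_jw^j$, with $c_{-2}=1$ and $c_{-1}=A$. Then the $w^0$ coefficient of $\Phi_e'^2-2\Phi_e''\Phi_e+4I\Phi_e^2$ is not a function of $c_0,c_1,c_2,I_0,I_1,I_2$ alone:
\begin{itemize}
\item the term $6w^{-4}$ of $\Phi_e''$ meets $c_4w^4$;
\item the term $-2w^{-3}$ of $\Phi_e'$ meets $4c_4w^3$;
\item $I_3$ and $I_4$ meet the $w^{-3}$ and $w^{-4}$ terms of $\Phi_e^2$.
\end{itemize}
Altogether the constant term contains the extra piece $-36c_4-14Ac_3+8AI_3+4I_4$. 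It cannot be dropped: $w^4$ is the resonant exponent $4$ of \eqref{second symmetric product equation}, and $Q$ is linear in that coefficient. Already for $I=2w^{-2}$ and $\Phi=w^{-2}+c_1w+c_4w^4$ one gets $Q=9c_1^2-36c_4$. Here $c_4=\frac1{24}\wp^{(4)}(2p)+\frac{g_3}{28}+\frac{A}{24}\wp'''(2p)$, and this is where $g_3$ actually enters. The data you list contain no $g_3$, so the elimination step you mention would never arise. Executed as stated, the plan produces a wrong polynomial. In any case, the identity with \eqref{QA} is only announced (``should be'', ``I would check''), not derived.

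You can repair it in one of two ways.
\begin{itemize}
\item Carry the expansion to order $w^4$ and reduce with $\wp'''=12\wp\wp'$, $\wp^{(4)}=12\wp'^2+12\wp\wp''$ and the cubic relation at $2p$.
\item Follow the paper. Set $H=\zeta(z+p)-\zeta(z-p)-\zeta(2p)$. Then $I=2H^2+AH+\frac{A^2}{4}-3\wp(2p)-\frac{2\wp'(2p)}{A}$ and $\Phi_e=H^2-AH+\frac{A^2}{2}-3\wp(2p)+\frac{2\wp'(2p)}{A}$. Also $H''=2H^3-6\wp(2p)H+2\wp'(2p)$ and $(H')^2=H^4-6\wp(2p)H^2+4\wp'(2p)H+g_2-3\wp(2p)^2$.
\end{itemize}
In the second route, $\Phi_e'=(2H-A)H'$ and $\Phi_e''=2(H')^2+(2H-A)H''$, so $Q$ becomes a polynomial in $H$. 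Since $H$ is nonconstant and $Q$ is constant in $z$, this polynomial is constant, so $Q$ equals its value at $H=0$:
\[
\begin{aligned}
4A^3Q(A)={}&-4A^2\bigl(A^3-12\wp(2p)A+8\wp'(2p)\bigr)\bigl(g_2-3\wp(2p)^2\bigr)\\
&+8\wp'(2p)A^3X+\bigl(A^3-12\wp(2p)A-8\wp'(2p)\bigr)X^2,
\end{aligned}
\]
where $X=A^3-6\wp(2p)A+4\wp'(2p)$. Expanding this gives exactly the numerator of \eqref{QA}.
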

	\begin{proof}
		By the symmetric product equation \eqref{second symmetric product equation}, we know $Q(A)$ is independent of $z$.
		To find the explicit expression of $Q(A)$, we set
		\[
		H(z)=\zeta(z+p)-\zeta(z-p)-\zeta(2p).
		\]
		Then the addition formulas of elliptic functions give 
		\[
		H^2=\wp(z+p)+\wp(z-p)+\wp(2p),\]\[
		H''=2H^3-6\wp(2p)H+2\wp'(2p),\]\[
		(H')^2=H^4-6\wp(2p)H^2+4\wp'(2p) H+g_2-3\wp(2p)^2.
		\]
	Consequently, $$I(z;p,A,\tau)=2H^2+AH+\frac{A^2}{4}-3\wp(2p)-\frac{2\wp'(2p)}{A},$$
		$$\Phi_e=H^2-AH+\frac{A^2}{2}-3\wp(2p)+\frac{2\wp'(2p)}{A}.$$ Inserting these formulas into \eqref{eqQA} yields \eqref{QA}.

		Since $\Phi_e(z)=y_1(z)y_2(z)$ and $W=y_1(z)y_2^\prime(z)-y_1^\prime(z)y_2(z)$, we have
		\begin{equation}\label{eqy1y2}
			\frac{y_1^\prime}{y_1}=\frac{\Phi_e^\prime-W}{2\Phi_e},\quad\quad	\frac{y_2^\prime}{y_2}=\frac{\Phi_e^\prime+W}{2\Phi_e}.
		\end{equation}
		Then
		\begin{equation}\nonumber
			\frac{\Phi_e^{\prime \prime}}{2 \Phi_e}-\frac{\Phi_e^{\prime}-W}{2 \Phi_e^2} \Phi_e^{\prime}=\left(\frac{y_1^{\prime}}{y_1}\right)^{\prime}=\frac{y_1^{\prime \prime}}{y_1}-\left(\frac{y_1^{\prime}}{y_1}\right)^2=I(z ; p, A, \tau)-\left(\frac{\Phi_e^{\prime}-W}{2 \Phi_e}\right)^2
		\end{equation}
		and
		\begin{equation}\nonumber
			\frac{\Phi_e^{\prime \prime}}{2 \Phi_e}-\frac{\Phi_e^{\prime}+W}{2 \Phi_e^2} \Phi_e^{\prime}=I(z ; p, A, \tau)-\left(\frac{\Phi_e^{\prime}+W}{2 \Phi_e}\right)^2.
		\end{equation}
		Adding the above two formulas, we obtain $W^2=Q(A)$.
		\end{proof}
	
	Let $\sigma(z)=\sigma(z ; \tau)$ be the Weierstrass sigma function, normalized by $\sigma^\prime/\sigma=\zeta$ and $\sigma^\prime(0)=1$, which is an odd entire function with simple zeros at $\Lambda_\tau$, and satisfies the following transformation law
	\begin{equation}\label{transformation law}
		\sigma(z+\omega_j)=-e^{\eta_j(z+\frac{1}{2} \omega_j)} \sigma(z), \quad j=1,2 .
	\end{equation}
	Then the explicit expression of the common eigenfunction $y_1(z)$ can be written down by $\sigma(z)$. 
	
	\begin{lemma}\label{information}
		Let $y_1(z)$ be the common eigenfunction mentioned above. Then there exist $a_1,a_2\in E_\tau\setminus\{\pm p\}$ satisfying $a_1\neq a_2$ in $E_\tau$ and 
		\begin{equation}\label{compatibility condition}
			2\zeta(a_1-a_2)=\zeta(a_1+p)+\zeta(a_1-p)-\zeta(a_2+p)-\zeta(a_2-p),
		\end{equation}
		\begin{equation}\label{condition on A}
			A = \zeta(a_1+p)-\zeta(a_1-p)+\zeta(a_2+p)-\zeta(a_2-p)-2\zeta(2p),
		\end{equation}
		such that up to multiplying a non-zero constant,
		\begin{equation}\label{y1 exact}
			y_1(z)=y_{a_1,a_2}(z)\coloneqq e^{c(a_1,a_2)z}\frac{\sigma(z-a_1)\sigma(z-a_2)}{\sigma(z-p)\sigma(z+p)},
		\end{equation}
		with
		\begin{equation}\label{condition on c}
			c(a_1,a_2)= \frac{1}{2}[\zeta(a_1+p) + \zeta(a_1-p) + \zeta(a_2+p) + \zeta(a_2-p)].
		\end{equation}
Consequently, $y_2(z)=y_1(-z)=y_{-a_1,-a_2}(z)$.
	\end{lemma}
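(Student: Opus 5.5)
Since $y_1$ is a nonzero solution of GLE \eqref{generalized lame equation}, it is single-valued meromorphic on $\C$ (all local monodromies are trivial by Lemma \ref{apparent condition lemma} and \eqref{apparent condition}) and satisfies \eqref{y1 monodromy}. I first locate its poles and zeros modulo $\Lambda_\tau$. At $\pm p$ the local exponents are $-1$ and $2$. At every other point the equation is regular, so any zero there is simple. Hence $y_1$ has poles of order at most one, and only at $\pm p+\Lambda_\tau$.

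Next I show the poles really occur. Suppose instead $y_1$ were holomorphic at $p$, i.e. had exponent $2$ there. By the evenness of $I$, $y_1(-z)$ also solves the equation. I would then compare with $\Phi_e=y_1(z)y_1(-z)$ from Lemma \ref{dimension one}, which by \eqref{Phie exact expression} has double poles at both $\pm p$. If $y_1$ were holomorphic at $p$, then $\Phi_e$ would have order at least $-1$ at $p$, a contradiction. By symmetry the same holds at $-p$. So $y_1$ has simple poles at exactly $\pm p$ in $E_\tau$.

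Now consider $y_1'/y_1$. It is elliptic, because $y_1$ is multiplicatively quasi-periodic. The sum of its residues over a period cell is zero, so $y_1$ has exactly two zeros $a_1,a_2$ in $E_\tau$, counted with multiplicity, and $a_i\neq\pm p$. Choose representatives with $a_1+a_2\equiv 0$ modulo the appropriate lattice adjustment. The ratio
\[y_1(z)\,\frac{\sigma(z-p)\sigma(z+p)}{\sigma(z-a_1)\sigma(z-a_2)}\]
is entire and zero-free, and by \eqref{transformation law} it has multiplicative quasi-periods. A standard argument (its logarithmic derivative is an entire elliptic function, hence constant) shows it equals $Ce^{cz}$, which gives the form \eqref{y1 exact}.

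To determine $c$, $A$, and the compatibility relation, I use the Laurent expansion at $z=p$. The exponent-$(-1)$ Frobenius solution has $c_1/c_0=A/2$ (from the proof of Lemma \ref{apparent condition lemma}). This means the residue expansion of $y_1'/y_1$ at $p$ is $-\frac{1}{z-p}+\frac A2+O(z-p)$. Computing the same quantity from \eqref{y1 exact} gives
\[c+\zeta(p-a_1)+\zeta(p-a_2)-\zeta(2p)=\frac A2 .\]
Similarly, at $-p$ the expansion is $\frac{1}{z+p}$-type with constant term $-A/2$, since the residue of $I$ at $-p$ is $+A$, which gives
\[c+\zeta(-p-a_1)+\zeta(-p-a_2)+\zeta(2p)=-\frac A2 .\]
Adding and subtracting these two relations yields \eqref{condition on c} and \eqref{condition on A}, using the oddness of $\zeta$.

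For the compatibility condition, I impose that each $a_i$ is a simple zero at a regular point. This requires $y_1''/y_1=I$ to have no pole at $a_i$, which is equivalent to the residue of $(y_1'/y_1)^2+(y_1'/y_1)'$ vanishing. That in turn forces the constant term of $y_1'/y_1$ at $a_1$ to be zero:
\[c+\zeta(a_1-a_2)-\zeta(a_1-p)-\zeta(a_1+p)=0.\]
The analogous relation holds at $a_2$. Subtracting the two relations gives \eqref{compatibility condition}. Adding them reproduces \eqref{condition on c}.

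It remains to show $a_1\neq a_2$. If $a_1=a_2$, the zero would be double at a regular point, which is impossible. Finally, $y_2(z)=y_1(-z)$: by the oddness of $\sigma$ and $\zeta$, this is $y_{-a_1,-a_2}$, because $c(-a_1,-a_2)=-c(a_1,a_2)$.

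The main obstacle I expect is the lattice bookkeeping in choosing representatives $a_1,a_2$ so that the quasi-periodicity factors of $\sigma$ are absorbed consistently. The residue computations themselves are routine.
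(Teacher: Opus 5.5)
Your proposal is correct and follows essentially the same route as the paper. The steps match one for one:
\begin{itemize}
\item the simple poles at $\pm p$ are forced by the double poles of $\Phi_e=y_1(z)y_1(-z)$;
\item there are exactly two zeros, each simple because a nonzero solution cannot have $y=y'=0$ at a regular point, and the count comes from the elliptic logarithmic derivative;
\item you obtain the $\sigma$-quotient form;
\item you read off $c$, $A$ and the compatibility relation by comparing Laurent coefficients at $\pm p$ and at $a_1,a_2$. You spell out the computation at $a_1,a_2$, which the paper only calls ``similar.''
\end{itemize}

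The only thing to drop is the aside about choosing representatives with $a_1+a_2\equiv 0$. The class of $a_1+a_2$ in $E_\tau$ is fixed by $y_1$ and is in general nonzero, so such a choice is not available. It is also not needed: for arbitrary lifts $a_1,a_2\in\C$, the logarithmic derivative of your ratio is an entire elliptic function, and $c$ is then determined by the Laurent expansions. So the ``lattice bookkeeping'' you anticipate is not an actual obstacle.
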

	\begin{proof}
By \eqref{y1 monodromy}, we know that $y_1(z)$ is an elliptic function of the second kind. Its logarithmic derivative is elliptic, so the number of zeros equals the number of poles, counting multiplicities. Since $\Phi_e(z)=y_1(z)y_1(-z)$ has poles of order $2$  at both $p$ and $-p$, then $y_1$ has simple poles at both $p$ and $-p$. Every ordinary zero is simple by uniqueness for the initial-value problem, so $y_1(z)$ has exactly two distinct zeros, neither of which is $\pm p$. Then a classic theorem of elliptic function says that up to a constant, $y_1(z)$ can be written as
	$$y_1(z)=e^{cz}\frac{\sigma(z-a_1)\sigma(z-a_2)}{\sigma(z-p)\sigma(z+p)},$$
		with some $c\in\mathbb C$ and $a_1,a_2\in E_\tau\setminus\{\pm p\}$ satisfying that $a_1\neq a_2$ in $E_\tau$.
		
		Denote
		\begin{equation}\nonumber
			f(z)=\zeta(z-a_1)+\zeta(z-a_2)-\zeta(z-p)-\zeta(z+p),
		\end{equation}
		then we have
		\begin{equation}\nonumber
			\frac{y_1^\prime}{y_1}=c+f(z).
		\end{equation}
		Near $z=p$, we know
		\begin{equation}\nonumber
			f(z)=-\frac{1}{z-p}+\zeta(p-a_1)+\zeta(p-a_2)-\zeta(2p)+O(z-p).
		\end{equation}
		Since
		\begin{equation}\nonumber
			\left(\frac{y_1^{\prime}}{y_1}\right)^{\prime}+\left(\frac{y_1^{\prime}}{y_1}\right)^2=\frac{y_1^{\prime \prime}}{y_1}=I(z ; p, A, \tau),
		\end{equation}
		by comparing the coefficients of $(z-p)^{-1}$, we have
		\begin{equation}\nonumber
			2(c+\zeta(p-a_1)+\zeta(p-a_2)-\zeta(2p))=A.
		\end{equation}
		Near $z=-p$, by using a similar argument, we obtain that
		\begin{equation}\nonumber
			2(c+\zeta(-p-a_1)+\zeta(-p-a_2)-\zeta(-2p))=-A.
		\end{equation}
		Combining these two formulas and using $\zeta(-z)=-\zeta(z)$, we get \eqref{condition on A} and \eqref{condition on c}.
		
		By analyzing the behavior near $z=a_1$ and $z=a_2$ similarly, we can obtain the compatibility condition \eqref{compatibility condition} on $a_1$ and $a_2$.
		The proof is complete.
	\end{proof}
	Define $(r,s)=(r(A),s(A))\in\C^2$ by
	\begin{equation}\label{rs definition}
		-2\pi is\coloneqq c(a_1,a_2)-\eta_1(a_1+a_2),\quad 2\pi ir\coloneqq  c(a_1,a_2)\tau-\eta_2(a_1+a_2),
	\end{equation}
	which, by the Legendre relation $\tau\eta_1-\eta_2=2\pi i$, is equivalent to
	\begin{equation}\nonumber
		r+s\tau=a_1+a_2,\quad r\eta_1+s\eta_2=c(a_1,a_2).
	\end{equation}
	By applying the transformation law \eqref{transformation law} to \eqref{y1 exact}, we have
	\begin{equation}\nonumber
		y_{a_1,a_2}(z+1)=e^{c(a_1,a_2)-\eta_1a_1-\eta_1a_2}y_{a_1,a_2}(z)=e^{-2\pi is}y_{a_1,a_2}(z)
	\end{equation}
	and
	\begin{equation}\nonumber
		y_{a_1,a_2}(z+\tau)=e^{c(a_1,a_2)\tau-\eta_2a_1-\eta_2a_2}y_{a_1,a_2}(z)=e^{2\pi ir}y_{a_1,a_2}(z).
	\end{equation}
	Consequently, it follows from $y_{-a_1,-a_2}(z)=y_{a_1,a_2}(-z)$ that
	\begin{equation}\nonumber
		y_{-a_1,-a_2}(z+1)=e^{2\pi is}y_{-a_1,-a_2}(z)\quad\text{and}\quad 	y_{-a_1,-a_2}(z+\tau)=e^{-2\pi ir}y_{-a_1,-a_2}(z).
	\end{equation}
	There are two cases.
	
	\textbf{Case 1.} $Q(A)\neq0$. Then by Lemma \ref{Q(A)}, we know $y_{a_1,a_2}(z)$ and $y_{-a_1,-a_2}(z)$ are linearly independent, so it follows from \eqref{y1 exact} that $\{a_1,a_2\}\cap\{-a_1,-a_2\}=\emptyset$ (If $y_{a_1,a_2}(z)$ and $y_{-a_1,-a_2}(z)$ have a common zero, then the Wronskian $W=0$, a contradiction with $W^2=Q(A)\neq 0$). In particular, $a_j\notin E_{\tau}[2]$.  Furthermore, the monodromy matrices are given by
	\begin{equation}\nonumber
		N_1=N_1(A)=\begin{pmatrix} e^{-2\pi is}&0 \\ 0&e^{2\pi is}\end{pmatrix},\quad N_2=N_2(A)=\begin{pmatrix} e^{2\pi ir}&0 \\ 0&e^{-2\pi ir}\end{pmatrix}.
	\end{equation}
	We claim that $(r,s)\not\in\frac{1}2\mathbb{Z}^2=\{(r,s) : 2r, 2s\in\mathbb Z\}$. Suppose for contrary that $2r, 2s\in\mathbb{Z}$, then
	$N_k=\pm I_2$, so
	\begin{equation}\nonumber
		\Psi(z)\coloneqq y_{a_1,a_2}^2(z)+y_{-a_1,-a_2}^2(z)=y_{a_1,a_2}^2(z)+y_{a_1,a_2}^{2}(-z)
	\end{equation}
 is also an even elliptic solution of \eqref{second symmetric product equation}. By Lemma \ref{dimension one}, we know there exists a non-zero constant $c\in \C\setminus\{0\}$ such that $\Psi(z)=c\Phi_e(z)=cy_{a_1,a_2}(z)y_{-a_1,-a_2}(z)$, a contradiction with the fact that $y_{a_1,a_2}^2(z)$, $y_{-a_1,-a_2}^2(z)$ and $y_{a_1,a_2}(z)y_{-a_1,-a_2}(z)$ are linearly independent solutions of \eqref{second symmetric product equation}.
	
Therefore, in this case, we have $(r,s)\not\in\frac{1}{2}\mathbb{Z}^2$. In particular,
	\begin{equation}\nonumber
		(\operatorname{tr}N_1,\operatorname{tr}N_2)=(2\cos 2\pi s,2\cos 2\pi r)\not\in\{\pm(2,2),\pm(2,-2)\}.
	\end{equation}
	Clearly, $N_1(A)$ and $N_2(A)$ are both unitary matrices if and only if the corresponding monodromy data $(r,s)\in\R^2\setminus\frac{1}{2}\mathbb{Z}^2$.
	
	\textbf{Case 2.} $Q(A)=0$. Then $y_{a_1,a_2}(z)$ and $y_{-a_1,-a_2}(z)$ are linearly dependent, which, together with \eqref{y1 exact} and \eqref{y1 monodromy}-\eqref{y2 monodromy}, implies that $\{a_1,a_2\}=\{-a_1,-a_2\}$ in $E_{\tau}$ and $\varepsilon_1=e^{-2\pi is }\in\{\pm 1\}$,  $\varepsilon_2=e^{2\pi ir }\in\{\pm 1\}$, i.e., $(r,s)\in\frac{1}{2}\mathbb{Z}^2$.  We first determine the possible value of $A$. If $a_1=-a_1$ and $a_2=-a_2$ in $E_{\tau}$, there actually are $6$ possibilities:
	\begin{equation}\nonumber
		(a_1,a_2)=\left(\frac{\omega_j}{2},\frac{\omega_k}{2}\right),\quad 0\leq j<k\leq 3,
	\end{equation}
	with the corresponding
	\begin{equation}\nonumber
		A=A_{j,k}\coloneqq\zeta\left(\frac{\omega_j}{2}+p\right)-\zeta\left(\frac{\omega_j}{2}-p\right)+\zeta\left(\frac{\omega_k}{2}+p\right)-\zeta\left(\frac{\omega_k}{2}-p\right)-2\zeta(2p).
	\end{equation}
Here, we have used the fact that $a_1\neq a_2$. If $a_1=-a_2=a\in E_\tau\setminus E_\tau[2]$, by the compatibility condition \eqref{compatibility condition}, we have
	\begin{equation}\nonumber
		\zeta(2a)=\zeta(a+p)+\zeta(a-p).
	\end{equation}
	Define
	\begin{equation}\nonumber
		h(z)\coloneqq \zeta(2z)-\zeta(z+p)-\zeta(z-p).
	\end{equation}
	Since $\zeta(z+\omega_k)-\zeta(z)=\eta_k$, $k=1,2$, it is easy to see that $h(z)$ is an elliptic function with $6$ simple poles $\{\frac{\omega_l}{2}\}_{l=0}^3\cup\{\pm p\}$ in $E_\tau$. Notice that $h(z)$ is odd, so its six zeros, counted with multiplicity, can be denoted by $\{\pm b_1,\pm b_2,\pm b_3\}\subset E_\tau\setminus E_\tau[2]$. These pairs need not be distinct. Therefore, there are at most $3$ possibilities, listed with multiplicity:
	\begin{equation}\nonumber
		(a_1,a_2)=(b_k,-b_k)
	\end{equation}
	with the corresponding
	\begin{equation}\nonumber
		A=A_k\coloneqq 2[\zeta(b_k+p)-\zeta(b_k-p)-\zeta(2p)],\quad k=1,2,3.
	\end{equation}
	The above argument also shows
	\begin{equation}\label{eqzeros}
	Q(A)=0\quad\Longrightarrow \quad A\in\{A_{j,k} : 0\leq j<k\leq 3\}\cup\{A_k : k=1,2,3\}.
	\end{equation}
	
Notice that 
there can not be another common eigenfunction linearly independent of $y_{a_1,a_2}$. Otherwise both monodromy matrices $N_k$, whose eigenvalues are $\varepsilon_k\in\{\pm1\}$, would be $\pm I_2$. Let $v_e,v_o$ be the even and odd solutions normalized by $v_e(0)=v_o'(0)=1$ and $v_e'(0)=v_o(0)=0$. Then $v_e^2,v_o^2$ would be two linearly independent even elliptic solutions of \eqref{second symmetric product equation}, contradicting Lemma \ref{dimension one}. Therefore, up to a common conjugation,
	\begin{equation}\label{monodromy matrix 2}
		N_1(A)=\varepsilon_1\begin{pmatrix} 1&0 \\ 1&1\end{pmatrix},\quad N_2(A)=\varepsilon_2\begin{pmatrix} 1&0 \\ \mathcal{C}&1\end{pmatrix},
	\end{equation}
	where $\mathcal{C}\in \C\cup\{\infty\}$. Here, if $\mathcal{C}=\infty$, then \eqref{monodromy matrix 2} should be understood as
	\begin{equation}\nonumber
		N_1(A)=\varepsilon_1\begin{pmatrix} 1&0 \\ 0&1\end{pmatrix},\quad N_2(A)=\varepsilon_2\begin{pmatrix} 1&0 \\ 1&1\end{pmatrix}.
	\end{equation}
	In particular, 
	\begin{equation}\nonumber
		(\operatorname{tr}N_1,\operatorname{tr}N_2)=(2\varepsilon_1,2\varepsilon_2)\in\{\pm(2,2),\pm(2,-2)\}.
	\end{equation}
	Therefore, in this case, at least one of $N_1$ and $N_2$ has a nontrivial Jordan block and cannot be conjugated to a unitary matrix. 
	
\subsection{Conditional stability sets}	
	Define the Hill discriminants	
\begin{equation}\label{hilldis}
		\Delta_k(A)\coloneqq\frac12\operatorname{tr}N_k(A)=\begin{cases}\cos 2\pi s(A)\text{ if }k=1,\\ \cos2\pi r(A)\text{ if }k=2,\end{cases}\quad\forall A\in\mathbb{C}\setminus\{0\}.
	\end{equation}
Remark that $\Delta_k(A)$ is independent of the choice
of linearly independent solutions. We will prove in Theorem \ref{essential singularity} that $\Delta_k(A)$ has an essential singularity at $A=0$. Define
	\begin{equation}
		\sigma_k\coloneqq\Delta_k^{-1}([-1,1])=\{A\in\C\setminus\{0\}:\Delta_k(A)\in[-1,1]\},\quad k=1,2.\label{conditional stability set def}
	\end{equation}
Observe that
\begin{itemize}
\item When $A\in \sigma_1$, the corresponding $s(A)\in\mathbb{R}$, so $y_{1}(z)$ is bounded for $z\in q_0+\mathbb{R}$ as long as $q_0$ is chosen such that $q_0+\mathbb{R} \cap(\pm p+\Lambda_\tau)=\emptyset$. 
\item When $A\in \sigma_2$, the corresponding $r(A)\in\mathbb{R}$, so $y_{1}(z)$ is bounded for $z\in q_0+\tau\mathbb{R}$ as long as $q_0$ is chosen such that $q_0+\tau\mathbb{R} \cap(\pm p+\Lambda_\tau)=\emptyset$. 
\end{itemize}
	Thus, $\sigma_j$ can be considered as {\it the conditional stability sets} of GLE \eqref{generalized lame equation}.
Clearly the above arguments show that 
	\begin{equation} \label{eq: data5}
  \parbox{\dimexpr\linewidth-5em}{
 the monodromy of GLE \eqref{generalized lame equation} is unitary (i.e., the generators $N_1(A), N_2(A)\in SU(2)$ up to a common conjugation), if and only if $(r(A), s(A))\in \mathbb R^2\setminus\frac12\mathbb{Z}^2$, if and only if
$$
A\in (\sigma_1\cap\sigma_2)\setminus\{A:Q(A)=0\}.
$$
  }
\end{equation}

	\section{On the integrability theory of the curvature equation}\label{section 3}
	
	In this section, we establish the deep connection between GLE \eqref{generalized lame equation} and the following curvature equation:
	\begin{equation}\label{curvature equation}
		\Delta u+e^u=8\pi(\delta_p+\delta_{-p})\quad\text{on }E_\tau.
	\end{equation}
	The arguments of this section are quite standard, and are similar to those in \cite{ChaiLinWang2015, ChenLin2020, ChenFuLin2025, Kuo2026}. Here, we would like to give self-contained proofs for the reader's convenience. 

	\begin{theorem}\label{solution = unitary}
		Let $p\in E_\tau\setminus E_\tau[2]$ satisfy $\wp'(2p)\neq0$. Then the curvature equation \eqref{curvature equation} has solutions if and only if there exists $A\in\C\setminus\{0\}$ (and hence $B$ via \eqref{apparent condition}) such that the monodromy of GLE \eqref{generalized lame equation} is unitary. More precisely,
		\begin{itemize}
		\item[(1)] If $u(z)$ is a solution of \eqref{curvature equation}, then it belongs to a unique one-parameter scaling family of solutions $u_\beta(z)$, where $\beta>0$ is arbitrary, and $u_\beta(z)=u_\beta(-z)$ if and only if $\beta=1$. 
		\item[(2)] There is a one-to-one correspondence between those $A\in\C\setminus\{0\}$ such that the monodromy of GLE \eqref{generalized lame equation} is unitary and one-parameter scaling families of solutions (or equivalently, even solutions) of the curvature equation \eqref{curvature equation}.
		\end{itemize}
	\end{theorem}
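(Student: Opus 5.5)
The plan follows the standard Liouville integrability route of \cite{ChaiLinWang2015, ChenLin2020}. Let $u$ solve \eqref{curvature equation}. By Liouville's theorem there is a developing map $f$, multivalued and locally univalent away from $\pm p$, with
\[
u=\log\frac{8|f'|^2}{(1+|f|^2)^2}.
\]
The Schwarzian derivative $\{f,z\}=u_{zz}-\frac12u_z^2$ is an even elliptic function of $z$. This follows because $u_{zz}-\frac12 u_z^2$ is holomorphic away from $\pm p$, since $\partial_{\bar z}$ of it vanishes by the equation. The local behavior $u=4\log|z\mp p|+O(1)$ and the evenness argument used below then force $\{f,z\}=-2I(z;p,A,\tau)$ for some constants $A,B$.

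To justify the form of the potential, I would compute the Laurent expansion at $\pm p$. The double pole coefficient is determined by the angle $4\pi\cdot 3$, giving exponents $-1,2$ for the associated ODE $y''=Iy$. The residues at $p$ and $-p$ are a priori arbitrary, $-A$ and $A'$, and they sum to zero since the function is elliptic. This gives exactly $2(\wp(z+p)+\wp(z-p))+A(\zeta(z+p)-\zeta(z-p))+B$. We may write $f=y_1/y_2$ for a fundamental system of $y''=Iy$. Then $f$ has no logarithmic branching at $\pm p$, because $u$ is smooth across $\pm p$ up to the conic term, i.e.\ the local monodromy of $f$ is a Möbius map preserving the metric with angle $6\pi$, hence trivial. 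So $\pm p$ are apparent, and by Lemma \ref{apparent condition lemma} together with \eqref{pcondition} we get $A\neq0$ and \eqref{apparent condition}. Single-valuedness of $u$ forces the global monodromy of $f$ to lie in $PSU(2)$. Lifting gives $N_1,N_2\in SU(2)$ up to common conjugation, i.e.\ unitary monodromy.

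Conversely, given $A$ with unitary monodromy, we are in Case 1 of Section \ref{section 2} by \eqref{eq: data5}. Take $f=y_{a_1,a_2}/y_{-a_1,-a_2}$ with $N_1,N_2$ diagonal with unimodular entries. Then $u=\log\frac{8|f'|^2}{(1+|f|^2)^2}$ is doubly periodic. Since $f$ is meromorphic with $f'\neq0$ away from $\pm p$ and zeros of the Wronskian-type term only at $\pm p$ (with $W^2=Q(A)\neq0$), $u$ solves \eqref{curvature equation}. The scaling family is $f_\beta=\beta f$, giving $u_\beta$. The map $z\mapsto -z$ sends $f$ to $1/f$ because $y_{a_1,a_2}(-z)=y_{-a_1,-a_2}(z)$, and $u$ is invariant under $f\mapsto 1/f$. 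Hence $u_1$ is even, while $u_\beta(-z)=u_{1/\beta}(z)$.

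For part (1), I must show every solution $u$ lies in such a family and that $u_\beta$ is even only for $\beta=1$. Given $u$, its $A$ yields Case 1 (Case 2 is excluded since non-unitary Jordan blocks cannot preserve the metric). Since the monodromy is diagonal non-scalar, with $(r,s)\notin\frac12\mathbb Z^2$, the developing map must be $f=\beta e^{i\theta} y_{a_1,a_2}/y_{-a_1,-a_2}$ up to a unitary Möbius change. The phase is irrelevant, so $u=u_\beta$. Distinct $\beta$ give distinct $u$, as one checks via the ratio of $f$'s. For uniqueness of $A$ per family and part (2), note that $u_\beta$ and $u$ share the Schwarzian, hence the same $A$. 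Conversely, distinct $A$ give distinct Schwarzians and hence distinct families.

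The main obstacle I expect is rigorously showing that the residue and constant structure of $\{f,z\}$ is exactly $-2I$, together with the evenness step. One must check that $u_{zz}-\frac12u_z^2$ has no residue mismatch and that the residues at $p$ and $-p$ are opposite, which follows from ellipticity. Identifying $\beta\mapsto1/\beta$ under $z\mapsto-z$ also requires care with the normalization choice of $y_{\pm a_1,\pm a_2}$. I would handle this by fixing the normalization so that $y_{-a_1,-a_2}(z)=y_{a_1,a_2}(-z)$ exactly, as in Lemma \ref{information}.
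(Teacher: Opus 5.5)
Your proposal is correct and follows essentially the same route as the paper: the Liouville developing map, the Schwarzian equal to $-2I$ by ellipticity and the residue theorem, unitarity from single-valuedness of $u$, and conversely $f=y_{a_1,a_2}/y_{-a_1,-a_2}$ with $u_\beta(-z)=u_{1/\beta}(z)$; your normal-form claim for the developing map under diagonal non-scalar monodromy is the projective version of the paper's computation that $\overline{P}^TP$ is diagonal. Two small corrections: the Schwarzian is not known to be even a priori, since evenness is an output of the residue argument rather than an input, so the ``evenness step'' you worry about is not needed; and the expansion $u=4\log|z\mp p|+O(1)$ with smooth remainder, which you take for granted, requires the Brezis--Merle regularity argument that the paper cites.
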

	\begin{proof} We divide the proof into two steps.
	
	{\bf Step 1.} Let $A\in\C\setminus\{0\}$ such that the monodromy of GLE \eqref{generalized lame equation} is unitary. We prove the existence of a one-parameter scaling family of solutions $u_\beta(z)$, where $\beta>0$ is arbitrary.
	
Firstly, note that the local monodromy matrices at $\pm p$ can not be unitary matrices if solutions have logarithmic singularities, so there are no logarithmic singularities, namely	
$B$ is given by \eqref{apparent condition}. 

By the discussion in Section \ref{section 2}, there are linearly independent solutions $y_{a_1,a_2}(z)$, $y_{-a_1,-a_2}(z)$ of \eqref{generalized lame equation} and $(r,s)\in\R^2\setminus\frac{1}{2}\mathbb{Z}^2$ such that the monodromy matrices $N_1, N_2$ are given as follows:
$$\begin{pmatrix}
				y_{a_1,a_2}(z+1)\\y_{-a_1,-a_2}(z+1)
			\end{pmatrix}=\begin{pmatrix} e^{-2\pi is}&0 \\ 0&e^{2\pi is}\end{pmatrix}\begin{pmatrix}
				y_{a_1,a_2}(z)\\y_{-a_1,-a_2}(z)
			\end{pmatrix},$$
		\begin{equation}\label{monodromy in section 3}
			 \begin{pmatrix}
				y_{a_1,a_2}(z+\tau)\\y_{-a_1,-a_2}(z+\tau)
			\end{pmatrix}=\begin{pmatrix} e^{2\pi ir}&0 \\ 0&e^{-2\pi ir}\end{pmatrix}\begin{pmatrix}
				y_{a_1,a_2}(z)\\y_{-a_1,-a_2}(z)
			\end{pmatrix}.
		\end{equation}
		Define
		\begin{equation}\label{eqLiov}
			f(z)\coloneqq\frac{y_{a_1,a_2}(z)}{y_{-a_1,-a_2}(z)}=e^{2c(a_1,a_2)z}\frac{\sigma(z-a_1)\sigma(z-a_2)}{\sigma(z+a_1)\sigma(z+a_2)},
		\end{equation}
		where
		\begin{equation}\nonumber
			c(a_1,a_2)= \frac{1}{2}[\zeta(a_1+p) + \zeta(a_1-p) + \zeta(a_2+p) + \zeta(a_2-p)].
		\end{equation}
	Define $u_\beta(z)$ via the Liouville formula:
		\begin{equation}\label{u(z) expression}
			u_\beta(z)\coloneqq\log\frac{8|\beta f'(z)|^2}{(1+|\beta f(z)|^2)^2},\quad\forall \beta>0.
		\end{equation}
		We claim that $u_\beta(z)$ is a solution of \eqref{curvature equation}. By \eqref{monodromy in section 3}, we have
		\begin{equation}\nonumber
			f(z+1)=e^{-4\pi is}f(z),\quad	f(z+\tau)=e^{4\pi ir}f(z).
		\end{equation}
		Since $(r,s)\in\R^2$, we know $u(z)$ is doubly periodic and hence well-defined in $E_\tau$.
		Write
\begin{equation}\label{eqfpmp}f(z)=f(\pm p)+c_m(z\mp p)^{m}+O((z\mp p)^{m+1}),\end{equation} near $\pm p$ for some $m\geq 1$, $c_m\neq0$.
Since $f=y_{a_1,a_2}/y_{-a_1,-a_2}$ and $y_{\pm (a_1, a_2)}''=I(z;p,A,\tau)y_{\pm (a_1, a_2)}$, a direct computation gives
\begin{equation}\label{schwarz f}
			\{f;z\}\coloneqq\left(\frac{f''}{f'}\right)'-\frac{1}{2}\left(\frac{f''}{f'}\right)^2=-2I(z;p,A,\tau),
		\end{equation}
		where $\{f;z\}$ is called the Schwarzian derivative of $f(z)$.  Inserting \eqref{eqfpmp} into \eqref{schwarz f}, 
we get $m=3$, so it follows from \eqref{u(z) expression} that
		\begin{equation}\nonumber
			u_\beta(z)=4\log|z\mp p|+O(1)\quad\text{near }\pm p.
		\end{equation}
In particular, $f'(z)$ has exactly two double zeros $\pm p$.		
Notice from \eqref{eqLiov} that
\begin{equation}\frac{f'(z)}{f(z)}=\sum_{j=1}^2(\zeta(a_j+p) + \zeta(a_j-p)+\zeta(z-a_j)-\zeta(z+a_j))\end{equation}
is an elliptic function with totally four simple poles $\pm a_1, \pm a_2$ and two double zeros $\pm p$, so it has no other zeros, i.e., $f'(z)\neq 0$ for $z\in E_{\tau}\setminus\{\pm p\}$.
Then we see from \eqref{u(z) expression} that $u_\beta(z)\neq \infty$ for any $z\in E_{\tau}\setminus\{\pm p\}$, and a direct computation implies
\begin{align}
\Delta u_\beta+ e^{u_{\beta}}=0\quad\text{in}\quad E_{\tau}\setminus\{\pm p\}.
\end{align}
This proves that $u_\beta(z)$ is a one-parameter scaling family of solutions of \eqref{curvature equation}. Furthermore, since $f(-z)=\frac{1}{f(z)}$ but $\beta f(-z)\neq\frac{1}{\beta f(z)}$ for $\beta\neq 1$, a direct computation shows that $u_{\beta}(z)=u_\beta(-z)$ if and only if $\beta=1$.

{\bf Step 2.} Let $u(z)$ be a solution of \eqref{curvature equation}. We prove the existence of $A\in\C\setminus\{0\}$ such that the monodromy of GLE \eqref{generalized lame equation} is unitary.
Then by Step 1, there is a one-parameter scaling family of solutions $u_\beta(z)$ corresponding to this $A$. We prove the existence of $\beta_0>0$ such that $u(z)=u_{\beta_0}(z)$.

By the classical Liouville theorem, there is a locally meromorphic function $\tilde f(z)$ away from $\{\pm p\}$ such that
		\begin{equation}\label{liouville}
			u(z)=\log\frac{8|\tilde f'(z)|^2}{(1+|\tilde f(z)|^2)^2}.
		\end{equation}
		The map $\tilde f(z)$ is known as the developing map. By differentiating \eqref{liouville} directly, we have
		\begin{equation}\nonumber
			u_{zz}-\frac{1}{2}u_z^2=\{\tilde f;z\}=\left(\frac{\tilde f''}{\tilde f'}\right)'-\frac{1}{2}\left(\frac{\tilde f''}{\tilde f'}\right)^2.
		\end{equation}
	Notice that outside the singularities $\{\pm p\}$, it follows from $4u_{z\bar{z}}=\Delta u=-e^u$ that
		\begin{equation}\nonumber
			\left(u_{zz}-\frac{1}{2}u_z^2\right)_{\bar{z}}=(u_{z\bar{z}})_z-u_zu_{z\bar{z}}=-\frac{1}{4}(e^u)_z+\frac{1}{4}e^uu_z=0.
		\end{equation}
	Furthermore, it is easy to see that $v(z)\coloneqq u(z)-4\log|z\mp p|$ solves $\Delta v+|z\mp p|^4e^v=0$ near $\pm p$. The local exponential integrability in \cite[Corollary 1 and Remark 2]{BrezisMerle1991} gives $e^v\in L^q_{\mathrm{loc}}$ for every $q>1$. Elliptic regularity and bootstrapping then imply that $v$ is smooth at $z=\pm p$, thus
		\begin{equation}\nonumber
			u(z)=4\log|z\mp p|+O(1)\quad\text{near }\pm p,
		\end{equation}
and then
		\begin{equation}\nonumber
			u_{zz}-\frac{1}{2}u_z^2=\frac{-4}{(z\mp p)^2}+O\left(\frac{1}{z\mp p}\right)\quad\text{near }\pm p.
		\end{equation}
		Therefore, $u_{zz}-\frac{1}{2}u_z^2$ is an elliptic function with poles of order $2$ at $\{\pm p\}$. Since the sum of residues of any elliptic function is zero, we know there exists a constant $A=A(u)$ such that
		\begin{equation}\label{residue sum}
			2A=\operatorname{Res}_{z=p}\left(u_{zz}-\frac{1}{2}u_z^2\right)=-\operatorname{Res}_{z=-p}\left(u_{zz}-\frac{1}{2}u_z^2\right).
		\end{equation}
		Then, by Liouville theorem,  there exists a constant $B=B(u)$ such that
		\begin{equation}\nonumber
			\begin{aligned}
				u_{zz}-\frac{1}{2}u_z^2&
				=-2\left[ 2(\wp(z+p)+\wp(z-p)) + A(\zeta(z+p)-\zeta(z-p)) + B \right]\\
				&=-2I(z;p,A,\tau),
			\end{aligned}
		\end{equation}
		where $I(z;p,A,\tau)$ is given in \eqref{generalized lame equation}. We will explain below that $B$ is actually given by \eqref{apparent condition}.
		
		Since $\{\tilde f;z\}=-2I(z;p,A,\tau)$,  a classical result in complex analysis says that there are linearly independent solutions $\tilde y_1(z)$ and $\tilde y_2(z)$ of GLE \eqref{generalized lame equation} such that
		\begin{equation}\nonumber
			\tilde f(z)=\frac{\tilde y_1(z)}{\tilde y_2(z)}.
		\end{equation}
Define the Wronskian
		\begin{equation}\nonumber
			W\coloneqq \tilde y_1(z)\tilde y_2'(z)-\tilde y_1'(z)\tilde y_2(z),
		\end{equation}
		which is a non-zero constant. Inserting $\tilde f=\tilde y_1/\tilde y_2$ into the Liouville formula leads to
		\begin{equation}\nonumber
			2\sqrt{2}|W|e^{-\frac{1}{2}u(z)}=|\tilde y_1(z)|^2+|\tilde y_2(z)|^2.
		\end{equation}
		Since $u(z)$ is single-valued and doubly periodic, we immediately obtain that any monodromy matrix $N\in SL(2,\mathbb C)$ with respect to $(\tilde y_1, \tilde y_2)$ satisfy
		\begin{equation}\nonumber
			(\overline{\tilde y_1(z)}, \overline{\tilde y_2(z)})\overline{N}^{T}N\begin{pmatrix}\tilde y_1(z)\\ \tilde y_2(z)\end{pmatrix}=|\tilde y_1(z)|^2+|\tilde y_2(z)|^2,
		\end{equation}
so $\overline{N}^{T}N=I_2$, or equivalently, $N\in SU(2)$. Thus the monodromy group is a subgroup of $SU(2)$, namely the monodromy is unitary, and there are no logarithmic singularities. Consequently, Lemma \ref{apparent condition lemma} and $\wp'(2p)\neq0$ imply $A\neq0$ and give $B$ by \eqref{apparent condition}.
		
For this $A$, let $u_\beta(z)$ be the one-parameter scaling family of solutions constructed in Step 1.  We need to prove the existence of $\beta_0>0$ such that $u(z)=u_{\beta}(z)$.		
		
		Indeed, there is an invertible matrix $P=\bigl(\begin{smallmatrix}a & b\\
c & d\end{smallmatrix}\bigr)$ such that 
$$\begin{pmatrix}
\tilde y_{1}(z)\\
\tilde y_{2}(z)
\end{pmatrix}=\begin{pmatrix}
a&b\\
c&d
\end{pmatrix}\begin{pmatrix}
y_{a_1,a_2}(z)\\
y_{-a_1, -a_2}(z)
\end{pmatrix},$$
so
\begin{equation}\label{fft}\tilde f(z)=\frac{ay_{a_1,a_2}(z)+by_{-a_1,-a_2}(z)}{cy_{a_1,a_2}(z)+dy_{-a_1,-a_2}(z)}=\frac{a{f}(z)+b}{c{f}(z)+d},\end{equation}
and
$$P\begin{pmatrix}
e^{-2\pi is}&\\
&e^{2\pi is}
\end{pmatrix}P^{-1}=N_1,\quad P\begin{pmatrix}
e^{2\pi ir}&\\
&e^{-2\pi ir}
\end{pmatrix}P^{-1}=N_2.$$
Since $(r,s)\in\mathbb{R}^2\setminus \frac{1}{2}\mathbb{Z}^2$, without loss of generality, we may assume $r\in \mathbb{R}\setminus\frac12\mathbb Z$, i.e. $e^{2\pi ir}\neq \pm 1$. By $N_2\in SU(2)$, i.e. $\overline{N_2}^TN_2=I_2$, we obtain from $P\bigl(\begin{smallmatrix}e^{2\pi ir} & \\
 & e^{-2\pi ir}\end{smallmatrix}\bigr)=N_2P$ that
$$\overline{P}^TP\begin{pmatrix}
e^{2\pi ir}&\\
&e^{-2\pi ir}
\end{pmatrix}=\begin{pmatrix}
e^{2\pi ir}&\\
&e^{-2\pi ir}
\end{pmatrix}\overline{P}^TP,$$
which implies that $\overline{P}^TP$ is a diagonal matrix, i.e., $a\bar{b}+c\bar{d}=0$, so there is $\alpha\neq 0$ such that $(a,c)=\alpha (-\bar{d}, \bar{b})$. Consequently,
$$|ad-bc|=|\alpha| (|b|^2+|d|^2),\quad |a|^2+|c|^2=|\alpha|^2(|b|^2+|d|^2).$$
Let $\beta_0=|\alpha|>0$. From here and by inserting \eqref{fft} into \eqref{liouville}, we obtain 
{\allowdisplaybreaks
\begin{align*}
u(z)=&\log\frac{8|ad-bc|^2|{f}'(z)|^2}{(|a{f}(z)+b|^2+|c{f}(z)+d|^2)^2}\\
=&\log\frac{8|ad-bc|^2|{f}'(z)|^2}{(|b|^2+|d|^2+(|a|^2+|c|^2)|{f}(z)|^2)^2}\\
=&\log\frac{8|\beta_0 {f}'(z)|^2}{(1+|\beta_0 {f}(z)|^2)^2}=u_{\beta_0}(z).
\end{align*}
}%
The proof is complete.\end{proof}

To study the bubbling phenomenon of $u_\beta(z)$, we 
define a multiple Green function
	\begin{equation}\label{multiple green function definition}
		G_{2,p}(z_1,z_2)\coloneqq G(z_1-z_2)-\sum_{j=1}^2(G(z_j-p)+G(z_j+p)),
	\end{equation}
where $(z_1,z_2)\in(E_\tau\setminus\{\pm p\})^2$ with $z_1\neq z_2$. 

	\begin{remark}
		Note that if $(a_1,a_2)$ is a critical point of the multiple Green function $G_{2,p}(z_1,z_2)$, then $(-a_1,-a_2)$ is also a critical point. Here, we call a critical point $(a_1,a_2)$ is trivial if $\{a_1,a_2\}=\{-a_1,-a_2\}$ in $E_\tau$ (resp. non-trivial if $\{a_1,a_2\}\neq\{-a_1,-a_2\}$ in $E_\tau$).
	\end{remark}

	\begin{lemma}\label{solution - family of solutions}
		Let $p\in E_\tau\setminus E_\tau[2]$ satisfy $\wp'(2p)\neq0$. Let $A\in\C\setminus\{0\}$ such that the monodromy of GLE \eqref{generalized lame equation} is unitary. Let $u_\beta(z)$ be the one-parameter scaling family of solutions associated to this $A$ as constructed in Theorem \ref{solution = unitary}. Then
		\begin{itemize}
		\item[(1)] $u_\beta(z)$ blows up at $a_1$ and $a_2$ as $\beta\to+\infty$, and $u_\beta(z)$ blows up at $-a_1$ and $-a_2$ as $\beta\to0$.
		\item[(2)] $\pm(a_1, a_2)$ is a pair of non-trivial critical points of $G_{2,p}(z)$.
		\end{itemize}
	\end{lemma}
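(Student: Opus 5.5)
Part (1) follows from the explicit formula \eqref{u(z) expression}. With $f$ from \eqref{eqLiov}, we have $f(z)=e^{2c(a_1,a_2)z}\sigma(z-a_1)\sigma(z-a_2)/(\sigma(z+a_1)\sigma(z+a_2))$. Since we are in Case 1 with $\{a_1,a_2\}\cap\{-a_1,-a_2\}=\emptyset$, $f$ has simple zeros exactly at $a_1,a_2$ and simple poles exactly at $-a_1,-a_2$ in $E_\tau$. Away from the zeros of $f$ and from $\pm p$, $|f|$ is bounded below and $|f'|$ is bounded. Hence $e^{u_\beta}=8\beta^2|f'|^2/(1+\beta^2|f|^2)^2\le 8|f'|^2/(\beta^2|f|^4)\to0$ uniformly on compacts avoiding $a_1,a_2$ as $\beta\to+\infty$. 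At $z=a_j$ itself, $f(a_j)=0$ and $f'(a_j)\ne0$ (since $f'\ne0$ off $\pm p$), so $u_\beta(a_j)=\log(8\beta^2|f'(a_j)|^2)\to+\infty$. This gives blow-up exactly at $a_1,a_2$, and a local computation shows each has mass $8\pi$. For $\beta\to0$ we use the symmetry $f(-z)=1/f(z)$, which gives $u_\beta(-z)=u_{1/\beta}(z)$ directly from the formula. The blow-up set as $\beta\to0$ is therefore $\{-a_1,-a_2\}$.

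For part (2), I would show that $\nabla G_{2,p}$ vanishes at $(a_1,a_2)$ by expressing the critical point equations through $\zeta$. Recall $-\partial_z G(z)\cdot 4\pi... $; precisely, $-4\pi\,\partial_z G(z)=\zeta(z)-r\eta_1-s\eta_2$ where $z=r+s\tau$ with $r,s\in\R$. The conditions $\partial_{z_j}G_{2,p}=0$, $j=1,2$, then read
\[
\zeta(a_1-a_2)-\zeta(a_1-p)-\zeta(a_1+p)+ \text{(linear terms)}=0,
\]
\[
\zeta(a_2-a_1)-\zeta(a_2-p)-\zeta(a_2+p)+\text{(linear terms)}=0.
\]
The linear terms are $(r\eta_1+s\eta_2)$-type expressions in the real coordinates of the arguments. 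Their differences and sums reduce to the data of $a_1-a_2$ and of $a_1+a_2$, respectively.

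Taking the difference of the two equations gives the compatibility condition \eqref{compatibility condition}. For this step, the linear parts for the $a_1-a_2$ and $\pm p$ terms must cancel, which needs care with real coordinates (mod lattice choices). Taking the sum gives $\zeta(a_1+p)+\zeta(a_1-p)+\zeta(a_2+p)+\zeta(a_2-p)=2(r\eta_1+s\eta_2)$ with $r+s\tau=a_1+a_2$. By \eqref{condition on c}, this is $c(a_1,a_2)=r\eta_1+s\eta_2$. This identity is exactly the statement, via \eqref{rs definition}, that the monodromy data $(r(A),s(A))$ are real, which holds because the monodromy is unitary. So the equations are verified.

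An alternative route, and a cross-check, is the standard blow-up argument. Part (1) gives blow-up at $a_1,a_2$ with mass $8\pi$ each, and the known location condition for bubbling solutions (the Pohozaev identity) forces the blow-up points to be a critical point of $G_{2,p}$. Non-triviality holds because $\{a_1,a_2\}\cap\{-a_1,-a_2\}=\emptyset$ in Case 1, and $(-a_1,-a_2)$ is the companion critical point. I expect the main obstacle to be the bookkeeping of the real coordinates in the linear terms of $\nabla G$. That is, I must ensure the lattice representatives of $a_1\pm a_2$ and $a_j\pm p$ are chosen consistently so that the linear parts combine exactly to $r\eta_1+s\eta_2$ with the same $(r,s)$ as in \eqref{rs definition}. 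This works because changing representatives shifts $(r,s)$ by integers, compensated by the quasi-periodicity of $\zeta$.
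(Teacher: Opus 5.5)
Your part (1) is essentially the paper's argument: you use the explicit Liouville formula, and your identity $u_\beta(-z)=u_{1/\beta}(z)$ does the job of the paper's $g=1/f$. There is one small slip. $|f'|$ is not bounded near the poles $-a_1,-a_2$ of $f$. What your estimate actually uses is that $|f'|/|f|^2=|(1/f)'|$ is bounded on compact sets avoiding $a_1,a_2$, and that is true.

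For part (2), your main route is genuinely different from the paper's, and it is correct. The paper argues through bubbling. It quotes the blow-up theory of Bartolucci--Tarantello and Chen--Lin to get $u_\beta+8\pi(G(z-p)+G(z+p))-\frac{1}{|E_\tau|}\int_{E_\tau}u_\beta\to 8\pi\sum_j G(z-a_j)$. It then uses the Pohozaev identity (Chen--Lin's Estimate B) to locate the blow-up points; this is what you call the cross-check.

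You instead verify the critical-point equations directly from the standard Lin--Wang formula $-4\pi\partial_z G=\zeta-\ell$, where $\ell(r+s\tau)=r\eta_1+s\eta_2$ for $r,s\in\mathbb{R}$. The bookkeeping you worry about disappears. Since $\ell$ is $\mathbb{R}$-linear on $\mathbb{C}$ (and $\zeta-\ell$ is doubly periodic), $\ell(a_1-a_2)-\ell(a_1-p)-\ell(a_1+p)=-\ell(a_1+a_2)$ holds identically. Hence $\partial_{z_1}G_{2,p}(a_1,a_2)=0$ reads $\ell(a_1+a_2)=\zeta(a_1+p)+\zeta(a_1-p)-\zeta(a_1-a_2)$. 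The residue condition at $z=a_1$ in the proof of Lemma~\ref{information} says exactly that this right-hand side equals $c(a_1,a_2)$, and likewise at $a_2$. So you do not even need the sum/difference step. Finally, $c(a_1,a_2)=\ell(a_1+a_2)$ is equivalent to $(r(A),s(A))\in\mathbb{R}^2$, by uniqueness of the solution of $r+s\tau=a_1+a_2$, $r\eta_1+s\eta_2=c(a_1,a_2)$. That reality is what unitarity gives.

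Your route buys a short algebraic proof with no blow-up convergence, no Pohozaev identity and no mass computation. It also shows slightly more: for the common eigenfunction $y_{a_1,a_2}$, the critical-point equations of $G_{2,p}$ at $(a_1,a_2)$ are equivalent to reality of the monodromy data. The paper's route, in turn, explains conceptually why the blow-up points found in (1) must be critical points, which is the link between (1) and (2) it wants to exhibit.
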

	
	\begin{proof}
	By \eqref{eqLiov}-\eqref{u(z) expression}, we immediately obtain
		\begin{equation}\nonumber
			e^{u_\beta(a_k)}=\frac{8|\beta{f}'(a_k)|^2}{(1+|\beta{f}(a_k)|^2)^2}\to+\infty\text{ as }\beta\to+\infty,\quad k=1,2,
		\end{equation}
		and $e^{u_\beta(z)}\to 0$ uniformly for $z\in K\Subset E_{\tau}\setminus\{a_1, a_2\}$ as $\beta\to+\infty$.
		Thus, $a_1$ and $a_2$ are both blow-up points of $u_\beta(z)$ as $\beta\to+\infty$. 
		Then it follows from
\cite{BartolucciTarantello2002,CL-1} that 
\[u_\beta(z)+8\pi(G(z-p)+G(z+p))-\frac{\int_{E_{\tau}}u_\beta}{|E_{\tau}|}=\int_{E_\tau}G(z-y)e^{u_\beta(y)}dy\to 8\pi\sum_{j=1}^2G(z-a_j)\]
uniformly in $K\Subset E_{\tau}\setminus\{a_1,a_2\}$.Then by using the same argument as \cite[Estimate B in pages 739-740]{CL-1} via
the
well-known Pohozaev identity,  we see that 
\[
\nabla G(a_j-p)+\nabla G(a_j+p)=\nabla G(a_{j}-a_{k}),\quad \{j,k\}=\{1,2\},\]
so the blow-up point set $(a_{1},a_{2})$ is a
nontrivial critical point of $G_{2,p}$.

Similarly, by letting $g=1/f$, we have $e^{u_\beta(z)}=\frac{8|\beta^{-1}{g}'(z)|^2}{(1+|\beta^{-1}{g}(z)|^2)^2}$, so
\begin{equation}\nonumber
			e^{u_\beta(-a_k)}\to+\infty\text{ as }\beta\to0,\quad k=1,2,
		\end{equation}
		and $e^{u_\beta(z)}\to 0$ uniformly for $z\in K\Subset E_{\tau}\setminus\{-a_1, -a_2\}$ as $\beta\to0$.
Thus, $-a_1$ and $-a_2$ are both blow-up points of $u_\beta(z)$ as $\beta\to0$. The proof is complete.
	\end{proof}

	\section{On the singularity near $A=0$}\label{section 4}

Recall the Hill discriminant $\Delta_k(A)$ defined for $A\in\mathbb{C}\setminus\{0\}$ in \eqref{hilldis}. The main purpose of this section is to prove that $A=0$ is an essential singularity of $\Delta_k(A)$. This is the main difference from \cite{ChenFuLin2026}. Recall that we always assume $\wp'(2p)\neq 0$.

	\begin{theorem}\label{essential singularity}
		$\Delta_k(A)$ is holomorphic for $A\in\mathbb C\setminus\{0\}$ but has an essential singularity at $A=0$.
	\end{theorem}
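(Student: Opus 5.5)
Holomorphy on $\C\setminus\{0\}$ should come from standard ODE theory, and the essential singularity from reading off the exponential growth of the monodromy from an explicit common eigenfunction as $A\to0$.

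\textbf{Holomorphy.} For $A\neq0$, $B=B(A)$ defined by \eqref{apparent condition} is holomorphic in $A$. Hence the potential $I(z;p,A,\tau)$ depends holomorphically on $A\in\C\setminus\{0\}$, uniformly for $z$ on compact sets away from $\pm p+\Lambda_\tau$. Fix a base point $q_0\notin\pm p+\Lambda_\tau$. Let $Y_1,Y_2$ be the solutions with initial data $(1,0),(0,1)$ at $q_0$; they depend holomorphically on $A$. Since the singularities $\pm p$ are apparent for all such $A$, these solutions continue to single-valued meromorphic functions. We may compute $N_1(A)$ by continuing along a path from $q_0$ to $q_0+1$ avoiding $\pm p+\Lambda_\tau$. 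Then $\operatorname{tr}N_1(A)$ is given by values of $Y_j,Y_j'$ at $q_0+1$, which are holomorphic in $A$. The same holds for $N_2$. So $\Delta_k$ is holomorphic on $\C\setminus\{0\}$.

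\textbf{Essential singularity.} Since $\Delta_k$ is holomorphic on the punctured disk, it suffices to show that $\Delta_k$ is neither bounded near $0$ nor has a pole there. I would use Lemma~\ref{information}: whenever $Q(A)\neq0$, $\Delta_1=\cos2\pi s$ and $\Delta_2=\cos2\pi r$, where $r+s\tau=a_1+a_2$ and $r\eta_1+s\eta_2=c(a_1,a_2)$. The plan is to show that as $A\to0$ along suitable rays, $c(a_1,a_2)$ grows like $\kappa/A$ for some constant $\kappa\neq0$, while $a_1+a_2$ stays bounded. Then $r,s$ grow like $1/A$ times nonzero constants, using the Legendre relation to solve for $r,s$. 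Consequently $\cos2\pi s(A)\approx\cos(C/A+O(1))$, and this is exactly the kind of function with an essential singularity at $0$.

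To get the asymptotics I would use $\Phi_e$ from \eqref{Phie exact expression}. Its constant term contains $2\wp'(2p)/A$, which is large. Writing $\Phi_e$ in terms of $H$ as in Lemma~\ref{Q(A)}, the zeros $\pm a_1,\pm a_2$ of $\Phi_e$ satisfy $H^2-AH+\tfrac{A^2}{2}-3\wp(2p)+\tfrac{2\wp'(2p)}{A}=0$. Hence $H(a_j)\approx\pm\sqrt{-2\wp'(2p)/A}$, which is large, so $a_j$ tends to $\pm p$. More precisely, $a_j\mp p\sim\mp\lambda\sqrt{A}$ for a constant $\lambda\neq0$, since $H\approx\pm1/(z\mp p)$ near $\pm p$. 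One then selects the labeling consistent with \eqref{compatibility condition} and \eqref{condition on A}. The leading terms $\zeta(a_1-p)$ and $\zeta(a_2+p)$ in $c(a_1,a_2)$ are of order $A^{-1/2}$. The cancellation forced by \eqref{condition on A} (whose left side $A\to0$) should then show that $c$ is of order $A^{-1/2}$ or $A^{-1}$, with a nonzero leading coefficient.

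A cleaner alternative, if this bookkeeping gets messy, is to work with $W^2=Q(A)$. The leading behaviour $Q(A)\sim-32\wp'(2p)^3/A^3$ is read off from \eqref{QA}. Differentiating in $A$, the standard formula expressing $\partial_A$ of the quasi-periods $-2\pi i s$, $2\pi i r$ as a period integral of $\partial_A I\cdot\Phi_e/W$ gives $s'(A),r'(A)\sim cA^{-3/2}$. So $s,r\sim A^{-1/2}$ times nonzero constants. This again gives $\cos2\pi s(A)=\cos(C_1A^{-1/2}+O(1))$ with $C_1\neq0$. Along a ray where $C_1A^{-1/2}$ is real, $\Delta_1$ oscillates and takes the value $0$ infinitely often, so there is no pole and $\Delta_1\not\to\infty$. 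Along a ray where $C_1A^{-1/2}$ has large imaginary part, $|\Delta_1|\to\infty$, so $0$ is not removable. The same argument applies to $\Delta_2$.

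\textbf{Main obstacle.} The main difficulty is the precise asymptotics: showing that the leading coefficient $C_k$ is nonzero. This requires that the relevant period-type combination, after inverting the Legendre relation, does not vanish. I would first try the period-integral formula for $s'(A)$ and compute the residue contributions at $\pm p$, where $\Phi_e/W$ concentrates as $A\to0$. I expect $\wp'(2p)\neq0$ to enter exactly in making this coefficient nonzero.
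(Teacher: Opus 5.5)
Your holomorphy argument is fine and essentially coincides with the paper's. For the essential singularity you take a different route: asymptotics of $s(A)$ from the monodromy data, or from the variation formula $\partial_A\log\varepsilon_1=-J_1/W$ with $J_1=\int_{z_0}^{z_0+1}\Phi_e\,\partial_AI\,dz$. The skeleton of that route is sound, but the load-bearing step is not proved. You assert $s'(A)\sim cA^{-3/2}$ with $c\neq0$, and an $O(1)$ remainder after integration, and then defer both as the ``main obstacle''. Without them the proposal does not establish the theorem.

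The mechanism you propose for this step is also not the right one. $\Phi_e\,\partial_AI$ has zero residues at $\pm p$, and nothing concentrates there. On any fixed path from $z_0$ to $z_0+1$ one has, uniformly, $\Phi_e=2\wp'(2p)/A+O(1)$ and $\partial_AI=2\wp'(2p)/A^2+O(1)$. So the leading term $4\wp'(2p)^2/A^3$ of $J_1$ comes from the constant parts. In fact the computation of $J_1$ in Section 5 is purely algebraic and gives exactly $J_1=N(A)/(4A^3)$, with $N(0)=16\wp'(2p)^2$. Combined with $W^2=Q=R_1R_2/(4A^3)$, this yields on a slit punctured disk
\[
\partial_A\log\varepsilon_1(A)=-A^{-3/2}g(A),\qquad g\ \text{holomorphic at }0,\quad g(0)^2=-\tfrac12\wp'(2p)\neq0.
\]
No $A^{-1}$ term occurs, so integration gives $\log\varepsilon_1=\pm\mu+c_0+O(|A|^{1/2})$ with $\mu^2=-2\wp'(2p)/A$. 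Hence $\Delta_1=\cosh(\pm\mu+c_0+O(|A|^{1/2}))$, which is exactly the estimate your argument needs.

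Your first route starts from a wrong guess. $c(a_1,a_2)$, $r$ and $s$ grow like $A^{-1/2}$, not $1/A$, and there is no cancellation in $c$.
\begin{itemize}
\item The configuration with both $a_j$ near $p$, where the leading terms would cancel in $c$, is excluded by \eqref{compatibility condition}.
\item In the surviving configuration ($a_1$ near $p$, $a_2$ near $-p$), \eqref{condition on A} forces $\zeta(a_1-p)$ and $\zeta(a_2+p)$ to have the same leading term $\pm\mu$. They cancel in $A$ but add in $c$.
\item This gives $c=\pm\mu+O(1)$ and $a_1+a_2=O(1)$, hence $-2\pi i s=\pm\mu+O(1)$ by \eqref{rs definition}.
\end{itemize}

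For comparison, the paper avoids all of this. On $q_0+\R$ the potential is $\mu^2+R(x,\mu)$ with $R$ bounded. Variation of constants and Gronwall then give $\Delta_1=\cosh\mu+O(|\mu|^{-1}e^{|\operatorname{Re}\mu|})$ uniformly near $0$. Growth like $\tfrac12e^{\mu}$ as $\mu\to+\infty$ already excludes both a removable singularity and a pole, and the same uniform estimate is reused in Theorem~\ref{cap is empty near 0}. Your approach, once completed, gives a sharper expansion. The price is relying on $W^2=Q$, the $J_1$ identity, and branch bookkeeping around $A=0$, where $\sqrt Q$ changes sign and $\varepsilon_1\leftrightarrow\varepsilon_1^{-1}$.
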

	\begin{proof} First, we consider $k=1$.
	Fix a base point $q_0\in E_\tau\setminus\{\pm p\}$ such that any of $q_0+\R$ and $q_0+\tau\R$  has no intersection with $\{\pm p\}+\Lambda_\tau$.
Then $I(x+q_0;p,A,\tau)$ is analytic for $x\in\R$. Define
	\begin{align*}
		\tilde{I}_1(x;A)\coloneqq& I(x+q_0;p,A,\tau)\\
		=&2(\wp(x+q_0+p)+\wp(x+q_0-p)) + A(\zeta(x+q_0+p)-\zeta(x+q_0-p)) \\
		&- \frac{2\wp^\prime(2p)}{A}+ \frac{A^2}{4} - A\zeta(2p)  - \wp(2p),
	\end{align*}
	and let $c(x)=c(x;A),s(x)=s(x;A)$ be the solutions of
	\begin{equation}\nonumber
		y^{\prime\prime}(x)=\tilde{I}_1(x;A) y(x)
	\end{equation}	
	satisfying the initial data
	\begin{equation}\nonumber
		c(0;A)=s^{\prime}(0;A)=1,\quad c^{\prime}(0;A)=s(0;A)=0.
	\end{equation}
	Then
	$$\begin{pmatrix}c(x+1;A)\\ s(x+1;A)\end{pmatrix}=\begin{pmatrix}c(1;A)&c'(1;A)\\ s(1;A)&s'(1;A)\end{pmatrix}\begin{pmatrix}c(x;A)\\ s(x;A)\end{pmatrix}.$$
Thus, $N_1(A)$ is conjugate to $\begin{pmatrix}c(1;A)&c'(1;A)\\ s(1;A)&s'(1;A)\end{pmatrix}$,	which implies that
	\begin{equation}\label{trace}
		\Delta_1(A)=\frac{1}{2}(c(1;A)+s'(1;A))
	\end{equation}
		and so $\Delta_1(A)$ is holomorphic for $A\in\mathbb C\setminus\{0\}$.
	
To prove that $A=0$ is an essential singularity of $\Delta_1(A)$, we set
			\begin{equation}\nonumber
		\mu=\sqrt{-\frac{2\wp'(2p)}{A}}\to\infty,\quad\text{as }\mathbb C\setminus\{0\}\ni A\to0.
	\end{equation}
		Denote
		\begin{equation}\nonumber
			\tilde I_1(x;A)=\mu^2+R(x,\mu),
		\end{equation}
		then there exist constants $M>0$ and $\mu_0>0$ such that for all $|\mu|\ge\mu_0$,
		\begin{equation}\label{potential}
			|R(x,\mu)|\le M,\quad\forall x\in[0,1].
		\end{equation}
		Now we consider the following perturbed equation
		\begin{equation}\nonumber
			y''(x)-\mu^2 y(x)=R(x,\mu) y(x),\quad x\in[0,1].
		\end{equation}
		By variation of constants, any solution $y(z)$ of the above equation can be rewritten as
		\begin{equation}\label{varconst}
			y(x)=y(0)\cosh\mu x+\frac{y'(0)}{\mu}\sinh\mu x
			+\frac{1}{\mu}\int_0^x \sinh\mu(x-t)\,R(t,\mu)\,y(t)\,dt.
		\end{equation}
		Applying this to $c(x)$ and $s(x)$, we obtain that
		\begin{align}
			c(x)&=\cosh\mu x+\frac{1}{\mu}\int_0^x \sinh\mu(x-t)\,R(t,\mu)\,c(t)\,dt,\label{cint}\\
			s(x)&=\frac{\sinh\mu x}{\mu}+\frac{1}{\mu}\int_0^x \sinh\mu(x-t)\,R(t,\mu)\,s(t)\,dt,\label{sint}\\ 
			s'(x)&=\cosh\mu x+\int_0^x \cosh\mu(x-t)\,R(t,\mu)\,s(t)\,dt.\label{sdint}
		\end{align}
		Let $\delta(x)\coloneqq c(x)-\cosh\mu x$. From \eqref{cint} we have
		\begin{equation}\nonumber
			\delta(x)=\frac{1}{\mu}\int_0^x \sinh\mu(x-t)\,R(t,\mu)(\cosh\mu t+\delta(t))\,dt.
		\end{equation}
		Using $|\sinh\mu(x-t)|\leq e^{|\operatorname{Re}\mu|(x-t)}$, $|\cosh\mu t|\le e^{|\operatorname{Re}\mu| t}$, and $|R(x,\mu)|\le M$, we get
	\begin{equation}\nonumber
		\begin{aligned}
			|\delta(x)|&\le\frac{M}{|\mu|}\int_0^x e^{|\operatorname{Re}\mu|(x-t)}(e^{|\operatorname{Re}\mu| t}+|\delta(t)|)\,dt\\
			&\le\frac{M}{|\mu|}x e^{|\operatorname{Re}\mu| x}+\frac{M}{|\mu|}\int_0^x e^{|\operatorname{Re}\mu|(x-t)}|\delta(t)|\,dt.
		\end{aligned}
	\end{equation}
	Define $\phi(x)\coloneqq e^{-|\operatorname{Re}\mu| x}|\delta(x)|$. Multiplying the inequality by $e^{-|\operatorname{Re}\mu| x}$ yields that
	\begin{equation}\nonumber
		\phi(x)\le\frac{M}{|\mu|}x+\frac{M}{|\mu|}\int_0^x\phi(t)dt\le\frac{M}{|\mu|}+\frac{M}{|\mu|}\int_0^x\phi(t)\,dt.
	\end{equation}
	By Gronwall's inequality, we know when $|\mu|$ is large, then
	\begin{equation}\nonumber
		\phi(x)\le\frac{M}{|\mu|}\exp\left(\frac{M}{|\mu|}x\right)\le\frac{2M}{|\mu|}.
	\end{equation}
	Consequently,
	\begin{equation}\nonumber
		|\delta(x)|\le\frac{2M}{|\mu|}e^{|\operatorname{Re}\mu| x},\quad x\in[0,1].
	\end{equation}
	In particular,
	\begin{equation}\nonumber
		|c(1)-\cosh\mu|\leq\frac{2M}{|\mu|}e^{|\operatorname{Re}\mu| }.
	\end{equation}
	
	Similarly, let $\delta(x)=s(x)-\frac{\sinh\mu x}{\mu}$. From (\ref{sint}) we similarly obtain that
		\begin{equation}\nonumber
			|\delta(x)|\le\frac{M}{|\mu|}\int_0^x e^{|\operatorname{Re}\mu|(x-t)}\left(\frac{e^{|\operatorname{Re}\mu| t}}{|\mu|}+|\delta(t)|\right)\,dt
			\le\frac{M}{|\mu|^2}e^{|\operatorname{Re}\mu| x}+\frac{M}{|\mu|}\int_0^x e^{|\operatorname{Re}\mu|(x-t)}|\delta(t)|\,dt.
		\end{equation}
		Gronwall's inequality gives for large $|\mu|$ that
		\begin{equation}\nonumber
			e^{-|\operatorname{Re}\mu| x}|\delta(x)|\le\frac{2M}{|\mu|^2}.
		\end{equation}
		Therefore, we know
	\begin{equation}\nonumber
		|s(x)|\le\frac{2}{|\mu|}e^{|\operatorname{Re}\mu| x},\quad x\in[0,1],
	\end{equation}
	from which and \eqref{sdint} we get
	\begin{equation}\nonumber
		\begin{aligned}
			|s'(x)-\cosh\mu x|&	\le M\int_0^x e^{|\operatorname{Re}\mu|(x-t)}|s(t)|\,dt\\
			&\le M\int_0^x e^{|\operatorname{Re}\mu|(x-t)}\frac{2}{|\mu|}e^{|\operatorname{Re}\mu| t}\,dt\\
			&\leq\frac{2M}{|\mu|}e^{|\operatorname{Re}\mu| x}.
		\end{aligned}
	\end{equation}
	In particular, at $x=1$, we have
	\begin{equation}\nonumber
		|s'(1)-\cosh\mu|\le\frac{2M}{|\mu|}e^{|\operatorname{Re}\mu| }.
	\end{equation}
	Combining the above discussion, we know
	\begin{equation}\label{asymptotic 1}
		\Delta_1(A)=\frac{1}{2}(c(1;A)+s'(1;A))=\cosh \mu+ E_1(\mu),
	\end{equation}
	where
	\begin{equation}\label{error term 1}
		|E_1(\mu)|\leq\frac{2M}{|\mu|}e^{|\operatorname{Re}\mu| }.
	\end{equation}	
		
In particular, if we take 	
\begin{equation}\label{param}
			A=-\frac{2\wp'(2p)}{\mu^2},\quad \mu>0,\quad\mu\to+\infty,
		\end{equation}		
	then
		\begin{equation}\nonumber
			\Delta_1(A)
			=\frac12 e^{\mu}\left(1+O\left(\frac1\mu\right)\right)\sim\frac12\exp\left(\sqrt{-\frac{2\wp'(2p)}{A}}\right),
			\quad \mu\to+\infty.
		\end{equation}
Thus, $A=0$ is an essential singularity of $\Delta_1(A)$.

The case $k=2$ can be proved similarly. Recall that $q_0$ is a fixed base point such that $q_0+\tau\R$ has no intersection with $\{\pm p\}+\Lambda_\tau$, so $I(\tau x+q_0;p,A,\tau)$ is analytic for $x\in\R$. Define
	\begin{equation}\nonumber
		\tilde{I}_2(x;A)\coloneqq \tau^2 I(\tau x+q_0;p,A,\tau)
	\end{equation}
	and let $c_2(x)=c_2(x;A),s_2(x)=s_2(x;A)$ be the solutions of
	\begin{equation}\nonumber
		y^{\prime\prime}(x)=\tilde{I}_2(x;A) y(x)
	\end{equation}	
	satisfying the initial data
	\begin{equation}\nonumber
		c_2(0;A)=s_2^{\prime}(0;A)=1,\quad c_2^{\prime}(0;A)=s_2(0;A)=0.
	\end{equation}
Similarly, we have
	\begin{equation}\nonumber
		\Delta_2(A)=\frac{1}{2}(c_2(1;A)+s_2'(1;A)).
	\end{equation}
Set
	\begin{equation}\nonumber
		\mu=\sqrt{-\frac{2\wp'(2p)}{A}}\to\infty,\quad\text{as }\C\setminus\{0\}\ni A\to0,
	\end{equation}
	and denote similarly
	\begin{equation}\nonumber
		\tilde I_2(x;A)=\tau^2\mu^2+R_2(x,\mu),
	\end{equation}
	then there exist constants $M>0$ and $\mu_0>0$ such that for all $|\mu|\ge\mu_0$,
	\begin{equation}
		|R_2(x,\mu)|\le M,\quad\forall x\in[0,1].
	\end{equation}
Then a similar argument as above implies
	\begin{equation}\label{asymptotic 2}
		\Delta_2(A)=\frac{1}{2}(c_2(1;A)+s_2'(1;A))=\cosh \tau\mu+ E_2(\mu),
	\end{equation}
	where
	\begin{equation}\label{error term 2}
		|E_2(\mu)|\leq\frac{2M}{|\tau\mu|}e^{|\operatorname{Re}\tau\mu| }.
	\end{equation}
Consequently, taking $\mathbb R\ni \tau\mu\to+\infty$ shows that $A=0$ is also an essential singularity of $\Delta_2(A)$.
	\end{proof}

Therefore, the behaviors of $\Delta_1$ and $\Delta_2$ near $A=0$ may be very complicated. Here, we can prove that the conditional stability sets $\sigma_1$ and $\sigma_2$ are separated near the origin. 

	\begin{theorem}\label{cap is empty near 0} There exists $\varepsilon>0$ such that 
		\begin{equation}\nonumber
			\sigma_1\cap\sigma_2\cap\{A\in\C:0<|A|<\varepsilon\}=\emptyset.
		\end{equation}
	\end{theorem}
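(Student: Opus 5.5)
We will use the asymptotics \eqref{asymptotic 1}--\eqref{error term 2} from the proof of Theorem \ref{essential singularity}, with $\mu=\sqrt{-2\wp'(2p)/A}$. As $A\to0$, $|\mu|\to\infty$, and any branch of the square root may be fixed, since $\cosh$ is even and the error bounds only involve $|\operatorname{Re}\mu|$ and $|\operatorname{Re}\tau\mu|$. Write $\tau\mu=\tau_1\mu$ with $\tau\in\mathbb H$, and set $\mu=x+iy$. The strategy is to show that membership in $\sigma_1$ forces $|\operatorname{Re}\mu|$ to be bounded, membership in $\sigma_2$ forces $|\operatorname{Re}\tau\mu|$ to be bounded, and that these two strips meet only in a bounded set of $\mu$. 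That contradicts $|\mu|$ large.

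\textbf{Step 1: $\sigma_1$ lies in a strip.} Suppose $A\in\sigma_1$, so $|\Delta_1(A)|\le1$. Using $|\cosh\mu|\ge\sinh|\operatorname{Re}\mu|\ge\frac12e^{|\operatorname{Re}\mu|}-\frac12$ and \eqref{error term 1}, we get
\[
1\ge|\Delta_1(A)|\ge \tfrac12 e^{|\operatorname{Re}\mu|}-\tfrac12-\tfrac{2M}{|\mu|}e^{|\operatorname{Re}\mu|}.
\]
For $|\mu|\ge 8M$ this gives $\frac14e^{|\operatorname{Re}\mu|}\le\frac32$, so $|\operatorname{Re}\mu|\le\log6$.

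\textbf{Step 2: $\sigma_2$ lies in a strip.} The same argument with $\Delta_2$, \eqref{asymptotic 2} and \eqref{error term 2} shows that for $|\tau\mu|$ large, $A\in\sigma_2$ forces $|\operatorname{Re}(\tau\mu)|\le\log 6$.

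\textbf{Step 3: the strips meet in a bounded set.} Consider the real-linear map $\mu\mapsto(\operatorname{Re}\mu,\operatorname{Re}\tau\mu)$ on $\mathbb C\cong\mathbb R^2$. With $\tau=\tau_1+i\tau_2$ and $\mu=x+iy$, we have $\operatorname{Re}\tau\mu=\tau_1x-\tau_2y$. The determinant of this map is $-\tau_2\neq0$ because $\operatorname{Im}\tau>0$. Hence $|x|\le\log6$ and $|\tau_1x-\tau_2y|\le\log6$ imply $|y|\le(1+|\tau_1|)\log6/\tau_2$, so $|\mu|\le C(\tau)$.

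\textbf{Step 4: choice of $\varepsilon$.} Choose $\varepsilon$ so that $0<|A|<\varepsilon$ gives $|\mu|>\max\{8M,\,8M/|\tau|,\,\mu_0,\,C(\tau)\}$, where $\mu_0$ is the threshold from the proof of Theorem \ref{essential singularity}. Then Steps 1 and 2 apply to any $A\in\sigma_1\cap\sigma_2$ with $0<|A|<\varepsilon$, and Step 3 gives $|\mu|\le C(\tau)$, a contradiction. So the intersection is empty.

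The only delicate point is that the constant $M$ and the error estimates must hold uniformly over all complex $\mu$ with $|\mu|\ge\mu_0$, not just along a ray. This holds because $R(x,\mu)=\tilde I_1-\mu^2$ consists of terms bounded on $x\in[0,1]$ plus $A$-dependent terms like $A^2/4$ and $A\zeta(2p)$ that stay bounded as $A\to0$. The Gronwall argument in that proof already covers complex $\mu$, so this uniformity is available.
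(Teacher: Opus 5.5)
Your proof is correct and follows essentially the same route as the paper. Both arguments use the asymptotics $\Delta_1(A)=\cosh\mu+E_1(\mu)$ and $\Delta_2(A)=\cosh\tau\mu+E_2(\mu)$ from Theorem~\ref{essential singularity} to show that $|\operatorname{Re}\mu|$ stays bounded on $\sigma_1$, and then use $\operatorname{Im}\tau>0$ to conclude that $|\operatorname{Re}\tau\mu|$ must blow up, contradicting $|\Delta_2|\le 1$. The only difference is one of presentation: the paper argues by contradiction along a sequence $A_n\to 0$, whereas you make the strips $|\operatorname{Re}\mu|\le\log 6$ and $|\operatorname{Re}\tau\mu|\le\log 6$ explicit and bound their intersection directly.
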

	\begin{proof}
		Suppose for contrary that there exist $\{A_n\}_{n=1}^\infty\subset\C\setminus\{0\}$ satisfying that $A_n\to0$ and $A_n\in\sigma_1\cap\sigma_2$. Denote
		\begin{equation}\nonumber
			\mu_n=\sqrt{-\frac{2\wp'(2p)}{A_n}}\to\infty,\quad\text{as }n\to\infty.
		\end{equation}
		By \eqref{hilldis}, we know for any $n\in\mathbb{N}_+$,
		\begin{equation}\nonumber
			|\Delta_1(A_n)|\leq 1\quad\text{and}\quad |\Delta_2(A_n)|\leq 1.
		\end{equation}
		By \eqref{asymptotic 1} and \eqref{error term 1} and the triangle inequality, we have
		\begin{equation}\nonumber
			\begin{aligned}
				1\geq |\Delta_1(A_n)|&\geq|\cosh\mu_n|-\frac{2M}{|\mu_n|}e^{|\operatorname{Re}\mu_n |}\\
				&=\left|\frac{e^{\mu_n}+e^{-\mu_n}}{2}\right|-\frac{2M}{|\mu_n|}e^{|\operatorname{Re}\mu_n|}\\
				&\geq\frac{e^{|\operatorname{Re}\mu_n|}}{2}-\frac{e^{-|\operatorname{Re}\mu_n|}}{2}-\frac{2M}{|\mu_n|}e^{|\operatorname{Re}\mu_n|},
			\end{aligned}
		\end{equation}
		from which we know $|\operatorname{Re}\mu_n|$ is bounded for all $n\in\mathbb{N}_+$. This implies $|\operatorname{Im}\mu_n|\to +\infty$ and then
		$$|\operatorname{Re}\tau\mu_n|=|\operatorname{Im}\tau\operatorname{Im}\mu_n-\operatorname{Re}\tau\operatorname{Re}\mu_n|\to +\infty,$$
		because $\operatorname{Im}\tau>0$.
		Consequently, it follows from \eqref{asymptotic 2} and \eqref{error term 2} that
		\begin{equation}\nonumber
			\begin{aligned}
				1\geq |\Delta_2(A_n)|&\geq|\cosh\tau\mu_n|-\frac{2M}{|\tau\mu_n|}e^{|\operatorname{Re}\tau\mu_n|}\\
				&=\left|\frac{e^{\tau\mu_n}+e^{-\tau\mu_n}}{2}\right|-\frac{2M}{|\tau\mu_n|}e^{|\operatorname{Re}\tau\mu_n|}\\
				&\geq\frac{e^{|\operatorname{Re}\tau\mu_n|}}{2}-\frac{e^{-|\operatorname{Re}\tau\mu_n|}}{2}-\frac{2M}{|\tau\mu_n|}e^{|\operatorname{Re}\tau\mu_n|}\to+\infty,
			\end{aligned}
		\end{equation}
 a contradiction. The proof is complete.
	\end{proof}
	Theorem \ref{cap is empty near 0} tells us that, although $A=0$ is an essential singularity for $\Delta_1(A)$ and $\Delta_2(A)$, it is still possible to determine the structure of $\sigma_1\cap\sigma_2$.

	\section{Analysis on the conditional stability sets}\label{section 5}
	
	From now on, we always assume that $\tau=ib$ with $b>0$, that is, $E_\tau$ is a rectangular torus. Then it follows from the definition of $\wp(z)$ and $\zeta(z)$ that
	\begin{equation}\label{eqwpz}\overline{\wp(\bar{z})}=\wp(z),\quad\overline{\zeta(\bar{z})}=\zeta(z).\end{equation}
In particular, $\wp(z)\in\mathbb{R}$ if and only if $\wp(z)=\wp(\bar z)$, if and only if $z=\pm \bar z$ in $E_{\tau}$, which is equivalent to 
$$\pm z\in \Big(0,\frac{1}{2} \Big]\cup  \Big[\frac{1}{2},\frac{1+\tau}{2} \Big]\cup  \Big[\frac{1+\tau}{2},\frac{\tau}{2} \Big]\cup  \Big[\frac{\tau}{2}, 0\Big),\quad\operatorname{mod}\;\mathbb{Z}+\mathbb{Z}\tau.$$
Here
$[z_{1},z_{2}]\coloneqq\{(1-t)z_{1}+tz_{2} :\;0\leq t\leq1\}$, and $[z_1,z_2)$, $(z_1, z_2]$, $(z_1,z_2)$ are defined similarly. Furthermore, $\wp$ is one to
one from $(0,\frac{\tau}{2}]\cup \lbrack
\frac{\tau}{2},\frac{1+\tau}{2}]\cup \lbrack \frac{1+\tau}{2},\frac{1}{2}%
]\cup \lbrack \frac{1}{2},0)$ onto $(-\infty,+\infty)$, with
\[
\lim_{\,[\frac{1}{2},0)\ni z\rightarrow0}\wp(z)=+\infty,\quad \lim
_{(0,\frac{\tau}{2}]\ni z\rightarrow0}\wp(z)=-\infty.
\] 
Recall $e_k=\wp(\frac{\omega_k}{2})$ for $k=1,2,3$ and
\begin{align*}
 &\wp'(z)^2 = 4\wp(z)^3 - g_2\wp(z) - g_3=4\prod_{k=1}^3(\wp(z)-e_k), \end{align*}
 we have $e_{j}, g_2, g_3
\in \mathbb{R}$, $e_{2}<e_{3}<e_{1}$, $g_2>0$ and $e_{2}<0<e_{1}$. Moreover, $\wp^{\prime
}(z)\in \mathbb{R}$ if $z\in(0,\frac
{1}{2}]\cup[\frac{\tau}{2},\frac{1+\tau}{2}]$, and
$\wp^{\prime}(z)\in i\mathbb{R}$ if
$z\in(0,\frac{\tau}{2}]\cup[\frac{1}{2},\frac{1+\tau}{2}]$. Similarly,
$\zeta(z)\in \mathbb{R}$ for $z\in (0,1)$,
so $\eta_{1}\in \mathbb{R}$ and $\eta_2=\tau\eta_1-2\pi i\in i\mathbb{R}$. We will freely use these facts below. 

Besides, in the following proofs we will frequently use the following formulas for elliptic functions:
\begin{equation}\label{add1}\zeta(2z)-2\zeta(z)=\frac{\wp''(z)}{2\wp'(z)},\end{equation}
\begin{equation}\label{add2}\zeta(z+w)+\zeta(z-w)-2\zeta(z)=\frac{\wp'(z)}{\wp(z)-\wp(w)},\end{equation}
\begin{equation}\label{add3}\zeta(z+w)-\zeta(z)-\zeta(w)=\frac12\frac{\wp'(z)-\wp'(w)}{\wp(z)-\wp(w)},\end{equation}
\begin{equation}\label{add4}
			\sum_{k=1}^{3}\frac{\wp'(p)}{2(\wp(p)-e_k)}=\frac{\wp''(p)}{\wp'(p)},
		\end{equation}
\begin{equation}\label{add5}
		\wp(2z)=\frac{\wp''(z)^2-8\wp(z)\wp'(z)^2}{4\wp'(z)^2},
		\end{equation}
		\begin{equation}\label{add6}
		\wp'(2z)=-\frac{\wp''(z)^3-12\wp(z)\wp'(z)^2\wp''(z)+4\wp'(z)^4}{4\wp'(z)^3}.
		\end{equation}
	
In this section, we consider $p\in(0,1/4)$ and prove Theorem \ref{main result for 0,1/4}. By \eqref{eq: data5} and Theorem \ref{solution = unitary}, we need to count $$\#((\sigma_1\cap\sigma_2)\setminus\{A:Q(A)=0\}).$$
	As mentioned before, the previous approaches of analyzing $\sigma_1\cap\sigma_2$ in \cite{ ChenFuLin2021, ChenFuLin2026, ChenLin2020} can not work due to the presence of the essential singularity $A=0$. Here, we need to develop new ideas to overcome this difficulty. This section is the most novel part of this paper.

\subsection{Zeros of $Q(A)$}	We begin by investigating the zeros of $Q(A)$. 
	\begin{theorem}
		$Q(A)$ has exactly nine distinct real zeros $\{\beta_1,\beta_2,\beta_3,\pm\alpha_1,\pm\alpha_2,\pm\alpha_3\}$ satisfying that
		\begin{equation}\label{inequality}
			\beta_1<-\alpha_2<-\alpha_3<-\alpha_1<\beta_2<0<\alpha_1<\alpha_3<\alpha_2<\beta_3.
		\end{equation}
		Here,
		\begin{equation}\nonumber
			\alpha_k\coloneqq -\frac{\wp''(p)}{\wp'(p)}+\frac{\wp'(p)}{\wp(p)-e_k},\quad k=1,2,3,
		\end{equation}
		satisfies
		\begin{equation}\label{eqr1}
			R_1(A)\coloneqq A^6-12\wp(2p)A^4+(48\wp(2p)^2-4g_2)A^2-16\wp'(2p)^2=\prod_{j=1}^3(A^2-\alpha_j^2),
		\end{equation}
		and $\beta_1<\beta_2<\beta_3$ are three real zeros of the polynomial
		\begin{equation}\nonumber
			R_2(A)\coloneqq A^3-12\wp(2p)A+8\wp'(2p).
		\end{equation}
	\end{theorem}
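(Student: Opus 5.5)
The plan is to use $Q(A)=R_1(A)R_2(A)/(4A^3)$ from \eqref{QA}, treat $R_1$ and $R_2$ separately, and then obtain the interlacing \eqref{inequality} by a continuity argument in $p\in(0,\frac14)$. Throughout, write $x=\wp(p)$, $y=\wp'(p)$ and $\wp''(p)=6x^2-\frac{g_2}{2}$. For $p\in(0,\frac14)$ with $\tau=ib$ we have $x>e_1$ and $y<0$. Since $2p\in(0,\frac12)$, also $\wp(2p)>e_1$ and $\wp'(2p)<0$.

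\emph{Factorization of $R_1$.} Writing $y^2=4\prod_k(x-e_k)$ and $\wp''(p)=2\sum_{i<j}(x-e_i)(x-e_j)$, one gets for $\{i,j,k\}=\{1,2,3\}$
\[
\alpha_k=\frac{2(x-e_i)(x-e_j)-2(x-e_k)(2x-e_i-e_j)}{y}.
\]
Using \eqref{add5}--\eqref{add6}, $\wp(2p)$ and $\wp'(2p)$ are rational in $x$ and $y$. I will then verify the three symmetric-function identities
\[
\sum\alpha_k^2=12\wp(2p),\qquad \sum_{i<j}\alpha_i^2\alpha_j^2=48\wp(2p)^2-4g_2,\qquad (\alpha_1\alpha_2\alpha_3)^2=16\wp'(2p)^2 .
\]
Each reduces to a polynomial identity in $x,e_1,e_2,e_3$ modulo $e_1+e_2+e_3=0$. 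This is routine, and it gives \eqref{eqr1}.

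\emph{Reality, order and signs of the $\alpha_k$.} The $\alpha_k$ are real. Moreover
\[
\alpha_j-\alpha_k=\frac{y(e_k-e_j)}{(x-e_j)(x-e_k)},
\]
so $y<0$ and $e_2<e_3<e_1$ give $\alpha_1<\alpha_3<\alpha_2$. By \eqref{eqr1}, $\alpha_1\alpha_2\alpha_3\neq0$ because $\wp'(2p)\ne0$, so each $\alpha_k$ keeps a fixed sign on $(0,\frac14)$. As $p\to0^+$, $\alpha_k=\frac1p+O(p)>0$. Hence $0<\alpha_1<\alpha_3<\alpha_2$.

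\emph{The roots of $R_2$.} The discriminant of $R_2$ is $1728\bigl(4\wp(2p)^3-\wp'(2p)^2\bigr)=1728\bigl(g_2\wp(2p)+g_3\bigr)$. The map $t\mapsto g_2t+g_3$ is increasing because $g_2>0$, and at $t=e_1$ it equals $4e_1^3>0$. Since $\wp(2p)>e_1$, the discriminant is positive, so $R_2$ has three distinct real roots. Their sum is $0$ and their product is $-8\wp'(2p)>0$. Therefore two roots are negative and one is positive: $\beta_1<\beta_2<0<\beta_3$.

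\emph{Interlacing.} The nine numbers depend continuously on $p\in(0,\frac14)$. The order \eqref{inequality} can change only if some $\beta_l$ equals some $\pm\alpha_k$, i.e.\ only if the resultant $\operatorname{Res}(R_1,R_2)$ vanishes. I expect the main obstacle to be showing that
\[
\operatorname{Res}(R_1,R_2)=\prod_{k,\pm}R_2(\pm\alpha_k)
\]
never vanishes on $(0,\frac14)$. My first attempt will be to compute $R_2(\pm\alpha_k)$ directly with the formula for $\alpha_k$ above, factor each as a product of terms $(x-e_i)$, $y$ and a low-degree polynomial in $x$, and show the latter has no root for $x>\wp(\frac14)$. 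A fallback is to identify the $\pm\alpha_k$ and $\beta_l$ with the values $A_{j,k}$ and $A_k$ in \eqref{eqzeros}, and show that the corresponding configurations of $(a_1,a_2)$ cannot coincide.

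\emph{Fixing the order near $p=\frac14$.} Once non-vanishing is known, it suffices to check the order as $p\to\frac14^-$. There $\wp'(2p)\to0$ and $\wp(2p)\to e_1$. To first order, $\beta_2\approx\frac{2\wp'(2p)}{3e_1}$ and
\[
\alpha_1\approx\frac{4|\wp'(2p)|}{\sqrt{48e_1^2-4g_2}} .
\]
Hence $-\alpha_1<\beta_2$ is equivalent to $g_2>3e_1^2$. This holds because $g_2-3e_1^2=(e_2-e_3)^2>0$.

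For the remaining roots, $\beta_1\to-\sqrt{12e_1}$ and $\beta_3\to\sqrt{12e_1}$. Also $\alpha_{2,3}^2\to6e_1\pm2(e_3-e_2)$, with the plus sign for $\alpha_2$. Thus $\beta_3>\alpha_2$ and $\beta_1<-\alpha_2$ both amount to $3e_1>e_3-e_2$, i.e.\ $e_3<-\frac{e_2}{2}$. This follows from $e_1=-e_2-e_3>e_3$.

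Together with $\alpha_1<\alpha_3<\alpha_2$, this yields \eqref{inequality} near $p=\frac14$. By continuity and the non-vanishing of the resultant, \eqref{inequality} then holds on all of $(0,\frac14)$, and in particular the nine zeros are distinct.
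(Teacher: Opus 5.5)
\textbf{The gap.} The step you flag as the main obstacle is exactly where the proof is incomplete. Distinctness of the nine zeros and the whole chain \eqref{inequality} depend on $R_2(\pm\alpha_k)\neq0$, and your proposal only names strategies for this without carrying one out.

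The fallback via \eqref{eqzeros} would not close it. That statement is only an inclusion for the zero set of $Q$ and carries no multiplicity information. A coincidence $\beta_l=\pm\alpha_k$ would simply be a double zero of $Q$ at some $A_{j,k}$, which is perfectly consistent with \eqref{eqzeros}. So until $R_2(\pm\alpha_k)\neq0$ is proved, neither the resultant-plus-continuity argument nor the count of nine distinct zeros goes through.

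\textbf{The missing idea.} Express $R_2$ through the $\alpha$'s; your own ingredients already give what is needed. From $(\alpha_1\alpha_2\alpha_3)^2=16\wp'(2p)^2$, $\alpha_k>0$ and $\wp'(2p)<0$ you get the signed identity $\alpha_1\alpha_2\alpha_3=-4\wp'(2p)$. Together with $\sum_k\alpha_k^2=12\wp(2p)$ this gives
\[
R_2(A)=A^3-(\alpha_1^2+\alpha_2^2+\alpha_3^2)A-2\alpha_1\alpha_2\alpha_3,
\]
and hence, for $\{i,j,k\}=\{1,2,3\}$,
\[
R_2(\alpha_i)=-\alpha_i(\alpha_j+\alpha_k)^2<0,\qquad R_2(-\alpha_i)=\alpha_i(\alpha_j-\alpha_k)^2>0.
\]
Add $R_2(0)=8\wp'(2p)<0$ and the behaviour of $R_2$ at $\pm\infty$. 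The intermediate value theorem at fixed $p$ then puts one root in each of $(-\infty,-\alpha_2)$, $(-\alpha_1,0)$ and $(\alpha_2,+\infty)$. This is precisely \eqref{inequality}, and it is the paper's argument.

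With this, the continuity in $p$, the resultant and the $p\to\frac14^-$ expansion become unnecessary. The asymptotics you wrote there are correct as far as they go.

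\textbf{Smaller remarks.} Your route to $0<\alpha_1<\alpha_3<\alpha_2$ is a valid alternative to the paper's. You use non-vanishing of $\alpha_1\alpha_2\alpha_3$ on $(0,\frac14)$ plus $\alpha_k\sim 1/p$ as $p\to0^+$; the paper uses $\alpha_1+\alpha_3=-\wp'(p)/(\wp(p)-e_2)>0$. However, your displayed formula for $\alpha_j-\alpha_k$ has the wrong sign: it should read $y(e_j-e_k)/((x-e_j)(x-e_k))$. The ordering you state from it is nonetheless correct.
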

	\begin{proof}
		Suppose that $Q(A)=0$, then \eqref{eqzeros} implies that either \begin{equation}\nonumber
			A=A_{j,k}\coloneqq\zeta\left(\frac{\omega_j}{2}+p\right)-\zeta\left(\frac{\omega_j}{2}-p\right)+\zeta\left(\frac{\omega_k}{2}+p\right)-\zeta\left(\frac{\omega_k}{2}-p\right)-2\zeta(2p)
		\end{equation}
		for $0\leq j<k\leq 3$, or 
		\begin{equation}\nonumber
			A=A_k\coloneqq 2[\zeta(b_k+p)-\zeta(b_k-p)-\zeta(2p)]\quad\text{for }k=1,2,3.
		\end{equation}

	{\bf Step 1.} We prove that $\{A_{j,k}\}_{j<k}=\{\pm \alpha_k\}_{k=1}^3$ and they are pairwise distinct.
		
Assume first that $j=0$ and $k\in\{1,2,3\}$. Then by $\zeta(-z)=-\zeta(z)$ and \eqref{add1}-\eqref{add2}, we have
		\begin{equation}\nonumber
			\begin{aligned}
				A_{0,k}&=2\zeta(p)+\zeta\left(p+\frac{\omega_k}{2}\right)+\zeta\left(p-\frac{\omega_k}{2}\right)-2\zeta(2p)\\
				&=2(2\zeta(p)-\zeta(2p))+\frac{\wp'(p)}{\wp(p)-e_k}\\
				&=-\frac{\wp''(p)}{\wp'(p)}+\frac{\wp'(p)}{\wp(p)-e_k}
				\eqqcolon \alpha_k.
			\end{aligned}
		\end{equation}
		Obviously, $\alpha_k\in\R$ since $p\in(0,1/4)$, $g_2\in\R$ and $e_k\in \R$. Also, since $e_2<e_3<e_1<\wp(p)$ and $\wp'(p)<0$, we have $\alpha_1<\alpha_3<\alpha_2$. 
		
		Now, suppose that $\{i,j,k\}=\{1,2,3\}$ and $i<j$, then by \eqref{add1}, \eqref{add2} and \eqref{add4}, 
		\begin{equation}\nonumber
			\begin{aligned}
				A_{i,j}&=\zeta\left(p+\frac{\omega_i}{2}\right)+\zeta\left(p-\frac{\omega_i}{2}\right)+\zeta\left(p+\frac{\omega_j}{2}\right)+\zeta\left(p-\frac{\omega_j}{2}\right)-2\zeta(2p)\\
				&=2(2\zeta(p)-\zeta(2p))+\frac{\wp'(p)}{\wp(p)-e_i}+\frac{\wp'(p)}{\wp(p)-e_j}\\
				&=-\frac{\wp''(p)}{\wp'(p)}+\frac{\wp'(p)}{\wp(p)-e_i}+\frac{\wp'(p)}{\wp(p)-e_j}\\
				&=\frac{\wp''(p)}{\wp'(p)}-\frac{\wp'(p)}{\wp(p)-e_k}
				=-\alpha_k.
			\end{aligned}
		\end{equation}
Therefore, $\{A_{j,k}\}_{j<k}=\{\pm \alpha_k\}_{k=1}^3$. 

Next, we prove that they are pairwise distinct. For simplicity, denote
		\begin{equation}\nonumber
			\alpha_k=-\frac{\wp''(p)}{\wp'(p)}+\frac{\wp'(p)}{\wp(p)-e_k}=C+t_k,\quad k=1,2,3.
		\end{equation}
		Then, \eqref{add4} gives $t_1+t_2+t_3=-2C$. Besides,
		\begin{equation}\nonumber
			\begin{aligned}
				\alpha_1\alpha_2\alpha_3&=(C+t_1)(C+t_2)(C+t_3)\\
				&=-C^3+(t_1t_2+t_1t_3+t_2t_3)C+t_1t_2t_3.
			\end{aligned}
		\end{equation}
		Since $e_1+e_2+e_3=0$, it is easy to see that
		\begin{equation}\nonumber
			\begin{aligned}
				t_1t_2+t_1t_3+t_2t_3&=\frac{\wp'(p)^2(\wp(p)-e_1+\wp(p)-e_2+\wp(p)-e_3)}{(\wp(p)-e_1)(\wp(p)-e_2)(\wp(p)-e_3)}\\
				&=\frac{3\wp(p)\wp'(p)^2}{\frac{1}{4}\wp'(p)^2}=12\wp(p),
			\end{aligned}
		\end{equation}
		and
		\begin{equation}\nonumber
			t_1t_2t_3=\frac{\wp'(p)^3}{(\wp(p)-e_1)(\wp(p)-e_2)(\wp(p)-e_3)}=4\wp'(p).
		\end{equation}
		Therefore, 
		\begin{equation}\nonumber
			\begin{aligned}
				\alpha_1\alpha_2\alpha_3&=\frac{\wp''(p)^3}{\wp'(p)^3}-12\wp(p)\frac{\wp''(p)}{\wp'(p)}+4\wp'(p)\\
				&=\frac{\wp''(p)^3+4\wp'(p)^4-12\wp(p)\wp'(p)^2\wp''(p)}{\wp'(p)^3}\\
				&=-4\wp'(2p)\neq0,
			\end{aligned}
		\end{equation}
	where we have used \eqref{add6} to obtain the last equality. Hence, $\alpha_k\neq0$ for $k=1,2,3$. Obviously, $\alpha_i\neq\alpha_j$ if $i\neq j$. On the other hand, for $\{i,j,k\}=\{1,2,3\}$,
		\begin{equation}\nonumber
			\alpha_i+\alpha_j=2C+t_i+t_j=-t_k=-\frac{\wp'(p)}{\wp(p)-e_k}\neq0,
		\end{equation}
		and thus $\alpha_i\neq -\alpha_j$ for $i\neq j$. 
		
		{\bf Step 2.} We prove that $R_1(A)=\prod_{j=1}^3(A^2-\alpha_j^2)$.
		
	By \eqref{add4}-\eqref{add5}, we have
		\begin{equation}\nonumber
			\begin{aligned}
				\alpha_1^2+\alpha_2^2+\alpha_3^2&=(\alpha_1+\alpha_2+\alpha_3)^2-2(\alpha_1\alpha_2+\alpha_1\alpha_3+\alpha_2\alpha_3)\\
				&=C^2-6C^2-4(t_1+t_2+t_3)C-2t_1t_2-2t_1t_3-2t_2t_3\\
				&=3C^2-24\wp(p)=12\wp(2p).
			\end{aligned}
		\end{equation}
		In particular, 
		$$\alpha_1\alpha_2+\alpha_1\alpha_3+\alpha_2\alpha_3=12\wp(p)-C^2.$$
Consequently,
\[
\begin{aligned}
    \sum_{i<j}\alpha_i^2\alpha_j^2
    &=\bigg(\sum_{i<j}\alpha_i\alpha_j\bigg)^2
      -2\alpha_1\alpha_2\alpha_3\sum_{i=1}^3\alpha_i\\
    &=(12\wp(p)-C^2)^2+8C\wp'(2p).
\end{aligned}
\]
Since \eqref{add5}-\eqref{add6} give
		\begin{equation}\nonumber
			\wp(2p)=-2\wp(p)+\frac{\wp''(p)^2}{4\wp'(p)^2}=-2\wp(p)+\frac{1}{4}C^2,
		\end{equation}
		and
		\begin{equation}\nonumber
			\begin{aligned}
				2C^3-24\wp(p)C-8\wp'(p)&=-2\frac{\wp''(p)^3}{\wp'(p)^3}+24\frac{\wp(p)\wp''(p)}{\wp'(p)}-8\wp'(p)\\
				&=\frac{-2\wp''(p)^3+24\wp(p)\wp'(p)^2\wp''(p)-8\wp'(p)^4}{\wp'(p)^3}\\
				&=8\wp'(2p),
			\end{aligned}
		\end{equation}
it follows from 
$C\wp'(p)=-\wp''(p)=-6\wp(p)^2+g_2/2$ that
\[
\begin{aligned}
    \sum_{i<j}\alpha_i^2\alpha_j^2
    &=(12\wp(p)-C^2)^2
      +C(2C^3-24\wp(p)C-8\wp'(p))\\
    &=3C^4-48\wp(p)C^2+144\wp(p)^2
      -8\left(-6\wp(p)^2+\frac{g_2}{2}\right)\\
    &=48\left(\frac{C^2}{4}-2\wp(p)\right)^2-4g_2=48\wp(2p)^2-4g_2.
\end{aligned}
\]	
Together all the above formulas, we get \eqref{eqr1}, i.e., $R_1(A)=\prod_{j=1}^3(A^2-\alpha_j^2)$.

{\bf Step 3.} We prove that $R_2(A)=A^3-12\wp(2p)A+8\wp'(2p)$ has three distinct zeros $\beta_1<\beta_2<\beta_3$ and $\beta_i\neq \pm \alpha_k$ for any $i,k$.

Indeed, the coefficients of $R_2(A)$ are real and the discriminant of $R_2(A)=0$	is	
	\begin{equation}\nonumber
			\begin{aligned}
				\Delta&=-4(-12\wp(2p))^3-27(8\wp'(2p))^2
			=1728(4\wp(2p)^3-\wp'(2p)^2)\\
				&=1728(g_2\wp(2p)+g_3)
				=1728(g_2(\wp(2p)-e_1)+4e_1^3)>0,
			\end{aligned}
		\end{equation}
		where we have used $g_2>0$ and $\wp(2p)>e_1>0$ because $2p\in (0,\frac12)$.
		This implies that $R_2(A)$ has three distinct real zeros $\beta_1<\beta_2<\beta_3$. To prove $\beta_i\neq\pm\alpha_k$, 
		we note that
		\begin{equation}\nonumber
			R_2(A)=A^3-(\alpha_1^2+\alpha_2^2+\alpha_3^2)A-2\alpha_1\alpha_2\alpha_3.
		\end{equation}
		Therefore,
		\begin{equation}\nonumber
			\begin{aligned}
				R_2(\alpha_i)&=\alpha_i^3-(\alpha_i^2+\alpha_j^2+\alpha_k^2)\alpha_i-2\alpha_i\alpha_j\alpha_k\\
				&=-\alpha_i(\alpha_j+\alpha_k)^2\neq0,
			\end{aligned}
		\end{equation}
		and similarly, 
		\begin{equation}\nonumber
			R_2(-\alpha_i)=\alpha_i(\alpha_j-\alpha_k)^2\neq0.
		\end{equation}
		Summing up the above, we have proved that $Q(A)$ has exactly nine distinct real zeros.

{\bf Step 4.} We prove \eqref{inequality}. Recall that $\alpha_1<\alpha_3<\alpha_2$ and
		\begin{equation}\nonumber
			\alpha_1\alpha_2\alpha_3=-4\wp'(2p)>0,
		\end{equation}
		thus there are two possibilities: $0<\alpha_1<\alpha_3<\alpha_2$ or $\alpha_1<\alpha_3<0<\alpha_2$. Since
		\begin{equation}\nonumber
			\alpha_1+\alpha_3=C-\alpha_2=-t_2=-\frac{\wp'(p)}{\wp(p)-e_2}>0,
		\end{equation}
		we get $0<\alpha_1<\alpha_3<\alpha_2$. Thus, $R_2(\alpha_i)<0$ and $R_2(-\alpha_i)>0$ for any $i=1,2,3$. Since $R_2(0)=8\wp'(2p)<0$, $R_2(A)\to-\infty$ as $A\to -\infty$ and $R_2(A)\to +\infty$ as $A \to +\infty$, we finally obtain
		\begin{equation}\nonumber
			\beta_1<-\alpha_2<-\alpha_3<-\alpha_1<\beta_2<0<\alpha_1<\alpha_3<\alpha_2<\beta_3.
		\end{equation}		
The proof is complete.
	\end{proof}
	
	\subsection{The structure of $\sigma_2$}
	Now we want to determine $\sigma_2$ explicitly.
	
	\begin{theorem}\label{sigma 2 is real}
		The conditional stability set $\sigma_2$ lies on the real axis, i.e., $\sigma_2\subset\R$.
	\end{theorem}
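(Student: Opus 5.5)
The plan is to prove the contrapositive through an energy identity: if $A\in\sigma_2$ and $\operatorname{Im}A\neq0$, then integrating the GLE against the conjugate of the common eigenfunction along two different vertical lines gives two incompatible constraints.

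\textbf{Step 1: real coefficients on two vertical lines.} Let $A\in\sigma_2$, so $r=r(A)\in\R$. Let $y_1=y_{a_1,a_2}$ be the common eigenfunction of Lemma \ref{information}, with $y_1(z+\tau)=e^{2\pi i r}y_1(z)$ and $|e^{2\pi ir}|=1$. (If $Q(A)=0$, Case 2 of Section \ref{section 2} still gives a common eigenfunction with $\varepsilon_2=\pm1$.) Take the two vertical lines $L_c=\{c+iy:y\in\R\}$ with $c\in\{0,\frac12\}$. Since $p\in(0,\frac14)$ is real, neither line meets $\pm p+\Lambda_\tau$, so $y_1$ is smooth and not identically zero on them.

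Using \eqref{eqwpz} together with $\bar z=-z$ on $L_0$ and $\bar z=1-z$ on $L_{1/2}$, one checks that both $\wp(z\pm p)$ and $W(z):=\zeta(z+p)-\zeta(z-p)$ are real on $L_c$. For $L_{1/2}$ the quasi-period $\eta_1\in\R$ cancels in the difference. Hence on $L_c$,
\[
I(z;p,A,\tau)=V(z)+A\,W(z)+B(A),\qquad V,W \text{ real},
\]
where $B(A)$ is given by \eqref{apparent condition}.

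\textbf{Step 2: the energy identity.} Set $\phi(t)=y_1(c+ibt)$. Then $\phi''=-b^2 I\phi$ and $\phi(t+1)=e^{2\pi ir}\phi(t)$. Multiply by $\bar\phi$ and integrate over $[0,1]$. The boundary term $\phi'\bar\phi\big|_0^1$ vanishes because $|e^{2\pi i r}|=1$. This gives
\[
b^2\int_0^1 I\,|\phi|^2\,dt=\int_0^1|\phi'|^2\,dt\in\R .
\]
Taking imaginary parts, and using
\[
\operatorname{Im}B(A)=\operatorname{Im}A\Big(\tfrac{\operatorname{Re}A}{2}-\zeta(2p)+\tfrac{2\wp'(2p)}{|A|^2}\Big)=:\operatorname{Im}A\cdot K(A),
\]
where $\zeta(2p),\wp'(2p)\in\R$, we obtain
\[
\operatorname{Im}A\int_0^1\big(W(c+ibt)+K(A)\big)|\phi|^2\,dt=0,\qquad c=0,\tfrac12 .
\]
So if $\operatorname{Im}A\neq0$, then $-K(A)$ is a strictly positive weighted average of $W$ over $L_0$, and also a weighted average of $W$ over $L_{1/2}$.

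\textbf{Step 3: the two ranges of $W$ are disjoint (main step).} By \eqref{add2},
\[
W(z)=2\zeta(p)+\frac{\wp'(p)}{\wp(p)-\wp(z)},
\]
and here $\wp'(p)<0$ and $\wp(p)>e_1$.

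On $L_0$, $\wp(z)$ ranges over $(-\infty,e_2]$. Hence $W$ takes values in $\big[2\zeta(p)+\frac{\wp'(p)}{\wp(p)-e_2},\,2\zeta(p)\big)$.

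On $L_{1/2}$, $\wp(z)$ ranges over $[e_3,e_1]$. Hence $W$ takes values in $\big[2\zeta(p)+\frac{\wp'(p)}{\wp(p)-e_1},\,2\zeta(p)+\frac{\wp'(p)}{\wp(p)-e_3}\big]$.

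Since $e_2<e_3$ and $\wp'(p)<0$, the maximum over $L_{1/2}$ is strictly smaller than the minimum over $L_0$. A positive weighted average lies in the closed convex hull of the values, so the two averages cannot coincide. This contradicts Step 2, so $\operatorname{Im}A=0$, which proves $\sigma_2\subset\R$.

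The step needing the most care is Step 1: verifying the reality of $W$ and $V$ on $L_{1/2}$ through the quasi-periodicity of $\zeta$, and confirming that the lines avoid the poles and that $\phi\not\equiv0$. The decisive idea is Step 3, since a single line only gives a constraint with no definite sign.
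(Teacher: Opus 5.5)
Your proposal is correct and follows the paper's proof essentially step for step. Both use the same energy identity for $y_1$ along $i\R$ and $\frac12+i\R$, reduce the imaginary part to two equal positive weighted averages of $\zeta(z+p)-\zeta(z-p)$ (the paper subtracts $\zeta(2p)$ and calls the result $D$), and separate the two ranges via \eqref{add2}, using $\wp\le e_2$ on one line and $\wp\in[e_3,e_1]$ on the other. One small wording fix: on these lines $\wp(z+p)$ and $\wp(z-p)$ are complex conjugates of each other rather than individually real, so only their sum (which is all you use) is real.
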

	\begin{proof}
		Take $A\in\sigma_2$, we have $r=r(A)\in\R$ and hence 
		\begin{equation}\nonumber
			\left|e^{2\pi i r}\right|=e^{-2\pi \operatorname{Im}r}=1.
		\end{equation}
		Recalling $y_1(z)=y_{a_1,a_2}(z)$ in Lemma \ref{information}, we define
		\begin{equation}\nonumber
			Y(t)\coloneqq y_1\left(\frac{1}{2}+it\right),\quad X(t)\coloneqq y_1(it),\quad 0\leq t\leq b.
		\end{equation}
		From the definition of $y_1(z)$, we know both $X(t)$ and $Y(t)$ are smooth. Since $y_1(z+\tau)=e^{2\pi i r}y_1(z)$, we know
		\begin{equation}\nonumber
			Y(b)=e^{2\pi i r}Y(0),\quad X(b)=e^{2\pi i r}X(0).
		\end{equation}
		Using the chain's rule, we know
		\begin{equation}\nonumber
			Y'(t)=iy_1'\left(\frac{1}{2}+it\right),\quad X'(t)=iy_1'(it),
		\end{equation}
		and similarly,
		\begin{equation}\nonumber
			Y'(b)=e^{2\pi i r}Y'(0),\quad X'(b)=e^{2\pi i r}X'(0).
		\end{equation}
		Again, by the chain's rule, we have
		\begin{equation}\nonumber
			Y''(t)=-y_1''\left(\frac{1}{2}+it\right)=-I\left(\frac{1}{2}+it\right)Y(t).
		\end{equation}
		Integrating from $0$ to $b$ yields that
		\begin{equation}\nonumber
			\int_0^b Y''(t)\overline{Y(t)}\,dt=-\int_0^b I\left(\frac{1}{2}+it\right)|Y(t)|^2\,dt.
		\end{equation}
		Since
		\begin{equation}\nonumber
			\overline{Y(b)}=e^{-2\pi i r}\overline{Y(0)},
		\end{equation}
		the integration by part gives
		\begin{equation}\label{integral y}
			\int_0^b I\left(\frac{1}{2}+it\right)|Y(t)|^2\,dt=\int_0^b |Y'(t)|^2\,dt\in\R.
		\end{equation}
		Similarly, one get
		\begin{equation}\label{integral x}
			\int_0^b I(it)|X(t)|^2\,dt=\int_0^b |X'(t)|^2\,dt\in\R.
		\end{equation}
		For simplicity, denote
		\begin{equation}\nonumber
			V(z)=2(\wp(z+p)+\wp(z-p))-\wp(2p)
		\end{equation}
		and
		\begin{equation}\nonumber
			D(z)=\zeta(z+p)-\zeta(z-p)-\zeta(2p),
		\end{equation}
		then
		\begin{equation}\nonumber
			I(z)=V(z)+AD(z)+\frac{A^2}{4}-\frac{2\wp'(2p)}{A}.
		\end{equation}
		Since $p\in (0, \frac14)$,  It is easy to see from \eqref{eqwpz} that on the straight lines $z=i\R$ and $z=\frac{1}{2}+i\R$, there holds $V(z)\in\R$ and $D(z)\in \R$.

	Assume by contradiction that $A=\alpha+i\beta$ with $\beta\neq0$, then
		\begin{equation}\nonumber
			\operatorname{Im}\frac{A^2}{4}=\frac{\alpha\beta}{2}
		\end{equation}
		and
		\begin{equation}\nonumber
			-\frac{2\wp'(2p)}{A}=-\frac{2\wp'(2p)(\alpha-i\beta)}{|A|^2}.
		\end{equation}
		Therefore, on the straight lines $z=i\R$ and $z=\frac{1}{2}+i\R$, there holds
		\begin{equation}\nonumber
			\operatorname{Im}I(z)=\beta D(z)+\frac{\alpha\beta}{2}+\frac{2\wp'(2p)\beta}{|A|^2}=\beta\left(D(z)+\frac{\alpha}{2}+\frac{2\wp'(2p)}{|A|^2}\right).
		\end{equation}
		Taking the imaginary part on both sides of \eqref{integral y} simultaneously yields that
		\begin{equation}\nonumber
			0=\beta\int_0^b\left(D\left(\frac{1}{2}+it\right)+\frac{\alpha}{2}+\frac{2\wp'(2p)}{|A|^2}\right)|Y(t)|^2\,dt,
		\end{equation}
		from which we have
		\begin{equation}\nonumber
			\frac{\int_0^b D\left(\frac{1}{2}+it\right)|Y(t)|^2\,dt}{\int_0^b |Y(t)|^2\,dt}=-\frac{\alpha}{2}-\frac{2\wp'(2p)}{|A|^2}.
		\end{equation}
		By a same argument, we have
		\begin{equation}\nonumber
			\frac{\int_0^b D(it)|X(t)|^2\,dt}{\int_0^b |X(t)|^2\,dt}=-\frac{\alpha}{2}-\frac{2\wp'(2p)}{|A|^2}.
		\end{equation}
		Therefore, we know
		\begin{equation}\label{contradiction point}
			\frac{\int_0^b D(it)|X(t)|^2\,dt}{\int_0^b |X(t)|^2\,dt}=\frac{\int_0^b D\left(\frac{1}{2}+it\right)|Y(t)|^2\,dt}{\int_0^b |Y(t)|^2\,dt}.
		\end{equation}
		Using the addition formula \eqref{add2}, we know
		\begin{equation}\nonumber
			D(z)=2\zeta(p)-\zeta(2p)-\frac{\wp'(p)}{\wp(z)-\wp(p)}.
		\end{equation}
		Define
		\begin{equation}\nonumber
			H(u)\coloneqq 2\zeta(p)-\zeta(2p)-\frac{\wp'(p)}{u-\wp(p)},
		\end{equation}
		then $D(z)=H(\wp(z))$ and
		\begin{equation}\nonumber
			H'(u)=\frac{\wp'(p)}{(u-\wp(p))^2}<0,
		\end{equation}
		so $H(u)$ is strictly decreasing for $u<\wp(p)$.
		Notice that 
		\begin{equation}\nonumber
			\wp(it)\leq e_2<e_3\leq \wp\left(\frac{1}{2}+it\right)\leq e_1<\wp(p),\quad 0<t< b,
		\end{equation}
		then
		\begin{equation}\nonumber
			D(it)=H(\wp(it))\geq H(e_2),\quad 0<t<b,
		\end{equation}
		\begin{equation}\nonumber
			D\left(\frac{1}{2}+it\right)=H\left(\wp\left(\frac{1}{2}+it\right)\right)\leq H(e_3),\quad 0\leq t\leq b,
		\end{equation}
and $e_2<e_3<\wp(p)$ implies $H(e_2)>H(e_3)$.
		Therefore,
		\begin{equation}\nonumber
			\begin{aligned}
				\frac{\int_0^b D(it)|X(t)|^2\,dt}{\int_0^b |X(t)|^2\,dt}&\geq H(e_2)>H(e_3)\geq\frac{\int_0^b D\left(\frac{1}{2}+it\right)|Y(t)|^2\,dt}{\int_0^b |Y(t)|^2\,dt},
			\end{aligned}
		\end{equation}
		contradicting to \eqref{contradiction point}. This proves $A\in\mathbb R$.
	\end{proof}

	From now on, we will conduct a more detailed analysis of $\sigma_2$ and gradually reveal its structure.
	\begin{lemma}\label{lemma5-3}
		Suppose $A\in\R\backslash\{0\}$ satisfies $Q(A)>0$, then $A\in\sigma_2$.
	\end{lemma}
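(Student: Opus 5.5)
Since $A\in\R\setminus\{0\}$ and $Q(A)>0$, we are in Case~1 of Section~\ref{section 2}. The plan is to show that $r(A)\in\R$, which gives $\Delta_2(A)=\cos 2\pi r(A)\in[-1,1]$, i.e.\ $A\in\sigma_2$. The tool is an anti-holomorphic symmetry of GLE \eqref{generalized lame equation} for real $A$ on the rectangular torus.

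\textbf{Step 1: the symmetry.} First observe that for $A$ real and $\tau=ib$ the potential satisfies $\overline{I(-\bar z;p,A,\tau)}=I(z;p,A,\tau)$. To check this, use \eqref{eqwpz} and the parity of $\wp,\zeta$: conjugating $\wp(-\bar z\pm p)$ gives $\wp(z\mp p)$, while conjugating $\zeta(-\bar z+p)-\zeta(-\bar z-p)$ gives $\zeta(z+p)-\zeta(z-p)$. The constant term $B$ in \eqref{apparent condition} is real because $p\in(0,\frac14)$ makes $\zeta(2p),\wp(2p),\wp'(2p)$ real. Hence $\tilde y(z):=\overline{y_1(-\bar z)}$ is again a solution. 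Since $-\overline{(z+\tau)}=-\bar z+\tau$ and $-\overline{(z+1)}=-\bar z-1$, the function $\tilde y$ is a common eigenfunction of $N_1,N_2$ with eigenvalues $(\overline{\varepsilon_1}^{-1},\overline{\varepsilon_2})$.

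\textbf{Step 2: the two alternatives.} Because $Q(A)\neq0$, the only common eigenfunctions up to scaling are $y_1=y_{a_1,a_2}$ and $y_2=y_{-a_1,-a_2}$, with distinct eigenvalue pairs $(\varepsilon_1,\varepsilon_2)$ and $(\varepsilon_1^{-1},\varepsilon_2^{-1})$, where $(r,s)\notin\frac12\mathbb Z^2$. So there are two possibilities.
\begin{itemize}
\item[(a)] $\tilde y\propto y_2$. Then $\overline{\varepsilon_2}=\varepsilon_2^{-1}$, i.e.\ $|e^{2\pi i r}|=1$, so $r\in\R$ and we are done.
\item[(b)] $\tilde y=k\,y_1$ for some constant $k\neq0$. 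This case must be excluded.
\end{itemize}

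\textbf{Step 3: excluding case (b) via the sign of $W^2$.} This is the main point. On the line $i\R$ we have $-\bar z=z$, so case (b) reads $\overline{y_1(it)}=k\,y_1(it)$. Hence $y_1(it)=c\,X_1(t)$ with $X_1$ real-valued and $c$ a constant of modulus one. The same holds for $y_2(it)=y_1(-it)=c\,X_1(-t)$, with the same $c$.

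Next, $\Phi_e(it)=y_1(it)y_1(-it)=c^2X_1(t)X_1(-t)$. By \eqref{Phie exact expression}, $\Phi_e$ is real on $i\R$ when $A$ is real, since $\wp(it+p)+\wp(it-p)$ and $\zeta(it+p)-\zeta(it-p)$ are real there. Because $\Phi_e\not\equiv0$ on the line, this forces $c^2\in\R$.

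Finally, compute the Wronskian along the line using $\frac{d}{dz}=-i\frac{d}{dt}$:
\[
W=y_1y_2'-y_1'y_2=-i\,c^2\big(X_1(t)\tfrac{d}{dt}X_1(-t)-X_1'(t)X_1(-t)\big)\in i\R .
\]
Thus $Q(A)=W^2\le0$, contradicting $Q(A)>0$. So case (b) is impossible, case (a) holds, and $A\in\sigma_2$.

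The only delicate points are the sign and conjugation bookkeeping in Step~1, and the reality of $c^2$ in Step~3, which is supplied by the explicit formula \eqref{Phie exact expression}.
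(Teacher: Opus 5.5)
Your proof is correct, and it takes a genuinely different route from the paper. The paper works directly on the line $\frac12+i\R$. It uses $Q(A)>0$ to show that $\psi(t)=\Phi_e(\frac12+it)$ is real and zero-free there, since a zero $t_*$ would give $Q(A)=-\psi'(t_*)^2\le0$. It then uses that $W=\sqrt{Q(A)}$ is real, so $\frac{d}{dt}\log|Y(t)|=\frac{\psi'(t)}{2\psi(t)}$, and integrates over $[0,b]$ with $\psi(b)=\psi(0)$ to get $|e^{2\pi i r(A)}|=1$.

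You instead use the anti-holomorphic involution $z\mapsto-\bar z$. For real $A$ it preserves $\Lambda_\tau$ and the potential, so it permutes the two one-dimensional common eigenspaces spanned by $y_1$ and $y_2$. The sign of $W^2$, computed on the fixed line $i\R$, then decides whether the involution swaps these eigenspaces or fixes them, and only the swap is compatible with $Q(A)>0$.

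Your argument is more structural, and the same dichotomy gives more than the lemma:
\begin{itemize}
\item $Q(A)>0$ yields both $|\varepsilon_2|=1$ and $\varepsilon_1\in\R$. The paper proves the latter separately, by a real-ODE argument on $\R+\frac\tau2$, just before Lemma \ref{rule out 3}.
\item $Q(A)<0$ forces your case (b). This gives $\varepsilon_2\in\R$, as in the first half of Lemma \ref{negative Q outside sigma2}, and in addition $|\varepsilon_1|=1$.
\item It only needs $\Phi_e\not\equiv0$ on $i\R$, not that $\Phi_e$ is zero-free there.
\end{itemize}

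The price is two extra ingredients. First, you rely on the Case~1 classification of common eigenfunctions, which uses $(r,s)\notin\frac12\mathbb Z^2$. Second, you need the exact normalization $y_1y_2=\Phi_e$ from \eqref{Phie exact expression}, which is what makes $c^2$ real. This is the same normalization under which $W^2=Q(A)$ holds in Lemma \ref{Q(A)}, so it is legitimate. You can also avoid it by phrasing Step 3 scale-invariantly: in case (b), $W/\Phi_e(it)\in i\R$. The paper's computation needs neither ingredient and is shorter.
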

	\begin{proof}
		Recall that $\Phi_e(z)=y_1(z)y_2(z)$, $W=y_1y_2'-y_1'y_2$ and 
		\begin{equation}\nonumber
			Q(A) =\Phi_e^\prime(z)^2-2\Phi_e^{\prime\prime}(z)\Phi_e(z) + 4I(z)\Phi_e(z)^2.
		\end{equation}
		Since $A\in\R$ and $p\in (0, 1/4)$, it is easy to see from \eqref{Phie exact expression} that $\Phi_e(z)\in\R$ for $z=\frac{1}{2}+it$, $t\in\R$. Define
		\begin{equation}\nonumber
			\psi(t)\coloneqq\Phi_e\left(\frac{1}{2}+it\right)\in\R,\quad t\in \R,
		\end{equation}
		then
		$$
		\R\ni\psi'(t)=i\Phi_e'\left(\frac{1}{2}+it\right).
		$$
		If there exists a point $z_*=\frac{1}{2}+it_*$ such that $\Phi_e(z_*)=0$, then
		\begin{equation}\nonumber
			0<Q(A)=\Phi_e'(z_*)^2=-\psi'(t_*)^2\leq0,
		\end{equation}
		which is a contradiction. Thus, $\Phi_e(z)\neq 0$ on the straight line $z=\frac{1}{2}+i\R$. Take $Y(t)\coloneqq y_1\left(\frac{1}{2}+it\right)$, then $Y'(t)=iy_1'\left(\frac{1}{2}+it\right)$. Recall that
		\begin{equation}\nonumber
			\frac{y_1'}{y_1}=\frac{\Phi_e'-W}{2\Phi_e},
		\end{equation}
		we have
		\begin{equation}\nonumber
			\frac{Y'(t)}{Y(t)}=i\frac{-i\psi'(t)-W}{2\psi(t)}=\frac{\psi'(t)}{2\psi(t)}-\frac{iW}{2\psi(t)}.
		\end{equation}
		Since $W^2=Q(A)>0$, we know $W\in\R\backslash\{0\}$, from which we get
		\begin{equation}\nonumber
			\frac{d}{dt}\log|Y(t)|=\frac{\psi'(t)}{2\psi(t)}.
		\end{equation}
		Integrating on $[0,b]$, we have
		\begin{equation}\nonumber
			\log\frac{|Y(b)|}{|Y(0)|}=\frac{1}{2}\log\frac{|\psi(b)|}{|\psi(0)|}.
		\end{equation}
		Since $\Phi_e$ is even and elliptic, we know
		\begin{equation}\nonumber
			\psi(b)=\Phi_e\left(\frac{1}{2}+ib\right)=\Phi_e\left(\frac{1}{2}\right)=\psi(0).
		\end{equation}
		Then, we know $|Y(b)|=|Y(0)|$. Notice that $Y(b)=e^{2\pi i r(A)}Y(0)$, we get $\operatorname{Im} r(A)=0$, which means $A\in\sigma_2$.
	\end{proof}
	
	\begin{lemma}\label{negative Q outside sigma2}
Suppose that $A\in\R\setminus\{0\}$ and $Q(A)<0$.
Then $A\notin\sigma_2$.
\end{lemma}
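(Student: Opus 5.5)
The plan is to argue by contradiction and show that $A\in\sigma_2$ forces $W^2=Q(A)\ge 0$. The argument mirrors Lemma \ref{lemma5-3}, but replaces the direct integration of $\log|Y|$, which breaks down when $\Phi_e$ vanishes on the line, by a reality argument along the vertical line $z=\frac12+it$.

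\textbf{Setup.} Fix $A\in\R\setminus\{0\}$ with $Q(A)<0$. Since $Q(A)\neq0$, we are in Case 1 of Section \ref{section 2}: $y_1=y_{a_1,a_2}$ and $y_2=y_{-a_1,-a_2}$ are linearly independent, with multipliers $e^{\pm2\pi i r}$ under $z\mapsto z+\tau$. Suppose for contradiction that $A\in\sigma_2$, so $r=r(A)\in\R$. Because $A\in\R$ and $p\in(0,\frac14)$, the potential $I(\frac12+it)$ is real for $t\in\R$, by \eqref{eqwpz} and the decomposition $I=V+AD+\frac{A^2}{4}-\frac{2\wp'(2p)}{A}$ used in the proof of Theorem \ref{sigma 2 is real}. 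Set $Y_j(t)=y_j(\frac12+it)$. Then $Y_j''=-I(\frac12+it)Y_j$ is an ODE with real coefficients, so $\overline{Y_1}$ is again a solution. It is carried by $t\mapsto t+b$ to $e^{-2\pi i r}\overline{Y_1}$, because $r\in\R$.

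\textbf{Main case $r\notin\frac12\mathbb Z$.} Here the eigenvalue $e^{-2\pi ir}$ is simple, so its eigenspace is spanned by $Y_2$, and hence $\overline{Y_1}=cY_2$ for some $c\neq0$. Conjugating once more gives $Y_1=\bar c\,\overline{Y_2}$. Applying the same reasoning to $Y_2$ shows that $|c|$ is forced, and that $\psi(t)=\Phi_e(\frac12+it)=Y_1Y_2=|Y_1|^2/c$. Since $\psi$ is real (as in Lemma \ref{lemma5-3}) and not identically zero, $c\in\R$.

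Now compute the Wronskian along the line. Using $\frac{d}{dt}=i\frac{d}{dz}$,
\[
W=y_1y_2'-y_1'y_2=-i\,(Y_1Y_2'-Y_1'Y_2)=-\frac{i}{c}\bigl(Y_1\overline{Y_1}'-Y_1'\overline{Y_1}\bigr).
\]
The bracket equals $2i\operatorname{Im}(Y_1\overline{Y_1}')$, which is purely imaginary, and $c$ is real. Therefore $W\in\R$, so $Q(A)=W^2\ge0$, which contradicts $Q(A)<0$.

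\textbf{Exceptional case $r\in\frac12\mathbb Z$.} This is the step I expect to be the main obstacle, since the eigenspace argument fails when $N_2=\pm I_2$. Because $Q(A)\ne0$, we then have $s\notin\frac12\mathbb Z$. I would first try a perturbation argument. $Q<0$ holds on an open real interval $J\ni A$, and on $J$ the function $\Delta_2$ is real-analytic. It is also real-valued on $J$: for real $A$ the whole fundamental system may be taken real on the line, so the trace is real. Since $\Delta_2$ is non-constant, being unbounded near $0$ by Theorem \ref{essential singularity}, the equation $\Delta_2=\pm1$ has only isolated solutions in $J$. If $\Delta_2\le1$ in absolute value at $A$ and $\Delta_2(A)=\pm1$, I would show that points of $\sigma_2$ with $r\notin\frac12\mathbb Z$ accumulate at $A$ inside $J$, which the main case forbids.

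Since $\sigma_2\subset\R$ by Theorem \ref{sigma 2 is real}, the dangerous configuration is an isolated touching point where $\Delta_2$ has a local extremum with value $\pm1$. Ruling this out needs a separate local argument near $A$. For that I would use the real fundamental system on the line $\frac12+i\R$ directly: there $N_2=\pm I_2$ is a real matrix, and I would combine this with the explicit sign of $W^2=Q$ to derive the contradiction.
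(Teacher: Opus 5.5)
Your main case is correct but vacuous, and the gap lies in the case you leave open, which is in fact the whole lemma. For real $A$ with $Q(A)<0$, write $W=i\kappa$ with $\kappa\in\R$. Since $\pm a_j\neq\frac12$ in Case 1, we have $\psi(0)=\Phi_e(\frac12)\neq0$, and
\[
\frac{Y_1'(0)}{Y_1(0)}=\frac{\psi'(0)-iW}{2\psi(0)}=\frac{\psi'(0)+\kappa}{2\psi(0)}\in\R .
\]
Hence $Y_1/Y_1(0)$ solves a real ODE with real initial data, so it is real on the line, and $e^{2\pi i r(A)}=Y_1(b)/Y_1(0)\in\R$. So $A\in\sigma_2$ already forces $e^{2\pi i r(A)}=\pm1$, i.e.\ $r(A)\in\frac12\mathbb Z$. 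Your exceptional case is therefore the only one that can occur, and for it you give only a plan.

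That plan does not close. Your perturbation step explicitly leaves out a point where $\Delta_2|_{\R}$ touches $\pm1$ from outside. Your proposed local fix (real fundamental system, $N_2=\pm I_2$ real, sign of $W^2=Q$) gives no visible contradiction. At a single $A$ with $Q(A)<0$, both $Y_1$ and $Y_2$ are constant multiples of real functions on $\frac12+i\R$, and $W\in i\R$. All of this is compatible with $e^{2\pi i r(A)}=\pm1$, so the obstruction has to come from varying $A$.

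The missing idea is to vary $A$ in a complex neighbourhood rather than along $\R$. Take a small disc $D\ni A$ with $0\notin D$ and $Q<0$ on $D\cap\R$. On $D$, $\Delta_2$ is holomorphic and nonconstant (you already noted this). If $\Delta_2(A)=\pm1$, the open mapping theorem says $\Delta_2(D)$ contains a neighbourhood of $\pm1$, hence a value in $(-1,1)$ attained at some $\tilde A\in D$. Then:
\begin{itemize}
\item $\tilde A\in\sigma_2\subset\R$ by Theorem \ref{sigma 2 is real}, so $\tilde A\in D\cap\R$ and $Q(\tilde A)<0$;
\item since $\cos 2\pi r(\tilde A)\in(-1,1)$, we get $r(\tilde A)\in\R\setminus\frac12\mathbb Z$, which your main case excludes.
\end{itemize}
This is exactly why a tangential touching point cannot happen: in the imaginary direction it would produce non-real points of $\sigma_2$.

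The paper runs the same open-mapping argument on $e^{2\pi i r(A)}$. It picks a non-real unimodular value near $\pm1$ and uses the reality of the multiplier on $D\cap\R$ in place of your main case. With that step added, your proof would be complete.
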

\begin{proof}
Fix $A_0\in\R\setminus\{0\}$ with $Q(A_0)<0$.
We can choose $\varepsilon>0$ sufficiently small such that
\[
\begin{gathered}
    0\notin D\coloneqq\{A\in\C:|A-A_0|<\varepsilon\},\quad
    Q(A)<0,\quad\forall A\in D\cap\R,
\end{gathered}
\]
and $e^{2\pi i r(A)}$ is analytic for $A\in D$.
Choose a holomorphic square root $W$ on $D$ satisfying
$W(A)^2=Q(A)$ and $W(A_0)=i\sqrt{-Q(A_0)}.$ 
By the continuity, we have
\[
    W(A)=i\kappa(A),\quad \kappa(A)>0,
    \quad A\in D\cap\R.
\]

Let $A\in D\cap \mathbb R$.
In the following content, we write $\Phi_e(z)=\Phi_e(z;A)$ and $y_1(z)=y_1(z;A)$ to emphasize the dependence on $A$. Recall from Section \ref{section 2} that, since $Q(A)<0$, we have $\{a_1, a_2\}\cap\{-a_1, -a_2\}=\emptyset$ in $E_{\tau}$, so $\pm a_j\neq \frac12$ in $E_{\tau}$, namely $\Phi_e(\frac12; A)\neq 0$.
As in the proof of Lemma \ref{lemma5-3}, define
\begin{equation}\nonumber
			\psi(t;A)\coloneqq\Phi_e\left(\frac{1}{2}+it;A\right)\in\R,\quad Y(t;A)\coloneqq y_1\left(\frac{1}{2}+it;A\right),\quad t\in \R,
		\end{equation}
then $\psi(0; A)\neq 0$ and 
		\begin{equation}\nonumber
			\frac{Y'(0;A)}{Y(0;A)}=\frac{\psi'(0;A)-iW(A)}{2\psi(0;A)}=\frac{\psi'(0;A)+\kappa(A)}{2\psi(0;A)}\in \mathbb R.
		\end{equation}
Since $A\in D\cap\R$ and $p\in (0, 1/4)$, it is easy to see that $I(\frac12+it; A)\in\R$ for $t\in\R$. Then $y(t)\coloneqq\frac{Y(t;A)}{Y(0;A)}$ is a solution of
$$\begin{cases}y''(t)=-I(\frac12+it; A)y(t),\quad t\in\mathbb R\\
y(0)=1,\quad y'(0)\in\mathbb{R},\end{cases}$$
so $y(t)=\frac{Y(t;A)}{Y(0;A)}\in\mathbb R$ for all $t\in\mathbb R$. Consequently,
$$e^{2\pi i r(A)}=\frac{y_1(\frac12+ib; A)}{y_1(\frac12;A)}=\frac{Y(b; A)}{Y(0; A)}\in\mathbb{R}\setminus\{0\},\quad \forall A\in D\cap \mathbb R.$$

Suppose to the contrary that $A_0\in\sigma_2$. Then $|e^{2\pi i r(A_0)}|=1$ and so
\[
   e^{2\pi i r(A_0)}=\pm 1.
\]
If $e^{2\pi i r(A)}$ is constant for $A\in D$, we have $e^{2\pi i r(A)}=\pm 1$ and so $A\in \sigma_2$ for all $A\in D$,
which contradicts to the fact that $\sigma_2\subset\mathbb R$.
Thus $e^{2\pi i r(A)}$ is not a constant for $A\in D$. Since $e^{2\pi i r(A)}$ is analytic for $A\in D$, the open mapping theorem implies that there exists a constant
 $\delta>0$ such that
\[
    \{\mu\in\C:|\mu-e^{2\pi i r(A_0)}|<\delta\}
    \subset\{e^{2\pi i r(A)} : A\in D\}.
\]
Since $e^{2\pi i r(A_0)}=\pm 1$, there exists a constant $\mu\in\mathbb{C}\setminus\mathbb R$ such that $|\mu|=1$ and $|\mu-e^{2\pi i r(A_0)}|<\delta$. Then there is $\tilde A\in D$ such that
$e^{2\pi i r(\tilde A)}=\mu$. However,
\[
\begin{aligned}
    e^{2\pi i r(\tilde A)}\notin\R
    &\quad\Longrightarrow\quad \tilde A\notin\R,\\
    |e^{2\pi i r(\tilde A)}|=1
    &\quad\Longrightarrow\quad \tilde A\in\sigma_2\subset\R.
\end{aligned}
\]
Hence, we have $A_0\notin\sigma_2$. The proof is complete.
\end{proof}

Denote $$\mathcal{Z}=\{A : Q(A)=0\}=\{\beta_1,\beta_2,\beta_3,\pm\alpha_1,\pm\alpha_2,\pm\alpha_3\}.$$
Then the argument in Section \ref{section 2} gives
$\mathcal Z\subset\sigma_1\cap\sigma_2$.
Together with the preceding two lemmas, this proves the following corollary.

\begin{corollary}\label{six intervals}
		$\sigma_2=\{A\in\R\backslash\{0\}:Q(A)\geq0\}$. More concretely, $\sigma_2=\mathcal Z\cup\bigcup_{j=1}^6 I_j$, where
		\begin{equation}\nonumber
			\begin{aligned}
				&I_1=(-\infty,\beta_1),\quad I_2=(-\alpha_2,-\alpha_3),\quad I_3=(-\alpha_1,\beta_2),\\
				&I_4=(0,\alpha_1),\quad I_5=(\alpha_3,\alpha_2),\quad I_6=(\beta_3,+\infty).
			\end{aligned}
		\end{equation}
	\end{corollary}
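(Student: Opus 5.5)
The corollary follows by combining Theorem \ref{sigma 2 is real}, Lemmas \ref{lemma5-3} and \ref{negative Q outside sigma2}, and the inclusion $\mathcal Z\subset\sigma_2$. First, Theorem \ref{sigma 2 is real} gives $\sigma_2\subset\R\setminus\{0\}$; recall that $A=0$ is excluded by definition, since $\Delta_2$ is only defined on $\C\setminus\{0\}$. For real nonzero $A$ there are three mutually exclusive possibilities.
\begin{itemize}
\item If $Q(A)>0$, then $A\in\sigma_2$ by Lemma \ref{lemma5-3}.
\item If $Q(A)<0$, then $A\notin\sigma_2$ by Lemma \ref{negative Q outside sigma2}.
\item If $Q(A)=0$, then $A\in\mathcal Z$. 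Case 2 of Section \ref{section 2} gives $e^{2\pi i r(A)}=\pm1$, hence $\Delta_2(A)=\pm1\in[-1,1]$, so $A\in\sigma_2$.
\end{itemize}
Together these give $\sigma_2=\{A\in\R\setminus\{0\}:Q(A)\ge0\}$.

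For the concrete description, determine the sign of $Q$ on $\R\setminus\{0\}$ from the factorization
\[
Q(A)=\frac{R_2(A)R_1(A)}{4A^3},\qquad R_1(A)=\prod_{j=1}^3(A^2-\alpha_j^2).
\]
The nine zeros are simple and ordered as in \eqref{inequality}, and $A^3$ changes sign only at $0$. Hence $Q$ alternates sign across each of the nine zeros and across the pole at $0$, so it suffices to find the sign on one interval. For $A\to+\infty$, the leading behaviour $A^3\cdot A^6/(4A^3)>0$ shows $Q>0$ on $(\beta_3,+\infty)$. Alternating leftward from there:
\begin{itemize}
\item $Q<0$ on $(\alpha_2,\beta_3)$ and $Q>0$ on $(\alpha_3,\alpha_2)$;
\item $Q<0$ on $(\alpha_1,\alpha_3)$ and $Q>0$ on $(0,\alpha_1)$;
\item crossing the pole at $0$ (odd order $3$), $Q<0$ on $(\beta_2,0)$;
\item $Q>0$ on $(-\alpha_1,\beta_2)$ and $Q<0$ on $(-\alpha_3,-\alpha_1)$;
\item $Q>0$ on $(-\alpha_2,-\alpha_3)$ and $Q<0$ on $(\beta_1,-\alpha_2)$;
\item finally $Q>0$ on $(-\infty,\beta_1)$.
\end{itemize}
As a check, for $A\to-\infty$ we get $Q\sim A^6/4>0$, which is consistent.

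The positive intervals are exactly $I_1,\dots,I_6$, and adding the zero set $\mathcal Z$ gives the stated decomposition. This step is routine. The only point needing care is the simplicity of all zeros and the odd order of the pole, so that the sign genuinely flips at each of these points; both are supplied by the preceding theorem.
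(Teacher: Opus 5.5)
Your proof is correct and is essentially the paper's argument. The paper likewise obtains the corollary by combining $\sigma_2\subset\R$ (Theorem~\ref{sigma 2 is real}), Lemmas~\ref{lemma5-3} and~\ref{negative Q outside sigma2}, and the inclusion $\mathcal Z\subset\sigma_2$ from Case 2 of Section~\ref{section 2}; your sign analysis of $Q=R_1R_2/(4A^3)$ across its nine simple zeros and the odd-order pole at $0$ correctly spells out the step the paper leaves implicit.
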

	
		\subsection{The structure of $\sigma_1\cap\sigma_2$}
		Remark that we have no idea how to determine $\sigma_1$ explicitly.
		Here, we turn to determine $\sigma_1\cap\sigma_2$, where is equivalent to 
 determining whether the intervals $I_1,\cdots,I_6$ contain any elements of $\sigma_1$.
	
	Recall that
	\begin{equation}\nonumber
		y_1(z+1)=e^{-2\pi i s(A)}y_1(z),\quad y_2(z+1)=e^{2\pi i s(A)}y_2(z).
	\end{equation}
	Define
	\begin{equation}\nonumber
		u(A)\coloneqq\log|e^{-2\pi is(A)}|,
	\end{equation}
	then $A\in\sigma_1$ is equivalent to $u(A)=0$. Fix $A\in I_j\subset\sigma_2$, $j=1,\cdots,6$. Define
	\begin{equation}\nonumber
		\phi(x)\coloneqq\Phi_e\left(x+\frac{\tau}{2}\right),\quad x\in\mathbb R.
	\end{equation}
	Since $\bar{z}=z-\tau$ for any $z=x+\frac{\tau}{2}$, $x\in\R$, we have $\phi(x)\in\R$ is smooth and 
	\begin{equation}\nonumber
		\phi'(x)=\Phi_e'\left(x+\frac{\tau}{2}\right)\in\R.
	\end{equation}
	Besides, since $W^2=Q(A)>0$, we know $W\in\R\backslash\{0\}$. Take $x_0\in\mathbb R$ such that $\phi(x_0)\neq0$. Then, $y_1(x_0+\frac{\tau}{2})\neq 0$ and $y_2(x_0+\frac{\tau}{2})\neq 0$. Denote
	\begin{equation}\nonumber
		Y(x)\coloneqq y_1\left(x+\frac{\tau}{2}\right),\quad x\in\mathbb R,
	\end{equation}
	then
	\begin{equation}\nonumber
		\frac{Y'(x_0)}{Y(x_0)}=\frac{y_1'\left(x_0+\frac{\tau}{2}\right)}{y_1\left(x_0+\frac{\tau}{2}\right)}=\frac{\Phi_e'\left(x_0+\frac{\tau}{2}\right)-W}{2\Phi_e\left(x_0+\frac{\tau}{2}\right)}=\frac{\phi'(x_0)-W}{2\phi(x_0)}\in \R,
	\end{equation}
	and
	$$Y''(x)=I\left(x+\frac{\tau}{2}; A\right)Y(x),\quad x\in\mathbb{R}.$$
	Since $A\in I_j\subset\R$ and $p\in (0, 1/4)$, it is easy to see $I(x+\frac{\tau}{2}; A)\in\R$ for $x\in\R$. Then the same proof as Lemma \ref{negative Q outside sigma2} implies $\frac{Y(x)}{Y(x_0)}\in\mathbb R$ for all $x\in\mathbb R$, so
	\begin{equation}
		e^{-2\pi i s(A)}=\frac{Y(x_0+1)}{Y(x_0)}\in\R\backslash\{0\},\quad \forall A\in I_j,\;\;j=1,\cdots,6.
	\end{equation}
	
	We first rule out the possibility that $I_1,I_4,I_6$ contain any elements of $\sigma_1$.
	\begin{lemma}\label{rule out 3}
		$I_1\cap\sigma_1=I_4\cap\sigma_1=I_6\cap\sigma_1=\emptyset$.
	\end{lemma}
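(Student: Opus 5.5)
The plan is to show that for $A\in I_1\cup I_4\cup I_6$ the real Hill operator on the horizontal line $\frac{\tau}{2}+\R$ admits no nonzero periodic or antiperiodic solution. The first step reduces membership in $\sigma_1$ to the existence of such a solution. By the discussion preceding the lemma, for every $A\in I_j$ the multiplier $\lambda(A)\coloneqq e^{-2\pi i s(A)}$ is real and nonzero, and $\Delta_1(A)=\frac12(\lambda+\lambda^{-1})$. Hence $|\Delta_1(A)|\ge 1$ on $I_j$, with equality if and only if $\lambda(A)=\pm1$. So $A\in I_j\cap\sigma_1$ forces $\lambda(A)=\pm1$. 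Since $Q(A)>0$ on $I_j$, $y_1$ and $y_2$ are independent, so the monodromy $N_1$ is $\pm I_2$. Moreover, $I(x+\frac{\tau}{2};A)$ is real for $x\in\R$ (as noted before the lemma), so $Y(x)=y_1(x+\frac{\tau}{2})/y_1(x_0+\frac{\tau}{2})$ is a real solution of
\[
Y''(x)=I\Big(x+\frac{\tau}{2};A\Big)Y(x),\qquad Y(x+1)=\pm Y(x).
\]
Note that $x+\frac{\tau}{2}$ never meets $\pm p+\Lambda_\tau$ because $p\in(0,\frac14)$, so $Y$ is smooth on $\R$. Multiplying by $Y$ and integrating over $[0,1]$, the boundary terms cancel because $YY'$ is $1$-periodic. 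This gives
\[
\int_0^1 \Big(Y'(x)^2+I\Big(x+\frac{\tau}{2};A\Big)Y(x)^2\Big)\,dx=0 .
\]
It therefore suffices to prove that $I(x+\frac{\tau}{2};A)>0$ for all $x\in\R$ whenever $A\in I_1\cup I_4\cup I_6$, or at least that the quadratic form above is positive.

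For the positivity I will use the substitution from the proof of Lemma~\ref{Q(A)}. With $H(z)=\zeta(z+p)-\zeta(z-p)-\zeta(2p)$ one has
\[
I=2H^2+AH+\frac{A^2}{4}-3\wp(2p)-\frac{2\wp'(2p)}{A}
=2\Big(H+\frac{A}{4}\Big)^2+\frac{A^2}{8}-3\wp(2p)-\frac{2\wp'(2p)}{A}.
\]
On the line $\frac{\tau}{2}+\R$, $H$ is real, and $H=H(\wp(z))$ is a Möbius function of $\wp(z)$ via \eqref{add2}. Since $\wp(x+\frac{\tau}{2})$ sweeps out exactly $[e_2,e_3]$, $H$ ranges over an explicit compact interval $[h_-,h_+]$ whose endpoints are given in terms of $e_2,e_3,\wp(p),\wp'(p),\zeta(p),\zeta(2p)$. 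So
\[
\min_x I\Big(x+\frac{\tau}{2};A\Big)=\min_{h\in[h_-,h_+]}\Big(2h^2+Ah+\frac{A^2}{4}\Big)-3\wp(2p)-\frac{2\wp'(2p)}{A},
\]
which is an explicit rational function $m(A)$ of $A$ on each interval.

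The sign analysis is then done interval by interval.
\begin{itemize}
\item On $I_4=(0,\alpha_1)$, the term $-2\wp'(2p)/A>0$ blows up as $A\to0^+$, because $\wp'(2p)<0$ for $2p\in(0,\frac12)$.
\item On $I_1$ and $I_6$, the term $A^2/4$ dominates as $|A|\to\infty$.
\item So $m(A)>0$ near the outer ends. I would locate the zeros of $m$ and compare them with the endpoints $\alpha_1,\beta_1,\beta_3$, where $Q$ vanishes and which are already excluded.
\end{itemize}
To express $h_\pm$ through the quantities $\alpha_k$, I will use the identities of Step~1 of the preceding theorem ($\alpha_k=C+t_k$, $t_1+t_2+t_3=-2C$). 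Evaluated at $\wp=e_2$ and $\wp=e_3$, $H$ becomes $C/2-t_2/2$ and $C/2-t_3/2$, that is, $-\frac{\alpha_1+\alpha_3}{2}$-type combinations. I then expect $8A\,m(A)$ to factor through $R_2(A)$ or partial products $A\pm\alpha_k$, with the correct sign precisely on $I_1,I_4,I_6$.

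The main obstacle is this last sign step: pointwise positivity of $I$ may fail on part of $I_1$ or $I_6$, for instance near $\beta_1$ or $\beta_3$. If it does, my fallback is to avoid pointwise positivity and argue by continuity instead. On each $I_j$, $\lambda(A)$ is real-analytic, has fixed sign, and $|\lambda|\neq1$ near the outer end by the positivity argument there, using the asymptotics $\Delta_1\sim\cosh\mu$ from Theorem~\ref{essential singularity} for $I_4$ and the analogous large-$A$ asymptotics for $I_1$ and $I_6$. I would then show that $|\lambda|=1$ at an interior point forces a point of $\sigma_1\cap\sigma_2$ in the interior, which corresponds by Theorem~\ref{solution = unitary} and Lemma~\ref{solution - family of solutions} to a non-trivial critical point of $G_{2,p}$. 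Such a point would be excluded by a sign or symmetry argument for these particular $A$ ranges.
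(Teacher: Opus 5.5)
Your reduction is sound: on $I_1\cup I_4\cup I_6$ the multiplier $\lambda(A)=e^{-2\pi i s(A)}$ is real, so $A\in\sigma_1$ would force $\lambda=\pm1$. That would give a real (anti)periodic solution $Y$ on $\frac{\tau}{2}+\mathbb{R}$ with $\int_0^1(Y'^2+IY^2)\,dx=0$.

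\textbf{The gap: pointwise positivity of $I$ is false near the endpoints.} You flag this as a risk, but it is a certainty, and it happens near every finite endpoint $\beta_1,\alpha_1,\beta_3$, including near $\alpha_1$ in $I_4$. At such an endpoint $Q=0$, so by Case~2 of Section~2 we have $\Delta_1=\pm1$, and the common eigenfunction $y_1$ is (anti)periodic under $z\mapsto z+1$. Its restriction to the line, after taking real or imaginary part, is a nonzero real (anti)periodic solution with $\int_0^1(Y'^2+IY^2)=0$. This forces $\min_x I(x+\frac{\tau}{2};A)<0$ at the endpoint: if $\min I=0$, then $Y'\equiv0$, hence $I\equiv0$ on the line, which is impossible. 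By continuity in $A$, $\min_x I<0$ also on a neighbourhood of each endpoint inside $I_1$, $I_4$, $I_6$. So no factorization of $8A\,m(A)$ can have the right sign on the whole intervals. Incidentally, at $\wp(z)=e_k$ one has $H=C/2+t_k=\alpha_k-C/2$, not $C/2-t_k/2$.

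\textbf{The fallback is circular.} A point of $I_j$ with $|\lambda|=1$ is exactly a point of $(\sigma_1\cap\sigma_2)\setminus\mathcal Z$, which is what the lemma must exclude. Rephrasing it as a non-trivial critical point of $G_{2,p}$ that is ``excluded by a sign or symmetry argument'' supplies no mechanism. Knowing $|\lambda|\neq1$ near the outer end does not stop $\log|\lambda|$ from changing sign further in.

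\textbf{The missing idea: control the sign of $\phi(x)=\Phi_e(x+\frac{\tau}{2})$ instead of $I$.}
\begin{itemize}
\item $\phi$ is real, and $Q=\Phi_e'^2-2\Phi_e''\Phi_e+4I\Phi_e^2>0$ on these intervals, so $\phi$ has no zero with $\phi'=0$.
\item Hence neither $\min_x\phi$ nor $\max_x\phi$ can cross $0$ as $A$ varies in $I_j$.
\item $\phi>0$ for $|A|$ large, and $\phi<0$ as $A\to0^+$ because $\wp'(2p)<0$. So $\phi$ has a fixed sign on each of $I_1$, $I_6$ and $I_4$.
\item Then $Y'/Y=(\phi'-W)/(2\phi)$ is real, and $\log|\lambda(A)|=-W\int_0^1\frac{dx}{2\phi(x)}\neq0$.
\end{itemize}
This zero-freeness would also rescue your energy argument. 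Writing $Z=Y_1v$, with $Y_1$ the zero-free real Floquet solution, gives $\int_0^1(Z'^2+IZ^2)=\int_0^1Y_1^2v'^2$ for (anti)periodic $Z$. But the decisive input is the same sign control of $\Phi_e$ that the paper uses.
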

	\begin{proof}
		We see from \eqref{Phie exact expression} that
		\begin{equation}\nonumber
			\phi(x)=\Phi_e\left(x+\frac{\tau}{2}\right)=\frac{A^2}{2}+AV_1(x)+V_0(x)+\frac{2\wp'(2p)}{A},
		\end{equation}
		where
		\begin{equation}\nonumber
			V_0(x)=\wp\left(x+\frac{\tau}{2}+p\right)+\wp\left(x+\frac{\tau}{2}-p\right)-2\wp(2p)
		\end{equation}
		and
		\begin{equation}\nonumber
			V_1(x)=\zeta(2p)-\zeta\left(x+\frac{\tau}{2}+p\right)+\zeta\left(x+\frac{\tau}{2}-p\right).
		\end{equation}
		In particular, both $V_0$ and $V_1$ are $1$-period, real and smooth in $\R$. Denote $$K_0\coloneqq\max_{x\in\R}|V_0(x)|,\quad K_1\coloneqq\max_{x\in\R}|V_1(x)|,$$ and
		$$m(A)\coloneqq\min_{x\in\R}\phi(x),\quad M(A)\coloneqq\max_{x\in\R}\phi(x).$$ Obviously, $m(A)$ and $M(A)$ are continuous with respect to $A$. We now prove that for $A\in I_1\cup I_4\cup I_6$, both $m(A)$ and $M(A)$ can not be $0$. If $m(A)=0$, then there exists $x_0\in\R$ such that $\phi(x_0)=0$ and $\phi'(x_0)=0$. Then,
		\begin{equation}\nonumber
			Q(A)=\Phi_e'^2-2\Phi_e''\Phi_e+4I\Phi_e^2=0,
		\end{equation}
		which is a contradiction. Similarly, one can prove that $M(A)\neq0$. Therefore, for all $A\in I_j$, $j\in{1,4,6}$, both $\operatorname{sgn}M(A)$ and $\operatorname{sgn}m(A)$ are invariant. When $|A|$ is sufficiently large, we know
		\begin{equation}\nonumber
			\phi(x)\geq\frac{|A|^2}{2}-K_1|A|-K_0-\frac{2|\wp'(2p)|}{|A|}>0,\quad\forall x\in\R.
		\end{equation}
		Then, by the continuity property, we get
		\begin{equation}\nonumber
			\phi(x)>0,\quad\forall x\in\R,\quad \forall A\in I_1\cup I_6.
		\end{equation}
	Similarly, when $A\to 0_+$, it follows from $\wp'(2p)<0$ that $\phi(x)<0$ for all $x\in\mathbb R$, hence
		\begin{equation}\nonumber
			\phi(x)<0,\quad\forall x\in\R,\quad \forall A\in I_4.
		\end{equation}
		Recall that $y_1y_2=\Phi_e$, so $y_1$ has no zeros on the line $\R+\frac{\tau}{2}$, namely $Y(x)=y_1(x+\frac{\tau}{2})$ has no zeros for $x\in\mathbb R$. Then for any $A\in I_1\cup I_4\cup I_6$, it follows from $\frac{Y'(x)}{Y(x)}=\frac{\phi'(x)-W}{2\phi(x)}\in\mathbb R$ that
		\begin{equation}\nonumber
			\begin{aligned}
				u(A)&=\log|e^{-2\pi i s(A)}|=\int_0^1\frac{d}{dx}\log |Y(x)|\,dx\\
				&=\int_0^1\frac{Y'(x)}{Y(x)}\,dx=\int_0^1\frac{\phi'(x)-W}{2\phi(x)}\,dx\\
				&=\frac{1}{2}(\log|\phi(1)|-\log|\phi(0)|)-\int_0^1\frac{W}{2\phi(x)}\,dx\\
				&=-W\int_0^1\frac{1}{2\phi(x)}\,dx\neq0.
			\end{aligned}
		\end{equation}
		Therefore, $s(A)$ can not be real and $A\not\in\sigma_1$.
	\end{proof}
	From now on, we consider the remaining intervals $I_2=(-\alpha_2,-\alpha_3)$, $I_3=(-\alpha_1,\beta_2)$ and $I_5=(\alpha_3,\alpha_2)$. The following polynomial will plays a crucial role:
	\begin{equation}\label{polynomial N}
		N(A)\coloneqq A^6+4(\eta_1-2\wp(2p))A^4+4\wp'(2p)A^3-16\wp'(2p)(\eta_1+\wp(2p))A+16\wp'(2p)^2.
	\end{equation}
	\begin{lemma}\label{lemma57}
		The polynomial $N(A)$ has exactly two simple negative zeros and at most two positive zeros, counting multiplicities.
	\end{lemma}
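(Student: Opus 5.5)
The plan is to combine Descartes' rule of signs, which gives the upper bounds, with one explicit sign evaluation, which gives existence and simplicity of the negative zeros. Throughout, $p\in(0,\frac14)$ and $\tau=ib$, so $2p\in(0,\frac12)$. Hence $\wp'(2p)<0$ and $\wp(2p)>e_1>0$, and $\eta_1\in\R$.

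\textbf{Step 1 (a sign fact).} The key input is
\[
\eta_1+\wp(2p)>0 .
\]
Since $\wp(2p)>e_1$, it suffices to prove $\eta_1+e_1>0$. I would use the theta-function representation $\zeta(z+\tfrac12)-\eta_1 z-\zeta(\tfrac12)=\frac{\vartheta_2'(z)}{\vartheta_2(z)}$, with $\vartheta_2$ in the normalization having period $1$ in $z$. Differentiating at $z=0$ gives
\[
-e_1-\eta_1=\frac{\vartheta_2''(0)}{\vartheta_2(0)} .
\]
For $\tau=ib$, $q=e^{\pi i\tau}\in(0,1)$ and $\vartheta_2(z)=2\sum_{n\ge0}q^{(n+1/2)^2}\cos((2n+1)\pi z)$. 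Then
\[
-\frac{\vartheta_2''(0)}{\vartheta_2(0)}=\pi^2\,\frac{\sum_{n\geq 0} (2n+1)^2q^{(n+1/2)^2}}{\sum_{n\geq 0} q^{(n+1/2)^2}}>0,
\]
so $\eta_1+e_1>0$.

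\textbf{Step 2 (Descartes).} Write $c_4=4(\eta_1-2\wp(2p))$, whose sign is unknown and does not matter.
\begin{itemize}
\item For positive zeros, the coefficients of $N(A)$ in decreasing degree have signs $+,\ \operatorname{sgn}c_4,\ -,\ +,\ +$. The $A^3$ coefficient is $4\wp'(2p)<0$, the $A$ coefficient is $-16\wp'(2p)(\eta_1+\wp(2p))>0$ by Step 1, and the constant is $16\wp'(2p)^2>0$. This sequence has exactly two sign changes whatever $c_4$ is, so $N$ has at most two positive zeros counting multiplicity.
\item For negative zeros, $N(-A)$ has signs $+,\ \operatorname{sgn}c_4,\ +,\ -,\ +$, again exactly two sign changes. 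So $N$ has at most two negative zeros counting multiplicity.
\end{itemize}

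\textbf{Step 3 (existence and simplicity of the negative zeros).} We have $N(0)=16\wp'(2p)^2>0$ and $N(A)\to+\infty$ as $A\to-\infty$. So if some $A_*<0$ satisfies $N(A_*)<0$, the intermediate value theorem gives a zero in $(-\infty,A_*)$ and another in $(A_*,0)$. Combined with the bound of two from Step 2, these are exactly the negative zeros and both are simple. The main obstacle is producing this $A_*$.

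\begin{itemize}
\item First I would test the distinguished negative points already available: $-\alpha_1,-\alpha_2,-\alpha_3,\beta_1,\beta_2$. At these points $A^6$, $A^4$ and $A^3$ can be reduced using $R_1(A)=\prod_j(A^2-\alpha_j^2)$, $R_2(A)=A^3-12\wp(2p)A+8\wp'(2p)$, and the identities
\[
\sum_j\alpha_j^2=12\wp(2p),\qquad \alpha_1\alpha_2\alpha_3=-4\wp'(2p).
\]
For instance, at $A=\beta_2\in(-\alpha_1,0)$ one has $\beta_2^3=12\wp(2p)\beta_2-8\wp'(2p)$, which lowers $N(\beta_2)$ to a cubic expression in $\beta_2$ that is linear in $\eta_1$. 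I expect one of these evaluations to be negative using $\eta_1+\wp(2p)>0$.
\item A complementary device is the completed square
\[
N(A)=\bigl(A^3-4\wp(2p)A+4\wp'(2p)\bigr)^2+4A\bigl(\eta_1A^3-\wp'(2p)A^2-4\wp(2p)^2A+4\wp'(2p)(\wp(2p)-\eta_1)\bigr).
\]
Choosing $A_*<0$ at a zero of the cubic inside the square reduces the task to showing that the second term is negative there.
\end{itemize}

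If neither device gives a clean sign, I would fall back on a conceptual argument. Later in this section the zeros of $N$ are presumably tied to the sign changes of $u(A)$ on $I_2$ and $I_3$. By Theorem \ref{cap is empty near 0}, $\sigma_1\cap\sigma_2$ stays away from $0$, and the boundary behaviour of $u$ at the endpoints $-\alpha_j,\beta_2$ would then force $N$ to change sign there.
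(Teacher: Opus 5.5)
\textbf{The gap is in Step 2, for negative zeros.} The coefficients of $N(-A)$ have signs $+,\operatorname{sgn}c_4,+,-,+$. When $c_4<0$ this is $+,-,+,-,+$, which has four sign changes, not two. So Descartes only bounds the number of negative zeros by four.

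Moreover, $c_4<0$ is the case that always occurs here. We have $\eta_1+e_2<0$: this is $\eta_1+e_2=-D\kappa$ in the paper, or your theta argument run with $\vartheta_4$, since the product formula gives $(\log\vartheta_4)''(0)>0$. We also have $\wp(2p)>e_1$. Hence
\[
\eta_1-2\wp(2p)<-e_2-2e_1=e_3-e_1<0 .
\]
For instance, for $\tau=i$ one has $\eta_1=\pi$ while $e_1=-e_2\approx 6.88$.

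So the upper bound of two negative zeros is not established, and with it the simplicity conclusion of Step 3 also fails. Step 3 itself never produces $A_*$; ``I expect'' is not an argument. Even with such a point, or with Rolle's theorem applied to $u$ on $I_2$ and $I_3$ as in your fallback, you only get at least two negative zeros. Descartes' parity then allows two or four, so the statement still does not follow. Theorem~\ref{cap is empty near 0} plays no role here.

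\textbf{The missing idea is the paper's normalization.} Put $k=(-\wp'(2p)/2)^{1/3}>0$ and $A=-2kt$. Then
\[
N(-2kt)=64k^6t^4\bigl[\mathcal G_n(t)-C_*\bigr],
\]
where $\mathcal G_n(t)=t^2+t^{-1}+t^{-4}+E_*(1-t^{-3})$, $C_*=3\wp(2p)/k^2$ and $E_*=(\eta_1+\wp(2p))/k^2$.
\begin{enumerate}
\item $\mathcal G_n'(t)=t^{-4}\mathcal T_n(t)$, and $\mathcal T_n(t)=2t^5-t^2+3E_*-4/t$ is strictly increasing from $-\infty$ to $+\infty$. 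So $\mathcal G_n$ strictly decreases and then strictly increases.
\item $\mathcal G_n(1)=3<C_*$ by AM--GM, because $k^6=\prod_j(\wp(2p)-e_j)<\wp(2p)^3$.
\item Hence $\mathcal G_n(t)=C_*$ has exactly two solutions, neither at the minimum. This gives exactly two simple negative zeros of $N$.
\end{enumerate}

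The point you were looking for is $A_*=-2k$. There $A_*^3=4\wp'(2p)$, so the $\eta_1$-terms $4\eta_1A(A^3-4\wp'(2p))$ cancel, and
\[
N(A_*)=192k^4\bigl(k^2-\wp(2p)\bigr)<0 .
\]

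This argument needs no sign information on $\eta_1$. The substitution $A=2kt$ handles the positive zeros in the same way. Your Step 1 and the positive half of Step 2 are correct and give a valid alternative proof of the positive bound, but they cannot rescue the negative half.
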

	\begin{proof}
		Let
		\begin{equation}\nonumber
			k=\left(-\frac{\wp'(2p)}{2}\right)^{\frac{1}{3}}>0,\quad C_*=\frac{3\wp(2p)}{k^2},\quad E_*=\frac{\eta_1+\wp(2p)}{k^2}.
		\end{equation}
		Since $e_1\neq e_2\neq e_3\neq e_1$ and $\wp(2p)>e_k$, by the inequality of arithmetic and geometric means, we have
		\begin{equation}\nonumber
			k^6=\frac{\wp'(2p)^2}{4}=\prod_{j=1}^3(\wp(2p)-e_j)<\left(\frac{3\wp(2p)-e_1-e_2-e_3}{3}\right)^3=\wp(2p)^3,
		\end{equation}
		from which we know $C_*>3$. First of all, we study the existence of negative zeros. Let $A=-2kt$, $t>0$, then
		\begin{equation}\nonumber
			\begin{aligned}
				N(-2kt)=&64k^6t^6+64(\eta_1-2\wp(2p))k^4t^4-32\wp'(2p)k^3t^3\\
				&+32\wp'(2p)(\eta_1+\wp(2p))kt+16\wp'(2p)^2\\
				=&64k^6[t^6+(E_*-C_*)t^4+t^3-E_*t+1]\\
				\eqqcolon &64k^6t^4[\mathcal{G}_n(t)-C_*],
			\end{aligned}
		\end{equation}
		where
		\begin{equation}\nonumber
			\mathcal{G}_n(t)\coloneqq t^2+t^{-1}+t^{-4}+E_*(1-t^{-3}).
		\end{equation}
		Therefore, finding a negative zero of $N(A)$ is equivalent to finding a solution of $\mathcal{G}_n(t)=C_*$. By a basic computation, we have
		\begin{equation}\nonumber
			\mathcal{G}_n'(t)=2t-\frac{1}{t^2}-\frac{4}{t^5}+\frac{3E_*}{t^4}=t^{-4}\left(2t^5-t^2+3E_*-\frac{4}{t}\right)\eqqcolon t^{-4}\mathcal{T}_n(t),
		\end{equation}
		and
		\begin{equation}\nonumber
			\mathcal{T}_n'(t)=10t^4-2t+\frac{4}{t^2}=\frac{2}{t^2}(5t^6-t^3+2)=\frac{2}{t^2}\left(5\left(t^3-\frac{1}{10}\right)^2+\frac{39}{20}\right)>0.
		\end{equation}
		Thus, $\mathcal{T}_n(t)$ is strictly increasing. Moreover, since
		\begin{equation}\nonumber
			\lim_{t\to 0_+} \mathcal{T}_n(t)=-\infty,\quad\lim_{t\to +\infty}\mathcal{T}_n(t)=+\infty,
		\end{equation}
		we know there exists $t_*\in\R_+$ such that $t_*$ is a minimum point of $\mathcal{G}_n(t)$. Moreover, $\mathcal{G}_n(t)$ is decreasing in $(0,t_*)$ and increasing in $(t_*,+\infty)$. Since $\mathcal{G}_n(1)=3<C_*$, we know the equation $\mathcal{G}_n(t)=C_*$ has exactly two solutions $t_1,t_2$ satisfying that $t_1\in(0,1)$ and $t_2\in(1,+\infty)$. Both $t_1$ and $t_2$ are not minimum point, and hence must be simple. This proves that $N(A)$ has exactly two simple negative zeros.
		
		Now, we study the existence of positive zeros. Let $A=2kt$, $t>0$, then
		\begin{equation}\nonumber
			N(2kt)=64k^6t^4[\mathcal{G}_p(t)-C_*],
		\end{equation}
		where
		\begin{equation}\nonumber
			\mathcal{G}_p(t)\coloneqq t^2-t^{-1}+t^{-4}+E_*(1+t^{-3}).
		\end{equation}
		Similarly, 
		\begin{equation}\nonumber
			\mathcal{G}_p'(t)=2t+\frac{1}{t^2}-\frac{4}{t^5}-\frac{3E_*}{t^4}=t^{-4}\left(2t^5+t^2-3E_*-\frac{4}{t}\right)\eqqcolon t^{-4}\mathcal{T}_p(t),
		\end{equation}
		and
		\begin{equation}\nonumber
			\mathcal{T}_p'(t)=10t^4+2t+\frac{4}{t^2}>0.
		\end{equation}
		Since 
		\begin{equation}\nonumber
			\lim_{t\to 0_+} \mathcal{T}_p(t)=-\infty,\quad\lim_{t\to +\infty}\mathcal{T}_p(t)=+\infty,
		\end{equation}
		we know $\mathcal{G}_p$ firstly strictly decreasing, and then strictly increasing, as $t$ varies from $0$ to $+\infty$. So $\mathcal{G}_p-C_*=0$ has at most two zeros on $(0,+\infty)$, namely $N(A)$ has at most two positive zeros. Suppose that $t_0\in(0,+\infty)$ is the minimum point of $\mathcal{G}_p$, then
		\begin{equation}\nonumber
			\mathcal{G}_p''(t_0)=t_0^{-4}\mathcal{T}_p'(t_0)>0.
		\end{equation}
Consequently, if $\mathcal{G}_p(t_0)=C_*$, then $2kt_0$ is a positive zero of multiplicity $2$. If $\mathcal{G}_p(t_0)<C_*$, then $N(A)$ has two positive simple zeros, and if $\mathcal{G}_p(t_0)>C_*$, then $N(A)$ has no positive zero. The proof is complete.
	\end{proof}
	
	Now, we start to find the connection between $u(A)$ and $N(A)$. 
Let $A\in I_j$ for $j\in \{2,3,5\}$, then $Q(A)>0$, which implies $\{a_1, a_2\}\cap\{-a_1, -a_2\}=\emptyset$ in $E_{\tau}$, so $\pm a_j\neq 0$ in $E_{\tau}$, namely $\Phi_e(0; A)\neq 0$ for all $A\in I_j$. Therefore, we can choose analytic square roots $W(A)=\sqrt{Q(A)}>0$ and $\sqrt{\Phi_e(0; A)}$ for $A\in I_j$.

Note from the evenness of $\Phi_e$ that $\Phi_e'(0; A)=0$.
Recalling \eqref{eqy1y2}, we 
	let $\tilde y_1$ satisfy GLE \eqref{generalized lame equation} with initial conditions
	\begin{equation}\nonumber
		\tilde y_1(0;A)=\sqrt{\Phi_e(0;A)},\quad \tilde y_1'(0;A)=-\frac{W(A)}{2\sqrt{\Phi_e(0;A)}},
	\end{equation}
	and define $\tilde y_2(z;A)\coloneqq\tilde y_1(-z;A)$. Then $\tilde y_1(z; A)$ and $\tilde y_2(z; A)$ are analytic for $A\in I_j$. Note that the Wronskain
	$$\tilde y_1\tilde y_2'-\tilde y_1'\tilde y_2\equiv (\tilde y_1\tilde y_2'-\tilde y_1'\tilde y_2)(0)=W(A)>0.$$
Furthermore, $\tilde y_1\tilde y_2$ is a solution of the third order ODE \eqref{second symmetric product equation}. By \eqref{eqQA}, it is easy to verify that
	\begin{equation}\nonumber
		(\tilde y_1\tilde y_2)(0)=\Phi_e(0;A),\quad (\tilde y_1\tilde y_2)'(0)=0=\Phi_e'(0;A),\quad (\tilde y_1\tilde y_2)''(0)=\Phi_e''(0;A).
	\end{equation}
	Therefore, $\tilde y_1\tilde y_2=\Phi_e=y_1y_2$, which implies that there is $k\in\{1,2\}$ such that $\tilde y_1= cy_k$ for some $c\neq 0$.  Then
	$$\tilde y_1(z+1;A)=\varepsilon_1(A)\tilde y_1(z;A),\quad\text{where }\varepsilon_1(A)\coloneqq e^{(-1)^k2\pi i s(A)}.$$ 
	Consequently, $\varepsilon_1(A)$ is analytic for $A\in I_j$.
	Note that
	$$u(A)=\log|e^{-2\pi i s(A)}|=(-1)^{k-1}\log|\varepsilon_1(A)|,$$
	and we want to study whether $u(A)=0$ for some $A\in I_j$,
	hence, by replacing $u(A)$ with $-u(A)$ if necessary, we can always take $k=1$ in the sequel, i.e.,
$$\varepsilon_1(A)=e^{-2\pi i s(A)},\quad u(A)=\log|\varepsilon_1(A)|.$$	
	 Then we have the following result.
	
	\begin{lemma}
	Under the above notations,  we have
		\begin{equation}\label{derivative}
		u'(A)=-\frac{N(A)}{4A^3W(A)}=-\frac{N(A)}{4A^3\sqrt{Q(A)}},\quad\forall A\in I_j.
	\end{equation}
	\end{lemma}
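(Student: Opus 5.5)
The plan is to compute $u(A)$ as an explicit period integral, differentiate it in $A$ under the integral sign, and reduce the result to an elementary computation with elliptic functions.

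\textbf{Step 1: $\log\varepsilon_1$ as a period integral.} Since $\tilde y_1=cy_1$ and $\tilde y_1\tilde y_2=\Phi_e$ with Wronskian $W(A)$, formula \eqref{eqy1y2} gives $\tilde y_1'/\tilde y_1=(\Phi_e'-W)/(2\Phi_e)$. Take a path $L$ from $z_0$ to $z_0+1$ that avoids the zeros of $\Phi_e$ and the points $\pm p$; near a fixed $A_0\in I_j$ one path works for all nearby $A$. Integrating along $L$ gives
\[
\log\varepsilon_1(A)=\frac12\bigl(\log\Phi_e(z_0+1)-\log\Phi_e(z_0)\bigr)-\frac{W(A)}{2}\int_L\frac{dz}{\Phi_e(z;A)}+2\pi i n .
\]
Here $n\in\mathbb Z$ is locally constant: it comes from the residues of $\tilde y_1'/\tilde y_1$ at zeros and poles, which are integers. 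The first term vanishes because $\Phi_e$ is $1$-periodic. Moreover $\varepsilon_1(A)\in\R\setminus\{0\}$ on $I_j$, as shown just before Lemma \ref{rule out 3}. Hence on $I_j$
\[
u'(A)=\frac{\varepsilon_1'(A)}{\varepsilon_1(A)}=-\frac12\int_L\partial_A\Bigl(\frac{W}{\Phi_e}\Bigr)dz .
\]
So it suffices to show
\[
\int_L\partial_A\Bigl(\frac{W}{\Phi_e}\Bigr)dz=\frac{N(A)}{2A^3W}.
\]

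\textbf{Step 2: remove the exact derivative.} Write $\dot{}=\partial_A$. Since $\partial_A(W/\Phi_e)=\dot Q/(2W\Phi_e)-W\dot\Phi_e/\Phi_e^2$, multiplying by $W$ gives
\[
W\,\partial_A\Bigl(\frac{W}{\Phi_e}\Bigr)=\frac{\dot Q\,\Phi_e-2Q\dot\Phi_e}{2\Phi_e^2}.
\]
Insert $Q=\Phi_e'^2-2\Phi_e''\Phi_e+4I\Phi_e^2$ and its $A$-derivative. Then use the third-order equation \eqref{second symmetric product equation} for both $\Phi_e$ and $\dot\Phi_e$; the latter satisfies the differentiated equation, which has source terms $4\dot I\Phi_e'+2\dot I'\Phi_e$. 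The goal is the identity
\[
\frac{\dot Q\,\Phi_e-2Q\dot\Phi_e}{2\Phi_e^2}=\frac{d}{dz}\Bigl(\frac{\Phi_e'\dot\Phi_e-\Phi_e\dot\Phi_e'}{\Phi_e}\Bigr)+P(z;A),
\]
where $P$ is a polynomial in $\dot I$, $\Phi_e$ and $\dot\Phi_e$ with no denominator; I expect something like $P=2\dot I\Phi_e+c\,\dot\Phi_e''$. The bracketed function is elliptic, and a $\dot\Phi_e''$ term is the derivative of an elliptic function, so both integrate to zero over $L$.

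\textbf{Step 3: integrate the remainder.} Use the $H$-representation from the proof of Lemma \ref{Q(A)}, with $H=\zeta(z+p)-\zeta(z-p)-\zeta(2p)$:
\[
I=2H^2+AH+\tfrac{A^2}{4}-3\wp(2p)-\tfrac{2\wp'(2p)}{A},\qquad
\Phi_e=H^2-AH+\tfrac{A^2}{2}-3\wp(2p)+\tfrac{2\wp'(2p)}{A}.
\]
Then $\dot I=H+\frac A2+\frac{2\wp'(2p)}{A^2}$ and $\dot\Phi_e=-H+A-\frac{2\wp'(2p)}{A^2}$, so $P$ becomes a polynomial in $H$ of degree at most $3$. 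Powers of $H$ are integrated over $L$ using:
\begin{itemize}
\item $\int_L H\,dz$, from the quasi-periodicity of $\log\sigma$ (it involves $\eta_1$, $p$ and $\zeta(2p)$);
\item $H^2=\wp(z+p)+\wp(z-p)+\wp(2p)$, whose integral is $-2\eta_1+\wp(2p)$;
\item $H^3=\tfrac12H''+3\wp(2p)H-\wp'(2p)$, which reduces $H^3$ to lower powers modulo exact derivatives.
\end{itemize}
Collecting terms should give $N(A)/(2A^3)$, and hence \eqref{derivative}. In particular all contributions of $\int_L H\,dz$ must cancel, since $N$ contains no $p$- or $\zeta(2p)$-dependence beyond $\eta_1$, $\wp(2p)$ and $\wp'(2p)$.

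The main obstacle is Step 2: finding the exact-derivative decomposition so that only a polynomial in $H$ survives. If the naive guess fails, I would instead fix an ansatz $F=(aH^2+bH+c)/\Phi_e$ with $A$-dependent coefficients and solve for them so that $\partial_A(W/\Phi_e)-F'$ has no poles at the zeros of $\Phi_e$. The second delicate point is the exact cancellation of the $\int_L H$ terms in Step 3; this provides a consistency check on the computation.
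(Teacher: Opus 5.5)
Your proposal is correct. It reaches the same intermediate formula as the paper but by a genuinely different mechanism.

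\textbf{The paper's route.} The paper differentiates the solution itself. With $B=\tilde y_2(\partial_A\tilde y_1)'-\tilde y_2'\,\partial_A\tilde y_1$ one has $B'=\Phi_e\,\partial_AI$. The jump of $B$ over a period then gives $\partial_A\varepsilon_1/\varepsilon_1=-J_1(A)/W(A)$, where $J_1=\int_{\tau/2}^{\tau/2+1}\Phi_e\,\partial_AI\,dz$. This is why the paper normalizes $\tilde y_1$ by initial data at $0$: it needs $\tilde y_1$ to depend analytically on $A$.

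\textbf{Your route, Step 1.} You write $\log\varepsilon_1$ as the explicit period $-\frac W2\int_L dz/\Phi_e$ and differentiate only explicit elliptic functions. One small correction: the term $\int_L\Phi_e'/(2\Phi_e)\,dz$ is $\pi i$ times the winding number of $\Phi_e\circ L$ about $0$, not literally zero. It is, however, locally constant in $A$, and that is all you need.

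\textbf{Your Step 2.} Your guess holds exactly with $c=0$, and it follows from the definition of $Q$ alone; the third-order equation is not needed. Writing $\dot\Phi_e=\partial_A\Phi_e$ and $\dot I=\partial_AI$,
\[
W\,\partial_A\Bigl(\frac{W}{\Phi_e}\Bigr)=\frac{d}{dz}\Bigl(\frac{\Phi_e'\dot\Phi_e-\Phi_e\dot\Phi_e'}{\Phi_e}\Bigr)+2\,\dot I\,\Phi_e .
\]
The bracket is $1$-periodic, so its derivative integrates to zero over $L$, giving $u'(A)=-J_1(A)/W(A)$. This is the paper's intermediate formula.

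\textbf{Your Step 3.} This is the paper's computation. The $\int_L H\,dz$ terms you worried about cancel already at the level of the integrand. With $K=\frac A2+\frac{2\wp'(2p)}{A^2}$, one has $\Phi_e\dot I=H^3+(K-A)H^2-3\wp(2p)H+\mathrm{const}$. Reducing $H^3$ via $H''$ contributes exactly $+3\wp(2p)H$, so the linear terms drop out.

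\textbf{What each route buys.} Your approach never needs differentiability of solutions in $A$. The price is that $1/\Phi_e$ has poles at the zeros of $\Phi_e$. These may lie on $\R+\frac\tau2$ for $A\in I_2,I_3,I_5$, which forces your $A$-dependent choice of $L$. The paper's integrand $\Phi_e\,\partial_AI$ has no such poles, so it can use the fixed segment $[\frac\tau2,\frac\tau2+1]$, where the integrand is manifestly real.
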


\begin{proof}	
Since $\tilde y_1''=I\tilde y_1$, we have
	\begin{equation}\nonumber
		\partial_A\tilde y_1''=\tilde y_1\partial_AI+I\partial_A \tilde y_1.
	\end{equation}
	Denote
	\begin{equation}\nonumber
		B(z)=B(z;A)\coloneqq\tilde y_2(\partial_A\tilde y_1)'-\tilde y_2'\partial_A\tilde y_1,
	\end{equation}
	then
	\begin{equation}\nonumber
		\begin{aligned}
			B'&=\tilde y_2(\partial_A\tilde y_1)''-\tilde y_2''\partial_A\tilde y_1
			=\tilde y_2(\tilde y_1\partial_AI+I\partial_A\tilde y_1)-I\tilde y_2\partial_A\tilde y_1\\
			&=\tilde y_1\tilde y_2\partial_AI=\Phi_e\partial_AI.
		\end{aligned}
	\end{equation}
	Taking $z_0=\frac{\tau}{2}$ and integrating from $z_0$ to $z_0+1$ yields that
	\begin{equation}\nonumber
		B(z_0+1)-B(z_0)=\int_{z_0}^{z_0+1}\Phi_e \partial_AI\,dz.
	\end{equation}
	Since $\tilde y_1(z+1;A)=\varepsilon_1(A)\tilde y_1(z;A)$, we have $$\partial_A \tilde y_1(z_0+1;A)=\partial_A\varepsilon_1(A) \tilde y_1(z_0;A)+\varepsilon_1(A)\partial_A\tilde y_1(z_0;A),$$ $$(\partial_A\tilde y_1)'(z_0+1;A)=\partial_A\varepsilon_1(A) \tilde y_1'(z_0;A)+\varepsilon_1(A)(\partial_A\tilde y_1)'(z_0;A).$$ Hence, by a direct calculation, we know
	\begin{equation}\nonumber
		\begin{aligned}
			B(z_0+1)&=\tilde y_2(z_0+1)(\partial_A\tilde y_1)'(z_0+1)-\tilde y_2'(z_0+1)\partial_A\tilde y_1(z_0+1)\\
			&=\varepsilon_1(A)^{-1}\tilde y_2(z_0)\left(\partial_A\varepsilon_1(A)\tilde y_1'(z_0)+\varepsilon_1(A)(\partial_A\tilde y_1)'(z_0)\right)\\
			&\quad-\varepsilon_1(A)^{-1}\tilde y_2'(z_0)\left(\partial_A\varepsilon_1(A) \tilde y_1(z_0)+\varepsilon_1(A)\partial_A\tilde y_1(z_0)\right)\\
			&=B(z_0)-W(A)\frac{\partial_A\varepsilon_1(A)}{\varepsilon_1(A)},
		\end{aligned}
	\end{equation}
	or equivalently,
	\begin{equation}\nonumber
		\frac{\partial_A\varepsilon_1(A)}{\varepsilon_1(A)}=-\frac{J_1(A)}{W(A)},
	\end{equation}
	where
	\begin{equation}\nonumber
		J_1(A)\coloneqq\int_{z_0}^{z_0+1}\Phi_e \partial_AI\,dz=\int_{\frac{\tau}{2}}^{\frac{\tau}{2}+1}\Phi_e \partial_AI\,dz.
	\end{equation}
	On the interval $\left(\frac{\tau}{2},\frac{\tau}{2}+1\right)$, we know $I,\partial_A I,\Phi_e, W(A)\in\R$. Therefore,
	\begin{equation}\nonumber
		u'(A)=\operatorname{Re}\frac{\partial_A\varepsilon_1(A)}{\varepsilon_1(A)}=-\frac{J_1(A)}{W(A)}.
	\end{equation}
	
It remains to compute $J_1(A)$. Define
	\begin{equation}\nonumber
		\mathcal{S}_p(z)\coloneqq\wp(z+p)+\wp(z-p),\quad \mathcal{H}_p(z)\coloneqq\zeta(z+p)-\zeta(z-p)-\zeta(2p).
	\end{equation}
	Since both $\mathcal{H}_p$ and $\mathcal{S}_p$ are both elliptic functions, by Laurent's expansion, one can verify that
	\begin{equation}\nonumber
		\mathcal{H}_p^2=\mathcal{S}_p+\wp(2p),\quad \mathcal{H}_p(\mathcal{S}_p-2\wp(2p))=\frac{1}{2}\mathcal{H}_p''-\wp'(2p).
	\end{equation}
Since $\zeta'=-\wp$ and $\zeta(z+1)-\zeta(z)=\eta_1$, we have
	\begin{equation}\nonumber
		\begin{aligned}
			\int_{\frac{\tau}{2}}^{\frac{\tau}{2}+1}\mathcal{S}_p(z)\,dz&=\int_{\frac{\tau}{2}}^{\frac{\tau}{2}+1}-(\zeta'(z+p)+\zeta'(z-p))\,dz=-2\eta_1,
		\end{aligned}
	\end{equation}
	which implies that
	\begin{equation}\nonumber
		\int_{\frac{\tau}{2}}^{\frac{\tau}{2}+1}\mathcal{H}_p^2(z)\,dz=\wp(2p)-2\eta_1.
	\end{equation}
	On the other hand, since $\mathcal{H}_p$ is elliptic, we know
	\begin{equation}\nonumber
		\int_{\frac{\tau}{2}}^{\frac{\tau}{2}+1}\mathcal{H}_p''(z)\,dz=0,
	\end{equation}
	which implies that
	\begin{equation}\nonumber
		\int_{\frac{\tau}{2}}^{\frac{\tau}{2}+1}\mathcal{H}_p(z)(\mathcal{S}_p(z)-2\wp(2p))\,dz=-\wp'(2p).
	\end{equation}
	Denote
	\begin{equation}\nonumber
		K(A)\coloneqq\frac{A}{2}+\frac{2\wp'(2p)}{A^2},\quad L(A)\coloneqq\frac{A^2}{2}+\frac{2\wp'(2p)}{A}-2\wp(2p),
	\end{equation}
	then $\partial_AI=\mathcal{H}_p+K$ and $\Phi_e=\mathcal{S}_p-A\mathcal{H}_p+L$. Since $L-AK=-2\wp(2p)$, we know
	\begin{equation}\nonumber
		\begin{aligned}
			\Phi_e\partial_AI &=(\mathcal{H}_p+K)(\mathcal{S}_p-A\mathcal{H}_p+L)\\
			&=\mathcal{H}_p\mathcal{S}_p-A\mathcal{H}_p^2+\mathcal{H}_pL+K\mathcal{S}_p-A\mathcal{H}_pK+KL\\
			&=\mathcal{H}_p(\mathcal{S}_p-2\wp(2p))-A\mathcal{H}_p^2+K\mathcal{S}_p+KL.
		\end{aligned}
	\end{equation}
	Summing up the above, we have
	\begin{equation}\nonumber
		J_1(A)=-\wp'(2p)-A(\wp(2p)-2\eta_1)-2\eta_1K+KL=\frac{N(A)}{4A^3}.
	\end{equation}
	This implies \eqref{derivative}.
\end{proof}

	Denote $$J=(l_J,r_J)\in\{I_2,I_3,I_5\},$$ we know $\Delta_1(l_J),\Delta_1(r_J)\in\{\pm 1\}$ by the argument of Case 2 in Section \ref{section 2}. Since
	$$\varepsilon_1(A)^2-2\Delta_1(A)\varepsilon_1(A)+1=0,\quad\forall A\in J,$$
	 we get $\varepsilon_1(A)\to\Delta_1(l_J)\in\{\pm 1\}$ as $A\to {l_J}_+$, and  $\varepsilon_1(A)\to\Delta_1(r_J)\in\{\pm 1\}$ as $A\to {r_J}_-$. Moreover, $$u({l_J})=u({r_J})=0.$$
	Since $l_J$ is a simple zero of $Q$ and $Q>0$ in $(l_J,r_J)$, we know
	\begin{equation}\nonumber
		\frac{Q(A)}{A-l_J}\to Q'(l_J)>0,\quad\text{as }A\to l_J.
	\end{equation}
Thus, near $l_J$, we have
	\begin{equation}\nonumber
		\left|\frac{N(t)}{4t^3\sqrt{Q(t)}}\right|\leq\frac{C}{\sqrt{t-l_J}}.
	\end{equation}
	The analysis near the right endpoint $r_J$ is similar and we can finally conclude that the integral
	$$
	\int_{l_J}^{A}\frac{N(t)}{4t^3\sqrt{Q(t)}}\,dt
	$$
	is integrable at both endpoints. 
	Integrating \eqref{derivative} gives that
	\begin{equation}\nonumber
		u(A)=-\int_{l_J}^{A}\frac{N(t)}{4t^3\sqrt{Q(t)}}\,dt,\quad \forall A\in [l_J, r_J].
	\end{equation}
Clearly $u\in C([l_J,r_J])\cap C^1((l_J,r_J))$, and
	\begin{equation}\nonumber
		A\in\sigma_1\cap J\Leftrightarrow u(A)=0,
\quad
		u'(A)=0\Leftrightarrow N(A)=0.
	\end{equation}
	
\begin{lemma}	Recall $I_2=(-\alpha_2,-\alpha_3)$, $I_3=(-\alpha_1,\beta_2)$ and $-\alpha_3<-\alpha_1<\beta_2<0$. We have
	$I_2\cap \sigma_1=I_3\cap\sigma_1=\emptyset$.
	\end{lemma}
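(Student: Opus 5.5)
Since $I_2\cup I_3\subset(-\infty,0)$, the plan is to locate the two negative zeros of $N$ from Lemma \ref{lemma57} relative to $I_2$ and $I_3$, and combine this with $u(l_J)=u(r_J)=0$ and $u'=-N/(4A^3\sqrt{Q})$.

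\emph{Reduction.} Fix $J=(l_J,r_J)\in\{I_2,I_3\}$. Then $u$ is continuous on $[l_J,r_J]$, $C^1$ inside, and vanishes at both endpoints. Suppose $N$ has no zero in $J$. Then $u'$ has a constant sign on $J$, so $u$ is strictly monotone. It follows that $u\neq0$ on $J$, and hence $J\cap\sigma_1=\emptyset$. Conversely, by Rolle's theorem $N$ must vanish in $J$ whenever $u\not\equiv0$ there. Suppose $N$ has exactly one simple zero $A_*\in J$. Then $u'$ changes sign once, so $u$ is strictly monotone on $(l_J,A_*]$ and on $[A_*,r_J)$. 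Together with $u(l_J)=u(r_J)=0$, this gives $u\neq0$ on $J$. So it suffices to show that each of $I_2$ and $I_3$ contains at most one zero of $N$, and that any such zero is simple. By Lemma \ref{lemma57}, $N$ has only two negative zeros and both are simple. It therefore remains to exclude the case where both negative zeros lie in the same interval $I_2$ or $I_3$.

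\emph{Separating the two negative zeros.} I would evaluate the sign of $N$ at a point lying between $I_2$ and $I_3$, namely on $[-\alpha_3,-\alpha_1]$, or alternatively at the points $\pm\alpha_k$ and $\beta_j$ themselves. The natural candidate is $A=-\alpha_3$ or $A=-\alpha_1$. Using $\alpha_1\alpha_2\alpha_3=-4\wp'(2p)$, $\sum\alpha_i^2=12\wp(2p)$ and the factorization of $R_1,R_2$, I would try to express $N(\pm\alpha_k)$ and $N(\beta_j)$ in terms of the $\alpha$'s and $\eta_1$. From the proof of Lemma \ref{lemma57}, $N(A)<0$ exactly when $\mathcal G_n(t)<C_*$ with $t=-A/(2k)$, that is, when $t\in(t_1,t_2)$. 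Since $N(0)=16\wp'(2p)^2>0$ and $N(-\infty)>0$, we have $N<0$ precisely between the two negative zeros $A_-<A_+<0$. Once one point of $[-\alpha_3,-\alpha_1]$ is shown to satisfy $N\le0$, the two zeros are split between $(-\infty,-\alpha_1]$ and $[-\alpha_3,0)$. In that case $I_2$ contains at most $A_-$ and $I_3$ contains at most $A_+$, which is what we need. An alternative that avoids computing $N$ directly is as follows. If both zeros were in $I_2$, then $N>0$ on all of $I_3$, so $u$ would be strictly monotone on $I_3$, contradicting $u(l_J)=u(r_J)=0$. Hence each interval contains at least one zero of $N$ by Rolle's theorem (if $u\not\equiv0$, which holds by analyticity, since otherwise $N\equiv0$ on $J$). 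Because there are only two negative zeros, each of $I_2$ and $I_3$ contains exactly one, and that zero is simple. This cleaner argument is the one I would use.

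\emph{Conclusion and main obstacle.} In each $J\in\{I_2,I_3\}$, $u$ has exactly one critical point and vanishes at both ends. Therefore $u$ is strictly monotone on either side of that critical point and is nonzero throughout $J$. This gives $I_2\cap\sigma_1=I_3\cap\sigma_1=\emptyset$. The step that needs the most care is the boundary behaviour: I must check that $u$ really extends continuously with value $0$ at the endpoints. This was established above via integrability of $N/(t^3\sqrt{Q})$, using that the endpoints are simple zeros of $Q$ and are nonzero. I must also check that Rolle's theorem applies with $u\in C([l_J,r_J])\cap C^1(J)$. Both points are available from the preceding discussion, so I expect no genuine obstruction.
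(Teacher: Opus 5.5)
Your final argument is correct and is essentially the paper's proof. Rolle's theorem on each of the disjoint intervals $I_2, I_3$ (using $u(l_J)=u(r_J)=0$) forces a zero of $N$ in each, so Lemma~\ref{lemma57} leaves exactly one simple zero of $N$ per interval, and strict monotonicity of $u$ on either side of that zero then excludes interior zeros of $u$; the preliminary plan of evaluating the sign of $N$ on $[-\alpha_3,-\alpha_1]$ is unnecessary, as you noticed yourself.
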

	\begin{proof}
Let $J\in\{I_2,I_3\}$. By $u(l_J)=u(r_J)=0$, there exists $A_J\in(l_J,r_J)$ such that $u'(A_J)=0$, i.e., $N(A_{I_2})=N(A_{I_3})=0$. Since $N(A)$ has exactly two negative simple zeros, we have
\begin{equation}\label{eee}u'(A)\neq 0,\quad\forall A\in (l_J, r_J)\setminus\{A_J\}.\end{equation}
This implies $u(A)\neq 0$ for any $A\in I_2\cup I_3$. Indeed, if $A_0\in(l_J,r_J)$ and $u(A_0)=0$, then there exist $A_1\in(l_J,A_0)$ and $A_2\in(A_0,r_J)$ such that $u'(A_1)=u'(A_2)=0$, a contradiction with \eqref{eee}. Thus,  $I_2\cap \sigma_1=I_3\cap\sigma_1=\emptyset$.
	\end{proof}

\begin{lemma} Recall  $I_5=(\alpha_3, \alpha_2)$ with $\alpha_3>0$.  If $N(\alpha_2)>0$ and $N(\alpha_3)>0$, then there is a unique $A_*\in I_5$ such that $\sigma_1\cap I_5=\{A_*\}$. For all other cases, $\sigma_1\cap I_5=\emptyset$.	
\end{lemma}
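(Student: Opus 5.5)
On $I_5=(\alpha_3,\alpha_2)$ I will use the representation
\[
u(A)=-\int_{\alpha_3}^{A}\frac{N(t)}{4t^3\sqrt{Q(t)}}\,dt,
\]
together with $u(\alpha_3)=u(\alpha_2)=0$. Since $t>0$ and $\sqrt{Q(t)}>0$ on $I_5$, the sign of $u'$ is exactly the sign of $-N$. By Lemma \ref{lemma57}, $N$ has at most two positive zeros counting multiplicity. So the plan is to count the zeros of $N$ inside $I_5$ using the signs of $N(\alpha_3)$ and $N(\alpha_2)$, and then read off the zeros of $u$ from its monotonicity pattern.

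\textbf{Step 1: $N$ has at least one zero in $I_5$.} Since $u$ vanishes at both endpoints and is $C^1$ inside, Rolle's theorem gives a point of $I_5$ with $u'=0$, hence with $N=0$. Because there are at most two positive zeros, only two configurations are possible. Either $N$ has exactly one sign change in $I_5$ (a simple zero, or a double zero with no sign change, possibly with the other zero outside $I_5$), or $N$ has two simple zeros in $I_5$.

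\textbf{Step 2: the case $N(\alpha_3)>0$ and $N(\alpha_2)>0$.} By Step 1, $N$ has a zero in $I_5$. With positive signs at both endpoints, the zeros inside must be two simple zeros $\gamma_1<\gamma_2$, or one double zero. A double zero is impossible: $N$ would then be $\ge0$ on $I_5$, so $u$ would be nonincreasing and nonconstant with $u(\alpha_3)=u(\alpha_2)=0$, a contradiction. Hence
\[
N>0 \text{ on } (\alpha_3,\gamma_1)\cup(\gamma_2,\alpha_2), \qquad N<0 \text{ on } (\gamma_1,\gamma_2).
\]
It follows that $u$ strictly decreases from $0$ on $[\alpha_3,\gamma_1]$, strictly increases on $[\gamma_1,\gamma_2]$, and strictly decreases to $0$ on $[\gamma_2,\alpha_2]$. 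Therefore $u(\gamma_1)<0$ and $u(\gamma_2)>0$, and $u$ is nonzero on the two outer pieces. On $(\gamma_1,\gamma_2)$ it has exactly one zero $A_*$, and this gives $\sigma_1\cap I_5=\{A_*\}$.

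\textbf{Step 3: all other cases.} Suppose $N(\alpha_3)\le0$ or $N(\alpha_2)\le0$. Then $N$ cannot have two simple zeros strictly inside $I_5$ together with positive signs at both ends. Counting zeros under Lemma \ref{lemma57} gives two possibilities.

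\emph{(a)} $N$ has exactly one interior critical point of $u$ at which $u'$ changes sign. Then $u$ is strictly monotone on each side of it, and with $u=0$ at both ends, $u\neq0$ inside. This is the Rolle argument of the previous lemma for $I_2,I_3$.

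\emph{(b)} $N$ has two interior zeros and is negative at an endpoint. This would force at least three sign changes, or more than two zeros, which is impossible. To justify this I will fix the sign of $N$ near $0^+$ and at $+\infty$: $N(0)=16\wp'(2p)^2>0$ and $N\to+\infty$. Then any two positive zeros bound a single interval of negativity, and a nonpositive endpoint value forces that endpoint into or onto this negativity interval, leaving at most one zero strictly inside $I_5$.

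Boundary cases where $N(\alpha_3)=0$ or $N(\alpha_2)=0$ are handled the same way; they leave at most one interior sign change.

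\textbf{Main obstacle.} The delicate part is Step 3 in the degenerate configurations: a double zero of $N$, or a zero of $N$ located exactly at an endpoint. Here I would lean on the explicit shape of $\mathcal{G}_p$ from Lemma \ref{lemma57} (decreasing, then increasing) to argue that $\{N<0\}\cap(0,\infty)$ is a single open interval, possibly empty. Once that is established, every case reduces to comparing this interval with $(\alpha_3,\alpha_2)$.
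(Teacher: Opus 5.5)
Your proposal is correct and follows essentially the paper's argument. You use Lemma \ref{lemma57} together with $N(0)=16\wp'(2p)^2>0$ and $N\to+\infty$ to show that $\{N<0\}\cap(0,\infty)$ is a single interval, compare it with $(\alpha_3,\alpha_2)$, and then read off the zeros of $u$ from its monotonicity and $u(\alpha_3)=u(\alpha_2)=0$. The only cosmetic difference is how a sign change of $N$ in $I_5$ is obtained: the paper gets a point where $N<0$ from $\int_{\alpha_3}^{\alpha_2}\frac{N(A)}{4A^3\sqrt{Q(A)}}\,dA=0$, whereas you get a zero of $N$ by Rolle's theorem and rule out the double-zero configuration by monotonicity.
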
	
\begin{proof}
	Note
	\begin{equation}\nonumber
		0=u(\alpha_3)-u(\alpha_2)=\int_{\alpha_3}^{\alpha_2}\frac{N(A)}{4A^3\sqrt{Q(A)}}\,dA,
	\end{equation} 
	so the sign of $N(A)$ in $I_5$ must change. Take $A_0\in I_5$ such that $N(A_0)<0$. Since $N(0)>0$ and $N(A)\to+\infty$ as $A\to+\infty$, we know there exist $\xi_1\in(0,A_0)$ and $\xi_2\in(A_0,+\infty)$ such that $N(\xi_1)=N(\xi_2)=0$. 
	Then Lemma \ref{lemma57} implies
\begin{equation}\label{ece}N(A)\begin{cases}>0\quad\text{if }\; A\in (0,\xi_1)\cup (\xi_2,+\infty)\\
	<0\quad\text{if }\; A\in (\xi_1, \xi_2).\end{cases}\end{equation}

	For simplicity, denote 
	\begin{equation}\nonumber
		w(A)\coloneqq\frac{1}{4A^3\sqrt{Q(A)}}>0,\quad A\in I_5,
	\end{equation}
	then $u'(A)=-N(A)w(A)$ and $u(\alpha_2)=u(\alpha_3)=0$. Then,
	\begin{equation}\nonumber
		u(A)=-\int_{\alpha_3}^Aw(t)N(t)\,dt=\int_A^{\alpha_2}w(t)N(t)\,dt.
	\end{equation}
	There are four possibilities.
	
	First, $N(\alpha_3)>0$ and $N(\alpha_2)>0$. Since $A_0\in (\alpha_3, \alpha_2)$ and $N(A_0)<0$, so \eqref{ece} gives $\alpha_3<\xi_1<\xi_2<\alpha_2$ and
	$$
u(\xi_1)=-\int_{\alpha_3}^{\xi_1} w(t)N(t)\,dt<0,
	$$
	$$
	u(\xi_2)=\int_{\xi_2}^{\alpha_2} w(t)N(t)\,dt>0.
	$$
Also, $u'(A)=-w(A)N(A)>0$ for $A\in(\xi_1,\xi_2)$ and $u'(A)<0$ for $A\in (\alpha_3, \xi_1)\cup (\xi_2, \alpha_2)$. Together with $u(\alpha_3)=u(\alpha_2)=0$, we conclude that there exists a unique point $A_*\in(\alpha_3,\alpha_2)$ such that $u(A_*)=0$. This proves $\sigma_1\cap I_5=\{A_*\}$.
	
	Second, $N(\alpha_3)>0$ and $N(\alpha_2)\leq0$. Since $A_0\in (\alpha_3, \alpha_2)$ and $N(A_0)<0$, so \eqref{ece} gives $\alpha_3<\xi_1<\alpha_2\leq\xi_2$ and
	\begin{equation}\nonumber
		u(A)=-\int_{\alpha_3}^Aw(t)N(t)\,dt<0,\quad A\in(\alpha_3,\xi_1]
	\end{equation}
	\begin{equation}\nonumber
		u(A)=\int_{A}^{\alpha_2}w(t)N(t)\,dt<0,\quad A\in[\xi_1,\alpha_2).
	\end{equation}
	Therefore, $\sigma_1\cap I_5=\emptyset$.
	
	Third, $N(\alpha_3)\leq 0$ and $N(\alpha_2)>0$. Since $A_0\in (\alpha_3, \alpha_2)$ and $N(A_0)<0$, so \eqref{ece} gives $\xi_1\leq \alpha_3<\xi_2<\alpha_2$ and
	\begin{equation}\nonumber
		u(A)=-\int_{\alpha_3}^Aw(t)N(t)\,dt>0,\quad A\in(\alpha_3,\xi_2],
	\end{equation}
	\begin{equation}\nonumber
		u(A)=\int_{A}^{\alpha_2}w(t)N(t)\,dt>0,\quad A\in[\xi_2,\alpha_2).
	\end{equation}
	Therefore, $\sigma_1\cap I_5=\emptyset$.
	
	Fourth, $N(\alpha_3)\leq0$ and $N(\alpha_2)\leq0$. Then \eqref{ece} gives $\xi_1\leq \alpha_3<\alpha_2\leq \xi_2$ and
	\begin{equation}\nonumber
		\int_{\alpha_3}^{\alpha_2}w(t)N(t)\,dt<0,
	\end{equation}
	which is a contradiction. 
	\end{proof}
	
	Summing up the above, we have established the following result:
	\begin{theorem}\label{conditional intersection}
		Assume that $0<p<1/4$, then
		\begin{equation}
			\sigma_1\cap\sigma_2=
			\begin{cases}
				\mathcal{Z}\cup\{A_*\}, & \text{if }N(\alpha_3)>0 \text{ and }N(\alpha_2)>0; \\
				\mathcal{Z}, & \text{otherwise}.
			\end{cases}
		\end{equation}
	\end{theorem}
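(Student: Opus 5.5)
The theorem is essentially a bookkeeping step that assembles the results of this section, so I would organize the proof as a decomposition of $\sigma_1\cap\sigma_2$ along the structure of $\sigma_2$ given by Corollary \ref{six intervals}. That corollary gives
\[
\sigma_2=\mathcal Z\cup\bigcup_{j=1}^6 I_j,
\]
and the union is disjoint. Hence
\[
\sigma_1\cap\sigma_2=(\mathcal Z\cap\sigma_1)\cup\bigcup_{j=1}^6 (I_j\cap\sigma_1),
\]
and it suffices to determine each piece separately.

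\emph{The set $\mathcal Z$.} For each $A\in\mathcal Z$ we have $Q(A)=0$. By Case 2 of Section \ref{section 2}, this forces $(r(A),s(A))\in\frac12\mathbb Z^2$, so $\Delta_1(A)=\cos 2\pi s(A)\in\{\pm1\}$. Therefore $\mathcal Z\subset\sigma_1$, as already observed before Corollary \ref{six intervals}, and $\mathcal Z\cap\sigma_1=\mathcal Z$.

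\emph{The intervals.} For $j=1,4,6$, Lemma \ref{rule out 3} gives $I_j\cap\sigma_1=\emptyset$. For $j=2,3$, the lemma on $I_2,I_3$ gives the same conclusion; its input is the derivative formula \eqref{derivative}, the endpoint values $u(l_J)=u(r_J)=0$, and Lemma \ref{lemma57}, which says that $N$ has exactly two simple negative zeros. For $j=5$, the last lemma settles the matter: $I_5\cap\sigma_1=\{A_*\}$ with $A_*$ unique if $N(\alpha_3)>0$ and $N(\alpha_2)>0$, and $I_5\cap\sigma_1=\emptyset$ otherwise.

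Taking the union of these pieces gives exactly the two stated cases. In the first case $A_*\notin\mathcal Z$ because $A_*$ lies in the open interval $I_5$, on which $Q>0$.

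The only delicate point is the sign convention: replacing $u$ by $-u$ when choosing $k=1$ affects neither the zero set of $u$ nor the computed derivative up to sign. The $I_5$ analysis is symmetric under $u\mapsto-u$, since the conclusion depends only on where $u$ vanishes. So no real obstacle remains; the substantive work was already done in the preceding lemmas.
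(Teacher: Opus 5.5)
Your proposal is correct and follows the paper's route exactly. The theorem comes from intersecting $\sigma_1$ with the decomposition $\sigma_2=\mathcal Z\cup\bigcup_{j=1}^6 I_j$ of Corollary \ref{six intervals}, using $\mathcal Z\subset\sigma_1$ from Case 2 and the three interval lemmas (for $I_1,I_4,I_6$; for $I_2,I_3$; and for $I_5$); your added remark that $A_*\notin\mathcal Z$ because $Q>0$ on $I_5$ is also correct, and the paper leaves it implicit.
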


	We now give a necessary and sufficient condition on $p$ for the first case of Theorem \ref{conditional intersection}. Write
	\begin{equation}\label{endpoint parameters}
		D=e_1-e_2>0,\quad m=\frac{e_3-e_2}{D}\in (0,1),\quad q=\sqrt{1-m}\in (0,1).
	\end{equation}
We use the parameter $m$ in the complete elliptic integrals
	\begin{equation}\label{endpoint elliptic integrals}
		K(m)=\int_0^{\pi/2}\frac{d\theta}{\sqrt{1-m\sin^2\theta}},\quad
		E(m)=\int_0^{\pi/2}\sqrt{1-m\sin^2\theta}\,d\theta,
	\end{equation}
	and set $\kappa=1-E(m)/K(m)$.
	
	\begin{theorem}\label{endpoint positivity theorem}
		For every fixed $b>0$, there exist unique real numbers
		\begin{equation}\label{endpoint p order}
			0<p_-(b)<\frac16<p_+(b)<\frac14,
		\end{equation}
		such that, for $0<p<1/4$,
		\begin{equation}\label{endpoint iff}
			N(\alpha_2)>0\text{ and }N(\alpha_3)>0
			\quad\Longleftrightarrow\quad p\in(p_-(b),p_+(b)).
		\end{equation}
		More precisely, set
		\begin{align}
			U_2&=\frac{m}{\kappa}+\sqrt{\frac{m^2}{\kappa^2}-m},\label{endpoint U2}\\
			U_3&=m+\frac{m(1-m)}{m-\kappa}
			+\sqrt{\left(\frac{m(1-m)}{m-\kappa}\right)^2+m(1-m)}.
			\label{endpoint U3}
		\end{align}
		Then $1+q<U_3<U_2$, and the endpoints are uniquely determined in $(0,1/4)$ by
		\begin{equation}\label{endpoint inverse wp}
			\wp(p_-(b))=e_2+DU_2,\quad \wp(p_+(b))=e_2+DU_3.
		\end{equation}
	\end{theorem}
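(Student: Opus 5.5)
The plan is to turn $N(\alpha_k)$ into an explicit expression in the single variable $x=\wp(p)$, normalized by $u=(x-e_2)/D$, and then study its sign.

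\textbf{Step 1: rewrite $N(\alpha_k)$.} Since $\alpha_k=A_{0,k}$ is a zero of $R_1$, I would first reduce $N$ modulo $R_1$ (or modulo $Q$). The identities $\sum\alpha_i^2=12\wp(2p)$ and $\alpha_1\alpha_2\alpha_3=-4\wp'(2p)$ (Step 2 of the proof of the theorem on zeros of $Q$) allow all symmetric data to be expressed through the $\alpha_i$. Writing $N(A)=A^6-8\wp(2p)A^4+4\wp'(2p)A^3+16\wp'(2p)^2+4\eta_1(A^4-4\wp'(2p)A)-16\wp'(2p)\wp(2p)A$ splits $N$ into an $\eta_1$-free part and an $\eta_1$-linear part. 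At $A=\alpha_k$ I expect a factorization
$$N(\alpha_k)=P_k(\alpha)\,\bigl(\eta_1-\Theta_k\bigr),$$
where $P_k$ and $\Theta_k$ are rational in $\alpha_1,\alpha_2,\alpha_3$. For example, $\alpha_k^4-4\wp'(2p)\alpha_k=\alpha_k(\alpha_k^3+\alpha_1\alpha_2\alpha_3)$ has a definite sign. The positivity conditions $N(\alpha_k)>0$ then become $\eta_1\gtrless\Theta_k$.

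\textbf{Step 2: pass to elliptic integrals.} For $\tau=ib$ I would use the classical formula $\eta_1=\zeta(1/2)\cdot 2$, which in Legendre normal form gives
$$\eta_1=-e_2+D\bigl(1-E(m)/K(m)\bigr)=-e_2+D\kappa,$$
with $m=(e_3-e_2)/D$. Normalizing $e_2=0$ leaves $e_1=D$ and $e_3=mD$ modulo the constraint $\sum e_j=0$; equivalently, I would shift by $e_2$ throughout. Every quantity in $\Theta_k$ is a rational function of $\wp(p)$ and the $e_j$. Here $\wp'(p)^2=4\prod(\wp(p)-e_j)$, the relation $\wp''=6\wp^2-g_2/2$ holds, and $t_j=\wp'(p)/(\wp(p)-e_j)$. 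Homogeneity in $D$ removes $D$, so each condition becomes a rational or quadratic inequality in $u=(\wp(p)-e_2)/D>1$ with coefficients depending only on $m$ and $\kappa$. Solving these quadratics should produce exactly the roots $U_2$ and $U_3$ of the statement. The conditions become $u<U_2$ (from $N(\alpha_2)>0$) and $u>U_3$ (from $N(\alpha_3)>0$), or the reverse, with $p<1/4$ corresponding to $u>u(1/4)$. Since $\wp$ decreases on $(0,1/2)$, the condition $U_3<u<U_2$ translates into $p\in(p_-,p_+)$, giving \eqref{endpoint inverse wp}.

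\textbf{Step 3: ordering and the value $1/6$.} I then need three facts.
\begin{itemize}
\item[(a)] $u(1/4)=1+q$. This holds because $\wp(1/4)=e_1+\sqrt{(e_1-e_2)(e_1-e_3)}$, by the half-argument formula.
\item[(b)] $1+q<U_3<U_2$. These are inequalities between $m$ and $\kappa$. I would derive them from the standard bounds $0<\kappa<m$, more precisely $m/2<\kappa<m$ from series estimates of $E/K$, and use the non-reality of the square root in $U_2$ to check that $m/\kappa^2>1$.
\item[(c)] $p_-<1/6<p_+$. At $p=1/6$ we have $3p=1/2$, so the zeros $a_j$ interact with half-periods. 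I would show directly that $N(\alpha_2)$ and $N(\alpha_3)$ are both positive at $p=1/6$, possibly via the triplication formula ($\wp(3p)=e_1$), which gives an algebraic relation for $\wp(1/6)$.
\end{itemize}
Uniqueness of $p_\pm$ follows from strict monotonicity of $\wp$ on $(0,1/4)$.

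\textbf{Main obstacle.} The hard part is Step 1 together with the sign bookkeeping: verifying that $N(\alpha_k)$ factors so that its sign is governed by a quadratic in $u$ whose relevant root is $U_k$, and fixing the sign of the prefactor $P_k$ on the whole range. I would attack this with symbolic elimination in $x=\wp(p)$, replacing $\wp''(p)/\wp'(p)$ by $C$ and using $t_j$. The next hardest part is inequality (b), which requires sharp comparisons between $\kappa$ and $m$ valid for all $m\in(0,1)$.
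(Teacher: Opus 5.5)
\textbf{Overall structure and a sign error.} Your architecture matches the paper's. You write $N(\alpha_k)$ as a positive multiple of an expression linear in $\eta_1$; the paper has $N(\alpha_i)=8\alpha_i^2[(\eta_1+e_i)(x_i+B_i/x_i)+2B_i]$ with $x_i=\wp(p)-e_i$ and $B_i=(e_i-e_j)(e_i-e_k)$. You then pass to $t=(\wp(p)-e_2)/D$ and get one monotone threshold for each $k$. Your remark that the $\eta_1$-coefficient at $A=\alpha_k$ equals $4\alpha_k(\alpha_k^3+\alpha_1\alpha_2\alpha_3)>0$ is a clean way to fix its sign.

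There is, however, a sign slip in Step 2. The Legendre-form identity is $\eta_1+e_1=D\,E(m)/K(m)$, i.e.\ $\eta_1+e_2=-D\kappa$, not $+D\kappa$. With your sign, $N(\alpha_2)$ would be a positive combination of $x_2=Dt>0$ and $B_2=D^2m>0$ for every $p$, so $U_2$ would never appear. With the correct sign you get
\[
f_2(t)=2m-\kappa\bigl(t+\tfrac mt\bigr),\qquad f_3(t)=(m-\kappa)\bigl(t-m-\tfrac{m(1-m)}{t-m}\bigr)-2m(1-m).
\]

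\textbf{The genuine gap: your bounds on $\kappa$ are too weak for (b) and (c).} The bounds $0<\kappa<m$ do give unique zeros $U_2,U_3>1+q$, since $f_2(1+q)=2(m-\kappa)$ and $f_3(1+q)=-2\kappa q^2$. They cannot give $U_3<U_2$: as $\kappa\uparrow m$, $U_2\to 1+q$ while $U_3\to\infty$.

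To see what is needed, write $f_k(t)>0\Leftrightarrow \kappa<H_k(t)$ with
\[
H_2(t)=\frac{2mt}{t^2+m},\qquad H_3(t)=m-\frac{2m(1-m)(t-m)}{(t-m)^2-m(1-m)}.
\]
Here $H_2$ is decreasing and $H_3$ increasing on $t>1+q$. So $U_3<U_2$ holds iff $\kappa<H_2(t_*)$, where $t_*$ is their crossing point. As $m\to0$ (i.e.\ $b\to\infty$) one finds $t_*=4-\tfrac32m+O(m^2)$ and
\[
H_2(t_*)=\tfrac m2+\tfrac5{32}m^2+O(m^3),\qquad \kappa=\tfrac m2+\tfrac1{16}m^2+O(m^3).
\]
Any usable upper bound for $\kappa$ must therefore be sharp at order $m^2$, and the lower bound $\kappa>m/2$ is irrelevant.

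The missing ingredient is $E(m)/K(m)>\sqrt{1-m}$, i.e.\ $\kappa<\kappa_0:=1-q$. The paper proves it by pairing $\theta$ with $\pi/2-\theta$ in the integrals. At $\kappa=\kappa_0$ the roots are explicit:
\[
U_2^0=1+q+\sqrt{2q(1+q)}\;>\;U_3^0=1+q+q\sqrt{2(1+q)}.
\]
Since $f_2$ and $f_3$ decrease in $\kappa$ for $t>1+q$, this gives $U_3<U_3^0<U_2^0<U_2$.

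Part (c) needs the same bound plus one identification. Your triplication idea is equivalent to the paper's relation $\wp(2p)=\wp(p+\tfrac12)$ at $p=1/6$. That relation becomes $t^4-4t^3+6mt^2-4m^2t+m^2=0$, which is exactly the numerator of $H_2-H_3$. Hence $t(1/6)=t_*$, and both $N(\alpha_2)$ and $N(\alpha_3)$ are positive there because $H_2(t_*)>\kappa_0>\kappa$ (the first inequality follows from $t_*\in(U_3^0,U_2^0)$). Without this identification and the sharp bound on $\kappa$, your plan to ``show directly that both are positive'' has no mechanism.
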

	
	\begin{proof}
		Firstly, we make some simplifications to $N(\alpha_i)$. Let $c=\wp(p)$, $y=\wp'(p)$, and, for $\{i,j,k\}=\{1,2,3\}$, denote
		\begin{equation}\nonumber
			x_i=c-e_i,\quad B_i=(e_i-e_j)(e_i-e_k)=3e_i^2-\frac{g_2}{4}.
		\end{equation}
		We claim that
		\begin{equation}\label{endpoint factor}
			N(\alpha_i)=8\alpha_i^2
			\left[(\eta_1+e_i)\left(x_i+\frac{B_i}{x_i}\right)+2B_i\right].
		\end{equation}
		Here is an algebraic verification. Fix $i$ and write $e=e_i$, $x=x_i$, $B=B_i$,
		\[
		T=x^2+3ex+B,\quad R=x^2-B,\quad S=3x^2+6ex+B.
		\]
		The differential equation for $\wp$ gives
		\[
		y^2=4xT,\quad \wp''(p)=2S,\quad
		C=-\frac{2S}{y},\quad \alpha_i=-\frac{2R}{y}.
		\]
		Let $W_0$ and $D_0$ be the following two polynomials:
		\begin{align*}
			W_0&=S^2-2(x+e)y^2
			=x^4+4ex^3+(12e^2-2B)x^2+4Bex+B^2,\\
			D_0&=-2S^3+6(x+e)Sy^2-y^4
			=2R(x^4+6ex^3+6Bx^2+6Bex+B^2).
		\end{align*}
		By \eqref{add5} and \eqref{add6}, we know $\wp(2p)=W_0/y^2$ and $\wp'(2p)=D_0/y^3$. Therefore, substituting $A_0=-2R$ into \eqref{polynomial N} gives
		\begin{align*}
			y^6N(\alpha_i)
			&=A_0^6+4(\eta_1y^2-2W_0)A_0^4+4D_0A_0^3
			-16D_0(\eta_1y^2+W_0)A_0+16D_0^2\\
			&=512xR^2T^2\bigl((\eta_1+e)(x^2+B)+2Bx\bigr).
		\end{align*}
		The last equality follows by collecting the coefficient of $\eta_1$ and the constant term: they are respectively $512xR^2T^2(x^2+B)$ and $512xR^2T^2(e(x^2+B)+2Bx)$. Using $y^4=16x^2T^2$ and $\alpha_i^2=4R^2/y^2$, we can get \eqref{endpoint factor}.
		
		Notice that the standard Weierstrass-Jacobi formulas \cite[Section 23.6]{DLMF} give
		\begin{equation}\label{endpoint wp Jacobi}
			\sqrt D=2K(m),\quad
			\wp(z)=e_2+\frac{D}{\operatorname{sn}^2(\sqrt D\,z\mid m)},\quad
			\eta_1+e_1=D\frac{E(m)}{K(m)}.
		\end{equation}

We claim that
		\begin{equation}\label{endpoint kappa bound}
			\frac{E(m)}{K(m)}>\sqrt{1-m}=q,\quad 0<\kappa<1-q<m.
		\end{equation}
		To prove \eqref{endpoint kappa bound}, set $a(\theta)=\sqrt{1-m\sin^2\theta}$ and $b(\theta)=\sqrt{1-m\cos^2\theta}$. Pairing $\theta$ with $\pi/2-\theta$ gives
		\[
		E-qK=\frac12\int_0^{\pi/2}(a+b)\left(1-\frac{q}{ab}\right)d\theta>0,
		\]
		where we have used the fact that $$a^2b^2=1-m+m^2\sin^2\theta\cos^2\theta>q^2,\quad\text{for }\;0<\theta<\pi/2.$$ Also, since $E<K$ and $m-(1-q)=q(1-q)>0$, then we can get \eqref{endpoint kappa bound}.
		
		Set
		\begin{equation}\nonumber
			t=t(p)\coloneqq\frac{\wp(p)-e_2}{D}.
		\end{equation}
		The half-period addition formula at $p=1/4$ gives that
		\[
		\wp(1/4)=e_1+\sqrt{(e_1-e_2)(e_1-e_3)}=e_1+Dq.
		\]
		Thus $p\mapsto t(p)$ is strictly decreasing from $(0,1/4)$ onto $(1+q,\infty)$. Moreover,
		\[
		\eta_1+e_2=-D\kappa,\quad \eta_1+e_3=D(m-\kappa),\quad
		B_2=D^2m,\quad B_3=-D^2m(1-m).
		\]
		Consequently, \eqref{endpoint factor} becomes
		\begin{align}
			N(\alpha_2)&=8\alpha_2^2D^2f_2(t),&
			f_2(t)&=2m-\kappa\left(t+\frac mt\right),\label{endpoint f2}\\
			N(\alpha_3)&=8\alpha_3^2D^2f_3(t),&
			f_3(t)&=(m-\kappa)\left(t-m-\frac{m(1-m)}{t-m}\right)-2m(1-m).
			\label{endpoint f3}
		\end{align}
		Note that the factors $8\alpha_2^2D^2$ and $8\alpha_3^2D^2$ are strictly positive. For $t>t_0\coloneqq1+q$,
		\[
		f_2'(t)=-\kappa\left(1-\frac m{t^2}\right)<0,\quad
		f_3'(t)=(m-\kappa)\left(1+\frac{m(1-m)}{(t-m)^2}\right)>0.
		\]
		At the left endpoint,
		\[
		f_2(t_0)=2(m-\kappa)>0,\quad f_3(t_0)=-2\kappa q^2<0,
		\]
		whereas $f_2(t)\to-\infty$ and $f_3(t)\to+\infty$ as $t\to+\infty$. Thus, $f_j(t)$ has a unique simple zero $U_j$ in $(t_0,\infty)$ for $j=2,3$. Solving the corresponding quadratic equations gives exactly $U_2,U_3$ in \eqref{endpoint U2} and \eqref{endpoint U3}, with
		\begin{equation}\label{endpoint signs t}
			N(\alpha_2)>0\Longleftrightarrow t<U_2,\quad
			N(\alpha_3)>0\Longleftrightarrow t>U_3.
		\end{equation}
		
Now we prove $U_3<U_2$. Temporarily replace $\kappa$ by $\kappa_0=1-q$. The two zeros then can be simplified to
		\begin{equation}\label{endpoint comparison roots}
			U_2^0=1+q+\sqrt{2q(1+q)},\quad
			U_3^0=1+q+q\sqrt{2(1+q)}.
		\end{equation}
		Obviously,
		\[
		U_2^0-U_3^0=\sqrt{2(1+q)}(\sqrt q-q)>0.
		\]
		When $t>t_0$, both $f_2,f_3$ strictly decrease as $\kappa$ increases. For $f_3$, we have used the fact that
		\[
		t-m>q(1+q)>\sqrt{m(1-m)},
		\]
		so that $t-m-m(1-m)/(t-m)>0$. Since $\kappa<\kappa_0$, the strict monotonicity in $t$ gives
		\begin{equation}\label{endpoint strong order}
			1+q<U_3<U_3^0<U_2^0<U_2.
		\end{equation}
		This proves that the interval of simultaneous positivity is nonempty. Finally, using \eqref{endpoint signs t}, we get \eqref{endpoint iff} and \eqref{endpoint inverse wp}. 
		%The  signs and simplicity of the zeros in the $p$ variable follow from $t'(p)=\wp'(p)/D<0$.
		
		It remains to prove that $p_-<1/6<p_+$. Define
		\begin{equation}\label{endpoint H}
			H_2(t)\coloneqq\frac{2mt}{t^2+m},\quad
			H_3(t)\coloneqq m-\frac{2m(1-m)(t-m)}{(t-m)^2-m(1-m)},\quad\text{for }t>t_0.
		\end{equation}
		The denominators are positive, and
		\[
		f_2(t)>0\Longleftrightarrow\kappa<H_2(t),\quad
		f_3(t)>0\Longleftrightarrow\kappa<H_3(t).
		\]
		Direct differentiation gives that
		\[
		H_2'(t)=\frac{2m(m-t^2)}{(t^2+m)^2}<0,\quad
		H_3'(t)=\frac{2m(1-m)((t-m)^2+m(1-m))}{((t-m)^2-m(1-m))^2}>0.
		\]
		Also,
		\[
		H_3(U_3^0)=\kappa_0<H_2(U_3^0),\quad
		H_2(U_2^0)=\kappa_0<H_3(U_2^0).
		\]
		It follows that $H_2$ and $H_3$ have a unique intersection $t_*$ in $(t_0,\infty)$ and
		\begin{equation}\label{endpoint intersection}
			U_3^0<t_*<U_2^0,\quad H_2(t_*)=H_3(t_*)>\kappa_0>\kappa.
		\end{equation}
		By a direct calculation, we have
		\begin{equation}\label{endpoint difference}
			H_2(t)-H_3(t)=
			-\frac{m(t^4-4t^3+6mt^2-4m^2t+m^2)}
			{(t^2+m)((t-m)^2-m(1-m))}.
		\end{equation}
		When $p=1/6$, we have $2p=1/2-p$. By the addition formula
		$$\wp\left(z+\frac12\right)=e_1+\frac{(e_1-e_2)(e_1-e_3)}{\wp(z)-e_1}$$
		and
		\eqref{add5}, we know
		\begin{equation}\nonumber
			\frac{\wp(2p)-e_2}{D}=\frac{t-m}{t-1},\quad
			\frac{\wp(2p)-e_2}{D}=\frac{(t^2-m)^2}{4t(t-1)(t-m)}.
		\end{equation}
		Equating these expressions yields that
		\[
		(t^2-m)^2=4t(t-m)^2,
		\]
		or equivalently,
		$$
		t^4-4t^3+6mt^2-4m^2t+m^2=0.
		$$
		Therefore, by \eqref{endpoint difference} and uniqueness, we obtain that $t(1/6)=t_*$. By \eqref{endpoint intersection}, we know both values of $N$ are strictly positive at $p=1/6$. Finally, \eqref{endpoint strong order} and \eqref{endpoint intersection} give $U_3<t(1/6)<U_2$, and thus $p_-<1/6<p_+$. The proof is complete.
	\end{proof}
	In particular, the condition in the first case of Theorem \ref{conditional intersection} is not empty for any rectangular torus. The following corollary is immediate.
	\begin{corollary}\label{corollary final 0 1/4}
		Assume that $0<p<1/4$, then
		\begin{equation}
			\sigma_1\cap\sigma_2=
			\begin{cases}
				\mathcal{Z}\cup\{A_*\}, & \text{if } p\in(p_-(b),p_+(b)); \\
				\mathcal{Z}, & \text{otherwise}.
			\end{cases}
		\end{equation}
	\end{corollary}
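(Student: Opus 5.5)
This corollary follows by combining Theorem \ref{conditional intersection} with Theorem \ref{endpoint positivity theorem}, so the plan is short. Theorem \ref{conditional intersection} is already established for $0<p<1/4$. It states that $\sigma_1\cap\sigma_2$ equals $\mathcal{Z}\cup\{A_*\}$ exactly when $N(\alpha_3)>0$ and $N(\alpha_2)>0$, and equals $\mathcal{Z}$ otherwise. Here $A_*\in I_5=(\alpha_3,\alpha_2)$ is the unique zero of $u$ in that interval.

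Theorem \ref{endpoint positivity theorem} gives the equivalence \eqref{endpoint iff}: for every $b>0$ and $0<p<1/4$, the simultaneous positivity $N(\alpha_2)>0$, $N(\alpha_3)>0$ holds if and only if $p\in(p_-(b),p_+(b))$. Substituting this equivalence into the case distinction of Theorem \ref{conditional intersection} gives exactly the two cases of the corollary.

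Two points should be checked explicitly when writing this out. First, the hypothesis $\wp'(2p)\neq0$ is used throughout Section \ref{section 5}. It holds automatically here, because $2p\in(0,1/2)$ is not a half period. Second, $\mathcal{Z}$ consists of nine distinct points, since the zeros of $Q$ are pairwise distinct. The point $A_*$ is not in $\mathcal{Z}$, because it lies in the open interval $I_5$, on which $Q>0$. So in the first case the union $\mathcal{Z}\cup\{A_*\}$ is disjoint and genuinely adds one point.

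There is no real obstacle: all the analytic work is done in the two cited theorems. The only care needed is to confirm that the standing assumptions ($\tau=ib$, $p\in(0,1/4)$, and the apparent condition \eqref{apparent condition}) match in both results.
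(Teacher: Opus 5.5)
Your proposal is correct and matches the paper, which calls the corollary immediate: you substitute the equivalence \eqref{endpoint iff} from Theorem \ref{endpoint positivity theorem} into the case distinction of Theorem \ref{conditional intersection}. Your two side checks are also sound: $\wp'(2p)\neq0$ because $2p\in(0,1/2)$ is not a half period, and $A_*\notin\mathcal{Z}$ because $A_*$ lies in the open interval $I_5$, where $Q>0$.
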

	
	Now, we are ready to prove Theorem \ref{main result for 0,1/4}.
	
	\begin{proof}[Proof of Theorem \ref{main result for 0,1/4}] Let $p\in (0, \frac14)$.
		By \eqref{eq: data5} and Theorem \ref{solution = unitary}, we know that the number of one-parameter scaling families of solutions (or equivalently, number of even solutions) equals $$\#((\sigma_1\cap\sigma_2)\setminus\mathcal{Z}).$$
Combining Corollary \ref{corollary final 0 1/4}, we find that the curvature equation \eqref{main problem} has a unique scaling family of solutions if and only if $p\in(p_-(b),p_+(b))$, and each such family has a unique even member. Specifically, when $p\not\in (p_-(b),p_+(b))$, the curvature equation \eqref{main problem} has no solutions.
	\end{proof}
	
	\section{Proof of Theorem \ref{main result general}}\label{section 6}

	In this final section, we consider the general case $\wp(p)\in\R$, $p\not\in E_\tau[2]$ and $\wp'(2p)\neq 0$, and prove Theorem \ref{main result general}. In the sequel, we write $\Delta_k(A)=\Delta_k(A; p,\tau)$ and $\sigma_k=\sigma_k(p,\tau)$ to emphasize their dependence on $p,\tau$.
	
	As mentioned in Section 1, we may assume that
	\begin{equation}\nonumber
		\begin{aligned}
			p&\in\left(0,\frac{1}{4}\right)\cup \left(\frac{1}{4},\frac{1}{2}\right)\cup \left(\frac{1}{2},\frac{1}{2}+\frac{\tau}{4}\right)\cup \left(\frac{1}{2}+\frac{\tau}{4},\frac{1}{2}+\frac{\tau}{2}\right)\\
			&\quad\cup \left(\frac{1}{2}+\frac{\tau}{2},\frac{1}{4}+\frac{\tau}{2}\right)\cup \left(\frac{1}{4}+\frac{\tau}{2},\frac{\tau}{2}\right)\cup \left(\frac{\tau}{2},\frac{\tau}{4}\right)\cup \left(\frac{\tau}{4},0\right).
		\end{aligned}
	\end{equation}
	We will prove that all other cases can be transformed to the basic case $p\in (0, \frac14)\cup(\frac14, \frac12)$.
	
	For simplicity, we set
	\begin{equation}\nonumber
		\mathcal{S}_p(z)=\wp(z+p)+\wp(z-p),\quad \mathcal{H}_p(z)=\zeta(z+p)-\zeta(z-p)-\zeta(2p),
	\end{equation}
	then
	\begin{equation}\nonumber
		\begin{aligned}
			I(z;p,A,\tau)&=\bigg[ 2(\wp(z+p)+\wp(z-p)) + A(\zeta(z+p)-\zeta(z-p))\\
			&\quad +\frac{A^2}{4} - A\zeta(2p) - \frac{2\wp^\prime(2p)}{A} - \wp(2p) \bigg]\\
			&=2\mathcal{S}_p(z)+A\mathcal{H}_p(z)+\frac{A^2}{4} - \frac{2\wp^\prime(2p)}{A} - \wp(2p).
		\end{aligned}
	\end{equation}
	Suppose that $h\in E_\tau[2]$ is a half-period and $\lambda=2h\in\Lambda_\tau$, then we know
	\begin{equation}\nonumber
		\mathcal{S}_{p+h}(z+h)=\wp(z+p+2h)+\wp(z-p)=\mathcal{S}_p(z)
	\end{equation}
	and
	\begin{equation}\nonumber
		\mathcal{H}_{p+h}(z+h)=\zeta(z+p+2h)-\zeta(z-p)-\zeta(2p+2h)=\mathcal{H}_p(z).
	\end{equation}
	Also, by the property of $\wp(z)$, we know
	\begin{equation}\nonumber
		\wp(2p+2h)=\wp(2p)\quad\text{and}\quad \wp'(2p+2h)=\wp'(2p).
	\end{equation}
	Therefore, we have
	\begin{equation}\label{identity for I for translation}
		I(z+h;p+h,A,\tau)=I(z;p,A,\tau).
	\end{equation}
	Suppose that
	\begin{equation}\nonumber
		y''(z)=I(z;p,A,\tau)y(z)
	\end{equation}
	satisfying that
	\begin{equation}\nonumber
		y(z+1)=\varepsilon_1y(z)\quad\text{and}\quad y(z+\tau)+\varepsilon_2y(z)
	\end{equation}
	for some constants $\varepsilon_1,\varepsilon_2\in\C\backslash\{0\}$. Define
	\begin{equation}\nonumber
		\tilde{y}(z)\coloneqq y(z-h),
	\end{equation}
	then it is easy to verify that
	\begin{equation}\nonumber
		\tilde{y}''(z)=I(z-h;p,A,\tau)y(z-h)=I(z;p+h,A,\tau)\tilde{y}(z),
	\end{equation}
	where we have used the identity \eqref{identity for I for translation}. Also, we have
	\begin{equation}\nonumber
		\tilde{y}(z+1)=y(z-h+1)=\varepsilon_1y(z-h)=\varepsilon_1\tilde{y}(z)
	\end{equation}
	and
	\begin{equation}\nonumber
		\tilde{y}(z+\tau)=y(z-h+\tau)=\varepsilon_2y(z-h)=\varepsilon_2\tilde{y}(z).
	\end{equation}
	Therefore, we know
	\begin{equation}\nonumber
		\Delta_j(A;p+h,\tau)=\Delta_j(A;p,\tau),\quad j=1,2,
	\end{equation}
	and hence
	\begin{equation}\label{sigma -- 1st}
		\sigma_j(p+h;\tau)=\sigma_j(p;\tau),\quad j=1,2.
	\end{equation}
	Therefore, we only need to consider the case that
	\begin{equation}
		\begin{aligned}\label{simple p}
			p&\in\left(0,\frac{1}{4}\right)\cup \left(\frac{1}{4},\frac{1}{2}\right)\cup \left(0,\frac{\tau}{4}\right)\cup \left(\frac{\tau}{4},\frac{\tau}{2}\right).
		\end{aligned}
	\end{equation}
	
	Now, consider the transform $p\mapsto -p$. Notice that
	\begin{equation}\nonumber
		\mathcal{S}_{-p}(z)=\wp(z-p)+\wp(z+p)=\mathcal{S}_p(z)
	\end{equation}
	and
	\begin{equation}\nonumber
		\mathcal{H}_{-p}(z)=\zeta(z-p)-\zeta(z+p)+\zeta(2p)=-\mathcal{H}_p(z).
	\end{equation}
	Since $\wp(2p)=\wp(-2p)$ and $\wp'(2p)=-\wp'(-2p)$, it is not hard to see that
	\begin{equation}\nonumber
		\begin{aligned}
			I(z;-p,A,\tau)&=2\mathcal{S}_{-p}(z)+A\mathcal{H}_{-p}(z)+\frac{A^2}{4} - \frac{2\wp^\prime(-2p)}{A} - \wp(-2p)\\
			&=2\mathcal{S}_{p}(z)-A\mathcal{H}_{p}(z)+\frac{A^2}{4} + \frac{2\wp^\prime(2p)}{A} - \wp(2p)\\
			&=I(z;p,-A,\tau).
		\end{aligned}
	\end{equation}
	By a similar argument, we know
	\begin{equation}\nonumber
		\Delta_j(A;-p,\tau)=\Delta_j(-A;p,\tau),\quad j=1,2,
	\end{equation}
	and hence
	\begin{equation}\label{sigma --- 2nd}
		\sigma_j(-p;\tau)=-\sigma_j(p;\tau),\quad j=1,2.
	\end{equation}
	By taking $h=\frac{1}{2}$ and $h=\frac{\tau}{2}$, we know for $j=1,2$ that
	\begin{equation}\label{sigma final 1}
		\sigma_j\left(\frac{1}{2}-p;\tau\right)=-\sigma_j(p;\tau)
	\end{equation}
	and
	\begin{equation}\label{sigma final 2}
		\sigma_j\left(\frac{\tau}{2}-p;\tau\right)=-\sigma_j(p;\tau).
	\end{equation}
	Therefore, without loss of generality, we may assume that
	\begin{equation}\nonumber
		p\in\left(0,\frac{1}{4}\right)\cup \left(0,\frac{\tau}{4}\right).
	\end{equation}
	Recall that when $p\in \left(0,\frac{1}{4}\right)$, we have already figured out the structure of $\sigma_1\cap\sigma_2$, see Theorem \ref{conditional intersection}. Hence, we now focus on the case $p\in \left(0,\frac{\tau}{4}\right)$.
	
	Fix $\tau=ib$, $b>0$. Denote $p=iv$, $v\in(0,b/4)$. Let
	\begin{equation}\nonumber
		\hat{\tau}=-\frac{1}{\tau}=\frac{i}{b},\quad\hat{p}=\frac{p}{\tau}=\frac{v}{b}\in\left(0,\frac{1}{4}\right).
	\end{equation}
  Recall the modular property of $\wp(u;\tau)$ and $\zeta(u;\tau)$:
\begin{equation}\nonumber
	\wp(u;\hat{\tau})=\tau^2\wp(\tau u;\tau),\quad 
	\wp'(u;\hat{\tau})=\tau^3\wp'(\tau u;\tau),
\end{equation}
and
\begin{equation}\nonumber
	\zeta(u;\hat{\tau})=\tau\zeta(\tau u;\tau).
\end{equation}
Now, let $z=\tau u$ and $\hat{A}=\tau A$. By the above modular properties, we have
\begin{equation}\nonumber
	\wp(\tau u\pm p;\tau)=\tau^{-2}\wp(u\pm\hat{p};\hat{\tau}),
\end{equation}
\begin{equation}\nonumber
	\zeta(\tau u\pm p;\tau)=\tau^{-1}\zeta(u\pm \hat{p};\hat{\tau}).
\end{equation}
Moreover,
\begin{equation}\nonumber
	\wp(2p;\tau)=\tau^{-2}\wp(2\hat{p};\hat{\tau}),\quad \wp'(2p;\tau)=\tau^{-3}\wp'(2\hat{p};\hat{\tau}),\quad\zeta(2p;\tau)=\tau^{-1}\zeta(2\hat{p};\hat{\tau}).
\end{equation}
For simplicity, we write
\begin{equation}\nonumber
	\hat{\mathcal{S}}_{\hat{p}}(u)=\wp(u+\hat{p};\hat{\tau})+\wp(u-\hat{p};\hat{\tau}),\quad \hat{\mathcal{H}}_{\hat{p}}(u)=\zeta(u+\hat{p};\hat{\tau})-\zeta(u-\hat{p};\hat{\tau})-\zeta(2\hat{p};\hat{\tau}),
\end{equation}
then
\begin{equation}\nonumber
	\begin{aligned}
		\tau^2I(\tau u;p,A,\tau)&=\tau^2\bigg[ 2(\wp(\tau u+p;\tau)+\wp(\tau u-p;\tau)) + A(\zeta(\tau u+p;\tau)-\zeta(\tau u-p;\tau))\\
		&\quad +\frac{A^2}{4} - A\zeta(2p;\tau) - \frac{2\wp^\prime(2p;\tau)}{A} - \wp(2p;\tau) \bigg]\\
		&=2\hat{\mathcal{S}}_{\hat{p}}(u)+\tau A\hat{\mathcal{H}}_{\hat{p}}(u)+\frac{\tau^2A^2}{4}-\frac{2\wp'(2\hat{p};\hat{\tau})}{\tau A}-\wp(2\hat{p};\hat{\tau})\\
		&=2\hat{\mathcal{S}}_{\hat{p}}(u)+ \hat{A}\hat{\mathcal{H}}_{\hat{p}}(u)+\frac{\hat{A}^2}{4}-\frac{2\wp'(2\hat{p};\hat{\tau})}{\hat{A}}-\wp(2\hat{p};\hat{\tau})\\
		&=I(u;\hat{p},\hat{A},\hat{\tau}).
	\end{aligned}
\end{equation}
Suppose that
\begin{equation}\nonumber
	y''(u)=I(u;p,A,\tau)y(u)
\end{equation}
satisfying that
\begin{equation}\nonumber
	y(u+1)=\varepsilon_1y(u)\quad\text{and}\quad y(u+\tau)=\varepsilon_2y(u)
\end{equation}
for some constants $\varepsilon_1,\varepsilon_2\in\C\backslash\{0\}$. Define
\begin{equation}\nonumber
	\hat{y}(u)\coloneqq y(\tau u),
\end{equation}
then it is easy to verify that
\begin{equation}\nonumber
	\hat{y}''(u)=\tau^2 y''(\tau u)=\tau^2 I(\tau u;p,A,\tau)y(\tau u)=I(u;\hat{p},\hat{A},\hat{\tau})\hat{y}(u).
\end{equation}
Moreover, we have
\begin{equation}\nonumber
	\hat{y}(u+1)=y(\tau(u+1))=y(\tau u +\tau)=\varepsilon_2y(\tau u)=\varepsilon_2 \hat{y}(u)
\end{equation}
and
\begin{equation}\nonumber
	\hat{y}(u+\hat{\tau})=y(\tau(u+\hat{\tau}))=y(\tau u -1)=\varepsilon_1^{-1}y(\tau u)=\varepsilon_1^{-1} \hat{y}(u).
\end{equation}
These identities actually imply that
\begin{equation}\nonumber
	\Delta_1(A;p,\tau)=\Delta_2(\hat{A};\hat{p},\hat{\tau}),\quad \Delta_2(A;p,\tau)=\Delta_1(\hat{A};\hat{p},\hat{\tau}),
\end{equation}
hence
\begin{equation}
	\sigma_1(p;\tau)=\frac{1}{\tau}\sigma_2(\hat{p},\hat{\tau}),\quad \sigma_2(p;\tau)=\frac{1}{\tau}\sigma_1(\hat{p},\hat{\tau}).
\end{equation}
Therefore, we know
\begin{equation}\label{relation of stability set}
	\sigma_1(p;\tau)\cap \sigma_2(p;\tau)=\frac{1}{\tau}(\sigma_1(\hat{p},\hat{\tau})\cap \sigma_2(\hat{p},\hat{\tau})).
\end{equation}
For $p\in(0,\frac{\tau}{4})$ and $\tau=ib$, $b>0$, write
\begin{equation}\nonumber
	\hat{\alpha}_1=\frac{\alpha_2(\hat{p};\hat{\tau})}{\tau},\quad \hat{\alpha}_2=\frac{\alpha_1(\hat{p};\hat{\tau})}{\tau},\quad \hat{\alpha}_3=\frac{\alpha_3(\hat{p};\hat{\tau})}{\tau},
\end{equation}
\begin{equation}\nonumber
	\hat{\beta}_1=\frac{\beta_3(\hat{p};\hat{\tau})}{\tau},\quad \hat{\beta}_2=\frac{\beta_2(\hat{p};\hat{\tau})}{\tau},\quad \hat{\beta}_3=\frac{\beta_1(\hat{p};\hat{\tau})}{\tau},
\end{equation}
and $\hat{\mathcal{Z}}\coloneqq\{\hat{\beta}_1,\hat{\beta}_2,\hat{\beta}_3,\pm\hat{\alpha}_1,\pm\hat{\alpha}_2,\pm\hat{\alpha}_3\}$. 
Now, combining Theorem \ref{conditional intersection}, Theorem \ref{endpoint positivity theorem}, and \eqref{relation of stability set}, we finally get the following result:

\begin{theorem}\label{conditional intersection vertical}
	Assume that $p\in(0,\tau/4)$, then
	\begin{equation}
		\sigma_1\cap\sigma_2=
		\begin{cases}
			\hat{\mathcal{Z}}\cup\left\{\frac{{A}_*(\hat{p};\hat{\tau})}{\tau}\right\}, & \text{if }\hat{p}=\frac{p}{\tau}\in\left(p_-(\frac{1}{b}),p_+(\frac{1}{b})\right); \\
			\hat{\mathcal{Z}}, & \text{otherwise}.
		\end{cases}
	\end{equation}
\end{theorem}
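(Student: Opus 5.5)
The plan is to transport Theorem \ref{conditional intersection} and Corollary \ref{corollary final 0 1/4} from the horizontal case to the vertical case using the modular identity \eqref{relation of stability set}. Set $\hat\tau=-1/\tau=i/b$ and $\hat p=p/\tau=v/b$. Since $p=iv$ with $v\in(0,b/4)$, we get $\hat p\in(0,1/4)$, and $\hat\tau=i\cdot\frac1b$ is again rectangular with imaginary part $1/b$. We must also check that $(\hat p,\hat\tau)$ satisfies the standing assumption \eqref{pcondition}. This holds because $\wp'(2\hat p;\hat\tau)=\tau^3\wp'(2p;\tau)$ and $\hat p\notin E_{\hat\tau}[2]$, and in any case $2\hat p\in(0,1/2)$ directly gives $\wp'(2\hat p;\hat\tau)\neq0$. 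So all results of Section \ref{section 5} apply to $(\hat p,\hat\tau)$ with $b$ replaced by $1/b$.

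Next, Corollary \ref{corollary final 0 1/4} applied to $(\hat p,\hat\tau)$ gives the following. We have $\sigma_1(\hat p,\hat\tau)\cap\sigma_2(\hat p,\hat\tau)=\mathcal Z(\hat p;\hat\tau)\cup\{A_*(\hat p;\hat\tau)\}$ if $\hat p\in(p_-(1/b),p_+(1/b))$, and it equals $\mathcal Z(\hat p;\hat\tau)$ otherwise. Multiplying by $1/\tau$ and using \eqref{relation of stability set} yields the dichotomy in the statement. It remains only to identify $\frac1\tau\mathcal Z(\hat p;\hat\tau)$ with $\hat{\mathcal Z}$. By definition of $\hat\alpha_j,\hat\beta_j$, the set $\hat{\mathcal Z}$ is exactly $\frac1\tau\{\beta_1,\beta_2,\beta_3,\pm\alpha_1,\pm\alpha_2,\pm\alpha_3\}$ evaluated at $(\hat p,\hat\tau)$; the relabelling is merely a permutation and does not change the set. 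As a consistency check, note that $\frac1\tau\mathcal Z(\hat p;\hat\tau)$ is indeed the zero set of $Q(\cdot;p,\tau)$. This follows because $\Delta_k$ is transported as $\Delta_1(A;p,\tau)=\Delta_2(\tau A;\hat p,\hat\tau)$ and the Case 2 analysis characterises zeros of $Q$ through the monodromy. One can also see it directly: the formula \eqref{QA} scales homogeneously under $A\mapsto\tau A$, $\wp(2p)\mapsto\tau^{-2}\wp(2\hat p)$, $\wp'(2p)\mapsto\tau^{-3}\wp'(2\hat p)$ and $g_2\mapsto\tau^{-4}g_2$.

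The one step that needs care is the derivation of \eqref{relation of stability set} itself, namely $\sigma_1(p;\tau)=\frac1\tau\sigma_2(\hat p,\hat\tau)$, which relies on the monodromy swap. The map $\hat y(u)=y(\tau u)$ exchanges the $1$-monodromy of $y$ with the $\tau$-monodromy, and sends the $\hat\tau$-monodromy to the inverse of the $1$-monodromy of $y$. Since $\operatorname{tr}N^{-1}=\operatorname{tr}N$ for $N\in SL(2,\C)$, the discriminants match: $\Delta_1(A;p,\tau)=\Delta_2(\hat A;\hat p,\hat\tau)$. Preimages of $[-1,1]$ then correspond under $A\mapsto\hat A=\tau A$, which gives $A\in\sigma_1(p;\tau)$ if and only if $\tau A\in\sigma_2(\hat p;\hat\tau)$. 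The expected obstacle is purely bookkeeping, namely making sure the apparent-condition constant $B$ transforms correctly. This is the computation $\tau^2 I(\tau u;p,A,\tau)=I(u;\hat p,\hat A,\hat\tau)$ already displayed, so the theorem follows.
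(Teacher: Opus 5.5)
Your proposal is correct and follows the paper's route exactly. The modular change $\hat\tau=-1/\tau$, $\hat p=p/\tau$, $\hat A=\tau A$ gives $\tau^2 I(\tau u;p,A,\tau)=I(u;\hat p,\hat A,\hat\tau)$, hence \eqref{relation of stability set}, and then Theorem \ref{conditional intersection} together with Theorem \ref{endpoint positivity theorem} (i.e.\ Corollary \ref{corollary final 0 1/4}) applied on the rectangular torus $\hat\tau=i/b$ yields the statement. Your extra check that $Q(A;p,\tau)=\tau^{-6}Q(\tau A;\hat p,\hat\tau)$, so that $\hat{\mathcal Z}$ is exactly the zero set of $Q(\cdot;p,\tau)$, is not needed here, but it is the fact one needs afterwards to count solution families in Theorem \ref{main result general}.
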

Now we are ready to establish Theorem \ref{main result general}.
\begin{proof}[Proof of Theorem \ref{main result general}]
	By Corollary \ref{corollary final 0 1/4} and Theorem \ref{conditional intersection vertical}, we have already figured out the existence of solutions to the curvature equation \eqref{main problem} when $p\in(0,1/4)\cup(0,\tau/4)$. The other cases follow by the translation and reflection laws \eqref{sigma -- 1st}, \eqref{sigma --- 2nd}, \eqref{sigma final 1}, and \eqref{sigma final 2}.
\end{proof}

\section*{Acknowledgements} Z. Chen is supported by National Key R\&D Program of China (No. 2023YFA1010002) and NSFC (No. 12222109).

\section*{Declaration of generative AI in the manuscript preparation process}
During the preparation of this work the authors used OpenAI model as an auxiliary tool. After using this tool, the authors reviewed and edited the content as needed and take full responsibility for the content of the published article.

	\bibliographystyle{amsplain}

\end{document}